\newtheorem{Theorem}{Theorem}[section]
\newtheorem{theorem}[Theorem]{Theorem}
\newtheorem{proposition}[Theorem]{Proposition}
\newtheorem{corollary}[Theorem]{Corollary}
\newtheorem{lemma}[Theorem]{Lemma}
\newtheorem{Fact}[Theorem]{Fact}
\newtheorem{fact}[Theorem]{Fact}
\newtheorem{remark/def}[Theorem]{Remark/Definition}
\newtheorem{claim}[Theorem]{Claim}
\theoremstyle{definition}
\newtheorem{remark}[Theorem]{Remark}
\newtheorem{definition}[Theorem]{Definition}
\newtheorem{notation}[Theorem]{Notation}
\newtheorem{note}[Theorem]{Note}
\newtheorem{Question}[Theorem]{Question}
\newtheorem{question}[Theorem]{Question}
\newtheorem{def/rem}[Theorem]{Definition/Remark}
\newtheorem{not/rem}[Theorem]{Notation/Remark}
\newsavebox{\indbin}
\savebox{\indbin}{\begin{picture}(0,0)
\newlength{\gnu}
\settowidth{\gnu}{$\smile$} \setlength{\unitlength}{.5\gnu}
\put(-1,-.65){$\smile$} \put(-.25,.1){$|$}
\end{picture}}
\def \indo {\mathop{\smile \hskip -0.9em ^| \ }}
\newcommand{\be}{\begin{enumerate}}
\newcommand{\bd}{\begin{defn}}
\newcommand{\bt}{\begin{theorem}}
\newcommand{\bl}{\begin{lemma}}
\newcommand{\ee}{\end{enumerate}}
\newcommand{\ed}{\end{defn}}
\newcommand{\et}{\end{theorem}}
\newcommand{\el}{\end{lemma}}
\newcommand{\sm}{\setminus}
\newcommand{\CP}{{\mathcal P}}
\newcommand{\CA}{{\mathcal A}}
\newcommand{\CB}{{\mathcal B}}
\newcommand{\CC}{{\mathcal C}}
\newcommand{\CE}{{\mathcal E}}
\newcommand{\CL}{{\mathcal L}}
\newcommand{\CM}{{\mathcal M}}
\newcommand{\CU}{{\mathcal U}}
\newcommand{\BN}{{\mathbb N}}
\newcommand{\BZ}{{\mathbb Z}}
\newcommand{\BQ}{{\mathbb Q}}
\newcommand{\BR}{{\mathbb R}}
\newcommand{\dom}{\mbox{dom}}
\newcommand{\id}{\operatorname{id}}
\newcommand{\Aut}{\operatorname{Aut}}
\newcommand{\aut}{\operatorname{Aut}}
\newcommand{\Autf}{\operatorname{Autf}}
\newcommand{\autf}{\operatorname{Autf}}
\newcommand{\LL}{\operatorname{L}}
\newcommand{\gal}{\operatorname{Gal}}
\newcommand{\gall}{\gal_{\LL}}
\newcommand*{\upSmallFrown}{\mathbin{\raisebox{0.9ex}{$\smallfrown$}}}
\newcommand{\sdist}{\operatorname{Sd}}
\newcommand{\Th}{\operatorname{Th}}
\def\fix{\operatorname{fix}}
\def\res{\operatorname{res}}
\def\eq{\operatorname{eq}}
\def\cl{\operatorname{cl}}
\def\dcl{\operatorname{dcl}}
\def\dom{\operatorname{dom}}
\def\acl{\operatorname{acl}}
\def\ker{\operatorname{Ker}}
\def\Im{\operatorname{Im}}
\def\tp{\operatorname{tp}}
\def\stp{\operatorname{stp}}
\def\lstp{\operatorname{Lstp}}
\def\Lstp{\operatorname{Lstp}}
\def\x{\bar{x}}
\def\y{\bar{y}}
\def\z{\bar{z}}
\def\a{\bar{a}}
\def\b{\bar{b}}
\def\p{\bar{p}}
\def\raw{\rightarrow}
\def\sm{\setminus}
\def\mor{\operatorname{Mor}}
\def\supp{\operatorname{supp}}
\def\Bd{\partial}
\title{The Lascar groups and the first homology groups  in model theory}
\author{Jan Dobrowolski, Byunghan Kim, and Junguk Lee}
\address{Department of Mathematics\\ Yonsei University\\
50 Yonsei-Ro, Seodaemun-Gu\\
Seoul 120-749, Korea}
\email{dzas87@gmail.com}
\email{bkim@yonsei.ac.kr}
\email{ljw@yonsei.ac.kr}
\thanks{All authors were supported by Samsung Science Technology Foundation under Project Number
SSTF-BA1301-03.
The third author was also supported by NRF of Korea grant 2013R1A1A2073702 and the Yonsei University Research Fund(Post Doc. Researcher Supporting Program) of 2016(project no.:2016-12-0004)}
\begin{document}

\begin{abstract}  Let $p$ be a strong type of an algebraically closed tuple over $B=\acl^{\eq}(B)$ in any theory $T$.
Depending on a ternary relation $\indo^*$ satisfying some basic axioms (there is at least one such, namely the trivial independence in $T$),    the first homology
group  $H^*_1(p)$   can be introduced, similarly to  \cite{GKK1}.

We show that there is  a canonical surjective homomorphism from the Lascar group over $B$ to $H^*_1(p)$. We also notice that the map factors naturally via a surjection from the `relativised' Lascar group of the type (which we define in analogy with
the Lascar group of the theory) onto the homology group, and we give an explicit description of its kernel.
Due to this characterization, it follows that the first homology group of $p$ is independent from the choice of $\indo^*$, and can be written simply as
$H_1(p)$.

As consequences, in any $T$,  we show that
$|H_1(p)|\geq 2^{\aleph_0}$ unless $H_1(p)$ is trivial, and
 we give a criterion for the equality of stp and Lstp of algebraically closed tuples
using the notions of the first homology group and a relativised Lascar group.

We also argue how any abelian connected compact group can appear as the first homology group of the type of a model. 
\end{abstract}

\maketitle

In this paper we study the first homology group of a strong type in any theory. 

Originally,
in \cite{GKK1} and \cite{GKK}, a homology theory only for rosy theories is developed. Namely, given a strong type $p$ in a rosy theory $T$,
the notion of the $n$th homology group $H_n(p)$ depending on thorn-forking independence relation  is introduced.  Although the homology groups are  defined analogously
as in singular homology theory in algebraic topology, the $(n+1)$th homology group   for $n>0$ in the rosy theory context  has to do with the $n$th homology group in algebraic topology. For example as in \cite{GKK1},
$H_2(p)$ in stable theories has to do with  the  fundamental group in topology. This implies that, already in rosy theories, $H_1(p)$ is detecting somewhat endemic properties of $p$ existing only in model theory context.

Indeed, in every known rosy example, $H_n(p)$  for $n\geq 2$ is a profinite abelian group. In \cite{GKK2}, it is proved to be so  when $T$ is stable under a canonical condition, and conversely, every profinite abelian group can arise in this form.
On the other hand,  we show in this paper that the first homology groups appear to have distinct features as follows.

Let $p=\tp(a/B)$ be a  strong type over $B=\acl^{\eq}(B)$ in any theory $T$.
Fix a ternary invariant independence relation $\indo^*$ among small sets satisfying finite character, normality, symmetry, transitivity and extension. (There is at least one such relation, by putting $A\indo_CD$ for any sets $A,C,D$.) Then we can analogously define $H^*_1(p)$ depending on $\indo^*$, (which of course is the same as $H_1(p)$ when
$\indo^*$ is thorn-independence in rosy $T$). In this note,   a  canonical epimorphism from the Lascar group over $B$ of $T$ to $H^*_1(p)$ is constructed.
Indeed, we also introduce the notion of the relativised Lascar group of a type which  is proved to be independent from the  choice of the monster model of $T$, and
the homomorphism factors through a surjection from the relativised Lascar group of $\p=\tp(\acl(aB)/B)$ onto $H^*_1(p)$. Moreover,
we can identify its kernel. Roughly,
$H^*_1(p)$ has to do with the abelianization of the relativised Lascar group of $\p$.
More precisely, $H^*_1(p)=G/K$, where $G$ is the  group of automorphisms of the realization set of $\p$, and $K$ is the normal subgroup of $G$ fixing each orbit
under the action of the derived subgroup of $G$. Surprisingly, this conclusion is independent from the choice of $\indo^*$ satisfying the axioms.\footnote{However it is not clear whether the same feature can happen for $n$th homology groups of types for $n>1$.}
 Hence, we can write the first homology group simply as  $H_1(p)$, which makes sense in any theory.

Consequently, we show that $|H_1(p)|\geq 2^{\aleph_0}$ unless $H_1(p)$ is trivial,
and exhibit a non-profinite  example in a rosy theory. In conclusion, we find a
criterion for the coincidence of notions of strong types and Lascar types of algebraically closed tuples
in any theory,  in terms of the triviality of the first homology groups and the abelianness of a relativised Lascar group (Corollary \ref{equality_stp_Lstp}).
It seems reasonable to ask whether this criterion can be applied in verifying or refuting stp$\equiv$Lstp in simple theories.

\medskip

In Section 1, we introduce/recall basic definitions of the first homology group of  a strong type for any theory. In Section 2, as mentioned above,
 we construct a surjective homomorphism from the  Lascar group to the first homology group. In Section 3,
we introduce the aforementioned concept of   relativized Lascar groups,  and
in Section 4, we prove the characterization theorem (Theorem \ref{kernel_derived}) of the first homology group and give a criterion for Lstp$\equiv$stp.
We also argue that
   the size of the first homology group of a strong type is either $1$ or $\geq 2^{\aleph_0}$ (in Theorem \ref{cardinality_h1}, and a more detailed explanation is given in Section 6).
   In Section 5, we state that any connected compact abelian group can appear as the first homology group of the type of a model (Theorem \ref{ab_conn_gp_h1}), which follows from a result by Bouscaren, Lascar, Pillay, and Ziegler. 
    We also give a more precise  example of a type in a  rosy theory having a non-profinite  first homology group. 
   We point out here that this paper is a result of merging two notes. The first one, single-authored by Junguk Lee, covered Section 2, Theorem \ref{cardinality_h1} in Section 4, and Section 5.2, and the second note, jointly written by the three authors,  consisted of Section 1,3, 4 and Section 5.1.


\section{Introduction}


Throughout this paper, we work in a large saturated model $\CM(=\CM^{\eq})$ of a complete theory
$T$, and we use standard notations.
For example, unless stated  otherwise, $a,b,\ldots,$ and  $A, B,\ldots$  are small but possibly infinite
tuples and sets from
$\CM$, respectively, and  $a\equiv_Ab$, $a\equiv^s_Ab$,  $a\equiv^L_Ab$ mean
$\tp(a/A)=\tp(b/A)$, $\stp(a/A)=\stp(b/A)$, $\lstp(a/A)=\lstp(b/A)$,
respectively.
  For the general theory of   model theory, of the Lascar groups, and of rosy theories, we refer to \cite{K},
  \cite{Z}, and \cite{EO}, respectively. For the homology theory in model theory, see \cite{GKK,GKK1}.
  A particular case of the first homology group with respect to thorn-forking in rosy theories is studied in
  \cite{KKL},\cite{KL}. The main difference of the first homology groups introduced in this section from those in the references is
  that the groups are defined with respect to a fixed independence notion in an arbitrary theory as follows, not necessarily thorn-forking/Shelah-forking in rosy/simple theories. 
  However, as the reader will see, all the arguments from the rosy theory
  context can follow in the general context.


\medskip

For the rest of this section (and also for Section \ref{characterization}), we fix a ternary
automorphism-invariant relation $\indo^*$ between
small sets of $\CM$ satisfying
\begin{itemize}
\item
finite  character: for any sets $A,B,C$, we have $A\indo^*_CB$ iff $a\indo^*_Cb$ for any finite tuples $a\in A$ and $b\in B$;

\item normality: for any sets $A$, $B$ and $C$, if $A\indo^* _C B$, then $A\indo^*_C \acl(BC)$;

\item
symmetry: for any sets $A,B,C$, we have $A\indo^*_CB$ iff $B\indo^*_CA$;
\item
transitivity: $A\indo^*_BD$ iff $A\indo^*_BC$ and $A\indo^*_CD$,  for any sets $A$ and
$B\subseteq	 C\subseteq D$;
\item
 extension: for any sets $A$ and $B\subseteq C$, there is $A'\equiv_BA$ such that
 $A'\indo^*_BC$.
\end{itemize}

Throughout this paper we call the above axioms {\bf the basic 5 axioms}.
We say that $A$ is $*$-independent from $B$ over $C$ if $A\indo^*_C B$. Notice that there is at least one such relation for any theory, namely,
the {\em trivial independence relation}: For any sets $A,B,C$, put $A\indo^*_BC$. Of course there is
a non-trivial such relation when $T$ is simple or rosy, given by forking or thorn-forking, respectively.

Now, we also fix a strong type $p$ of possibly infinite arity over $B=\acl(B)$. We shall define the first homology group of $p$ with respect to $\indo^*$, analogously to that in the references.
 Hence we begin by  recalling  some notations from the references.

\begin{notation}
Let $s$ be an arbitrary finite set of natural numbers. Given any subset
$X\subseteq \CP(s)$, we may view $X$ as a category where for any
$u, v \in X$, $\mor(u,v)$ consists of a single morphism 
$\iota_{u,v}$ if $u\subseteq v$, and $\mor(u,v)=\emptyset$ otherwise. 
If $f\colon X \raw \CC$ is any functor into some category $\CC$, then for any $u, v\in X$ with $u \subseteq v$, we let $f^u_v$ denote the morphism
$f(\iota_{u,v})\in \mor_{\CC}(f(u),f(v))$.
We shall call $X\subseteq \CP(s)$ a \emph{primitive category} if $X$ is non-empty and  \emph{downward closed}; i.e.,  for any $u, v\in \CP(s)$,  if $u\subseteq v$ and $v\in X$ then $u\in X$. (Note that all primitive categories have  the empty set $\emptyset\subset \omega$ as an object.)

We use now $\CC_B$ to denote the category  whose objects are all the small subsets of $\CM$ containing $B$, and whose morphisms are elementary maps over $B$.
For a functor $f:X\to \CC_B$ and objects $u\subseteq v$ of $X$, $f^u_v(u)$ denotes the set $f^u_v(f(u))(\subseteq f(v))$.
\end{notation}

\begin{definition}\label{p-*-functors}
By a \emph{$*$-independent functor in $p$}, we mean a functor $f$ from some primitive category $X$ into $\CC_B$ satisfying the following:

\be
\item If $\{ i \}\subset \omega$ is an object in $X$, then $f(\{ i \})$ is of the form $\acl(Cb)$ where  $b\models p$, $C=\acl(C)=f^{\emptyset}_{\{ i \}}(\emptyset)\supseteq B$,  and $b\indo^*_B C$.

\smallskip

\item Whenever $u(\neq \emptyset)\subset \omega$ is an object in $X$, we have
\[ f( u) = \acl \left( \bigcup_{i\in u} f^{\{ i \}}_u(\{ i \}) \right)
\]
and $\{f^{\{i\}}_u(\{i\})|\ i\in u \}$ is $*$-independent over $f^\emptyset_u(\emptyset)$.
\ee
 We let $\CA^*_p$  denote the family of all $*$-independent functors in $p$.

 A $*$-independent functor $f$  is called a {\em $*$-independent $n$-simplex} in $p$  if $f(\emptyset)=B$ and $\dom(f)=\CP(s)$ with $s\subset \omega$ and  $|s|=n+1$.  We call $s$ the {\em support} of $f$ and denote it by $\supp(f)$.
\end{definition}

In the rest we may call a  $*$-independent $n$-simplex in $p$ just  an {\em $n$-simplex} of $p$, as far as no  confusion arises.
We are ready to define the first homology group $H^*_1(p)$ of $p$ depending on our choice of the independence relation $\indo^*$.

\begin{definition} Let $n\geq 0$.
 We define:
\begin{align*}
&S_n(\CA^*_p):= \{\, f\in \CA^*_p \mid \mbox{$f$ is an $n$-simplex  of $p$}\, \}\\
&C_n(\CA^*_p): =  \mbox{the free abelian group generated by $S_n(\CA^*_p)$.}
\end{align*}
An element of $C_n(\CA^*_p)$ is called an \emph{$n$-chain} of $p$.
The support of a chain
$c$, denoted by $\supp(c)$, is the union of the supports of all the simplices that appear in $c$ with a non-zero
coefficient.
Now for $n\geq 1$ and  each $i=0, \ldots,  n$, we  define a group homomorphism
\[ \Bd^i_n\colon C_n(\CA^*_p) \raw C_{n-1}(\CA^*_p)
\]
by  putting, for any $n$-simplex   $f\colon \CP(s) \raw \CC$ in $S_n(\CA^*_p)$ where  $s = \{ s_0 <\cdots < s_n \}\subset \omega$,
\[ \Bd_n^i(f):= f\restriction  \CP(s\setminus\{s_i\})
\]
and then extending linearly to all $n$-chains in $C_n(\CA^*_p)$. Then we define  the \emph{boundary map}
\[ \Bd_n\colon C_n(\CA^*_p)\raw C_{n-1}(\CA^*_p)
\]
by
\[ \Bd_n(c):=\sum_{0\leq i\leq n}(-1)^i \Bd_n^i(c).
\]
We shall often refer to $\Bd_n(c)$ as the \emph{boundary of $c$}. Next, we define:
\begin{align*}
&Z_n(\CA^*_p):= \ker \, \Bd_n\\
& B_n(\CA^*_p):= \Im \, \Bd_{n+1}.
\end{align*}

\noindent The elements of $Z_n(\CA^*_p)$ and $B_n(\CA^*_p)$ are called \emph{$n$-cycles} and \emph{$n$-boundaries} in $p$, respectively.
It is straightforward to check that
$ \Bd_{n}\circ \Bd_{n+1} =0.$
Hence we can now define the group
\[ H^*_n(p):= Z_n(\CA^*_p)/B_n(\CA^*_p)
\]
called the \emph{$n$}th \emph{$*$-homology group} of $p$.
\end{definition}

\begin{notation}
\be
\item
For $c\in Z_n(\CA^*_p)$, $[c]$ denotes the homology class of $c$ in $H^*_n(p)$.
\item When $n$ is clear from the context, we shall often omit $n$ in $\Bd^i_n$ and in $\Bd_n$, writing simply as  $\Bd^i$ and $\Bd$.
\ee
\end{notation}

\begin{definition}
A $1$-chain  $c\in C_1(\CA^*_p)$ is called a \emph{$1$-$*$-shell} (or just a $1$-shell) in $p$ if it is of the form
\[c =f_0-f_1+f_2
\]
where $f_i$'s are $1$-simplices of $p$ satisfying
\[ \Bd^i f_j = \Bd^{j-1} f_i \quad \mbox{whenever $0\leq i < j \leq 2$.}
\]
Hence, for $\supp(c)=\{n_0<n_1<n_2\}$ and $ k\in \{0,1, 2\}$, it follows that
$$\supp(f_k)=\supp(c)\smallsetminus \{n_k\}.$$
\end{definition}

Notice that the boundary of any $2$-simplex is a $1$-shell.

\begin{not/rem}\label{p and bar p}
Let $p(x)=\tp(a/B)$ be fixed, and let $\p(\x)=\tp(\acl(aB)/B)$ (with some enumeration of $\acl(aB)$). Obviously $\p(\x)$ only depends on $p$ (not on its
realizations).
By the definitions of the $*$-independent functors and the first homology group, $H_1^*(p)$ and $H_1^*(\p)$ are identical.

If $c$ is a $1$-shell, then in $H^*_1(p)$,  we shall see in Remark \ref{summary_H1_representation} that
$[-c]=[c']$ where $c'$ is another $1$-shell with $\supp(c')=\supp(c)$.

We note now that in \cite{GKK}, the notion of  an {\em amenable} collection of functors into
a category is introduced, and due to the 5 axioms that $\indo^*$ satisfies, it is clear that $\CA^*_p$ forms such a collection of functors into $\CC_B$.
Therefore the following corresponding fact holds. 
\end{not/rem}

\begin{fact}\label{H1 in amenable family} (\cite{GKK1} or \cite{GKK})
\[ H^*_1(p) = \{ [c] \mid c \mbox{ is a } \mbox{$1$-$*$-shell with}\ \supp(c) = \{ 0, 1, 2 \} \, \}.
\]
\end{fact}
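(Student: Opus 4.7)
The plan is to establish the fact in two reduction steps: first, every element of $H_1^*(p)$ is represented by some $1$-$*$-shell; second, any such shell is homologous to one whose support is exactly $\{0,1,2\}$. The underlying tool for both steps is the amenability of $\CA_p^*$ noted in Remark \ref{p and bar p}, which ultimately rests on the basic 5 axioms of $\indo^*$.

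For the first step, let $z\in Z_1(\CA_p^*)$ be a $1$-cycle, written as a formal $\BZ$-linear combination $z=\sum_i\epsilon_i f_i$ of $1$-simplices with $\epsilon_i\in\{\pm 1\}$. I would induct on the number of simplices appearing in $z$. The equation $\Bd z=0$ forces cancellation among $0$-faces, which locates two of the $f_i$ sharing a common $0$-face. Using extension, normality, and symmetry of $\indo^*$, realize a new vertex $*$-independent from the union of the existing vertices over the common base, and assemble the two given $1$-simplices together with a ``completing'' $1$-simplex into a $2$-simplex $g\in S_2(\CA_p^*)$. Then $\Bd g$ is a $1$-shell, and $z-\Bd g$ is a $1$-cycle with strictly fewer simplices, homologous to $z$. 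Iterating yields a $1$-shell representative of $[z]$.

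For the second step, given a $1$-shell $c$ with support $\{n_0<n_1<n_2\}$, the order-preserving bijection $\sigma\colon\{0,1,2\}\to\{n_0,n_1,n_2\}$ yields, by relabeling the indexing sets, a new $1$-shell $c_\sigma$ with support $\{0,1,2\}$; relabeling preserves membership in $\CA_p^*$ since the basic 5 axioms depend only on the sets involved, not on their indices in $\omega$. To see $[c]=[c_\sigma]$ in $H_1^*(p)$, I would build a $2$-chain whose boundary is $c-c_\sigma$: for each matching pair of faces of $c$ and $c_\sigma$, construct a $2$-simplex interpolating between them by invoking amenability once more, then sum these $2$-simplices with appropriate signs so that the intermediate faces cancel.

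The principal obstacle is the amenability step used in both reductions, namely producing the $2$-simplex $g$ extending a compatible configuration of lower-dimensional simplices. This amounts to realizing in $\CM$ a tuple satisfying $p$ that is $*$-independent over the common base from the union of the existing vertex data while simultaneously matching the prescribed elementary embedding on the overlap. Such a realization is supplied by first applying the extension axiom to obtain a $*$-independent extension and then verifying the top-level independence condition required in Definition \ref{p-*-functors}(2) via normality, symmetry, and transitivity. Once this amenability lemma is in hand, the remainder is routine homological bookkeeping, paralleling the analogous argument in \cite{GKK1,GKK} for amenable collections of functors.
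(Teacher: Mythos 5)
The paper does not prove this statement; it is quoted from \cite{GKK1,GKK}, where it is established for any amenable collection of functors, and Remark \ref{p and bar p} records precisely that $\CA^*_p$ is such a collection. Your two-step sketch (first reduce a $1$-cycle to a homologous $1$-shell by repeatedly killing an adjacent pair of $1$-simplices via a $2$-simplex supplied by the extension axiom, then transport the support to $\{0,1,2\}$ by interpolating $2$-chains) reconstructs exactly that cited argument, so this is the same route rather than a new one.

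Where your sketch is genuinely incomplete is the terminal case of the induction in the first step. Decreasing the number of $1$-simplices by one per step does not by itself land you on a $1$-shell: the process can terminate at a two-term $1$-cycle $f_0 - f_1$ with $\supp(f_0)=\supp(f_1)$ and $\Bd^k f_0 = \Bd^k f_1$ for $k=0,1$ yet $f_0 \neq f_1$ as functors (the top values $f_k(\supp(f_k))$ or the transition maps $f_k^{\{i\}}_{\supp(f_k)}$ may differ). Such a cycle is not a $1$-shell, and you must argue separately that it bounds --- take, again by amenability, $2$-simplices $g_0, g_1$ with $\Bd^2 g_k = f_k$, $\Bd^0 g_0 = \Bd^0 g_1$, $\Bd^1 g_0 = \Bd^1 g_1$, so that $\Bd(g_0 - g_1) = f_0 - f_1$, and then $[z]=0$ is represented by any degenerate $1$-shell. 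Relatedly, the step ``locate two $f_i$ sharing a common $0$-face and fill with a $2$-simplex'' implicitly requires $|\supp(f_i)\cup\supp(f_j)|=3$ so that a $2$-simplex with legal support exists; the equal-support case is precisely the two-term configuration above and must be branched off before you can attempt the fill. A three-term terminal cycle also needs a short check that the sign pattern forced by $\Bd z = 0$ and the support constraints yield the shell identities $\Bd^i f_j = \Bd^{j-1} f_i$ after relabeling. Spelling out these cases closes the argument and recovers the cited result.
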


So if any $1$-shell is the boundary of  some $2$-chain then $H^*_1(p)=0$.\footnote{Notice that in Definition \ref{p-*-functors}, we take algebraic closures, rather that bounded closures. Hence even  when $\indo^*$ is nonforking in simple $T$, it is not known whether $H^*_1(p)$ is trivial. We put this as an open question
in Question \ref{simpleh_1}.}

\begin{remark}\label{p is bar p}
The following Fact \ref{h1=0} directly comes from   \cite[Theorem 2.4]{KKL} (and above \ref{p and bar p}), since
the proof of the theorem only uses the fact that thorn-independence in any rosy theory satisfies the basic 5
axioms. But we point out that  corrections should be made  in the theorem and other
results in \cite{KKL}. Namely, $p(x)$ there should be changed to $\bar p(\bar x)$ since a vertex of a simplex in $p$ is
an algebraically closure over $B$ of a realization of $p$. In fact it is not clear whether $p(x)$ 
being a Lascar type implies that $\p(\x)$ is also a Lascar type
(the converse always holds though),   unless $T$ is G-compact over $B$:
Let $T$ be G-compact over $B$, and let $p$ be a Lascar type; i.e., for any $a,b \models p$, 
we have $a \equiv^L_B b$. We claim that $\p$ is a Lascar type, too.
Since $T$ is G-compact over $B$, equality of $\Lstp$ over $B$ is $B$-type-definable and a conjunction of those of   finite arities. 
Thus by compactness  for $a\models p $ and finite $c\in \acl(aB)$, it suffices to show that $q(x,y):=\tp(ac/B)$ is a Lascar type. Now since $p$ is a Lascar type, for any
$a'\models p$ there is $c'\in \acl(a'B)$ such that $ac\equiv^L_B a'c'$. Since there are only finitely many conjugates of $c'$ over $a'B$, it follows that there are at most
finitely many distinct Lascar types in $q$. This implies that equality of $\Lstp$  in $q$ is  relatively definable over $B$. But since $q$ is a strong type
over $B=\acl(B)$, there is only one Lascar class in $q$.
\end{remark}


\begin{fact}\label{h1=0}
Suppose that $\p(\x)$ is a Lascar type. Then $H_1^*(p)=0$.
\end{fact}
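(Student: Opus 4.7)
The plan is to follow \cite[Theorem 2.4]{KKL}, whose argument used only properties of thorn-independence that are captured by the basic $5$ axioms. By Fact \ref{H1 in amenable family}, it suffices to show every $1$-$*$-shell $c$ with $\supp(c)=\{0,1,2\}$ is a $1$-boundary. Writing $c=f_0-f_1+f_2$, the shell equations $\Bd^i f_j=\Bd^{j-1}f_i$ force a common triple of vertices $A_0,A_1,A_2$ with each $A_i=\acl(Ba_i)$, $a_i\models p$, and pairwise $*$-independent over $B$; here $f_j$ carries the two vertices $A_i$ with $i\ne j$.

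First, I would manufacture a genuine $2$-simplex. Iterating the extension axiom (together with invariance and symmetry) produces realizations $a_0^\star,a_1^\star,a_2^\star\models p$ such that $\{\acl(Ba_i^\star)\mid i<3\}$ is $*$-independent over $B$; these are the vertices of a $2$-simplex $g\in S_2(\CA^*_p)$, and $c^\star:=\Bd g$ is a $1$-shell of the same support. So it suffices to prove $[c]=[c^\star]$ in $H_1^*(p)$, as $[c^\star]=0$. The bridge between $c$ and $c^\star$ is the hypothesis that $\p$ is a Lascar type: each $A_i$ and $\acl(Ba_i^\star)$, suitably enumerated as realizations of $\p$, satisfy $A_i\equiv^L_B\acl(Ba_i^\star)$.

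I would then prove the following intermediate claim: if two $1$-shells of support $\{0,1,2\}$ have vertex triples agreeing in two coordinates with the two remaining vertices Lascar-equivalent over $B$, then their classes in $H_1^*(p)$ coincide. Applying this three times (once per coordinate) yields $[c]=[c^\star]$ and completes the argument. The claim itself is handled by induction on Lascar distance: since $\equiv^L_B$ is the transitive closure of ``lying in one common $B$-indiscernible sequence'', the base case concerns two vertices inside a single such sequence, where one uses indiscernibility together with the extension axiom to insert auxiliary realizations of $p$ and exhibit a $2$-chain whose boundary is the difference of the two shells. The main obstacle is precisely this base-case construction -- assembling the telescoping sum of $2$-simplex boundaries and verifying that each constituent is a valid $*$-independent functor in $p$ (checking in particular that the required full $*$-independence of fresh vertex triples is available via extension). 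This is the combinatorial core of \cite[Theorem 2.4]{KKL}, and it transfers verbatim to our setting because the construction there only invokes the basic $5$ axioms, which $\indo^*$ satisfies by assumption.
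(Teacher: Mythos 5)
Your high-level route coincides with the paper's: the paper proves Fact~\ref{h1=0} simply by citing \cite[Theorem 2.4]{KKL} applied to $\p$ (via Remark~\ref{p and bar p}) and observing that the argument there only invokes the five basic axioms, which $\indo^*$ satisfies. Since you invoke the same reference and the same transfer principle, your proposal matches the paper in spirit.

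The intermediate claim around which you organize your sketch is not well-posed, though, and this is worth flagging because it is not just a routine detail left to \cite{KKL}. You assert that two $1$-shells of support $\{0,1,2\}$ whose vertex triples agree in two coordinates, with the remaining pair Lascar-equivalent, must lie in the same homology class. But the class of a $1$-shell is \emph{not} a function of its vertex triple $(A_0,A_1,A_2)$: each edge $f_{ij}$ also carries the elementary maps $f^{\{i\}}_{\{i,j\}}$, $f^{\{j\}}_{\{i,j\}}$ into the apex, and it is the mismatch between the two maps emanating from a shared vertex --- recorded precisely by the endpoint pair of a representation (Definition~\ref{representation}, Theorem~\ref{endpt_class}) --- that governs $[s]$. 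In the rotated-circle example of Section~5.2 all vertex triples are conjugate while $H_1(p_0)\cong\BR/\BZ$, so even the degenerate case of your claim (identical vertex triples) already fails. A second, smaller issue: your intermediate triples $(A_0^\star,A_1,A_2)$, $(A_0^\star,A_1^\star,A_2)$ must be pairwise $*$-independent to support $1$-shells, but you arranged only joint independence of the $a_i^\star$ among themselves, not over the old vertices; an extra appeal to extension is needed there. The induction in \cite{KKL} (and what Fact~\ref{Lascar_zero} later records) is on the Lascar distance between the \emph{endpoints} of a representation, not on the vertices, precisely to avoid these problems.
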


For the rest of this paper, for notational simplicity,  we suppress $B$ to $\emptyset$ by naming it (and reuse $B$ to mean an arbitrary small set).
In particular, $\CC$ denotes $\CC_B$. {\bf We further suppose (until the end of Section 4)
that the fixed strong type $p$ is a type of an
algebraically closed set (by assuming $p=\p$) or that the algebraic closure of its realization is the same as its definable closure.} This process is necessary as pointed out in Remark \ref{p is bar p}, and will not affect in computing $H^*_1(p)$ due to \ref{p and bar p}.

\section{Lascar groups and the first homology groups}
In this section, we show that there is a canonical epimorphism from the Lascar group
$\gal_L(T)$ onto the first homology group $H_1^*(p)$ of $p$.

\subsection{Representations of 1-shells}

\begin{definition}\label{representation}
\be\item
We introduce some notation which will be used throughout. Let $f\colon \CP(s) \raw \CC$ be an $n$-simplex in $p$. For $u\subset s$ with $u = \{ i_0 <\ldots <i_k \}$, we shall write $f(u)=[a_0 \ldots a_k]_u$, where each $a_j\models p$ is 
an algebraically closed tuple as assumed before, if $f(u)=\acl(a_0\ldots a_k)$, and $\acl(a_j)=f^{ \{ i_j \} } _u (\{i_j\})$. So, $\{a_0,\ldots,a_k\}$ is $*$-independent.
Of course, if we write $f(u)\equiv [b_0 \ldots b_k]_u$, then it means that there is an automorphism sending $a_0\ldots a_k$ to $b_0\ldots b_k$. 
\item
Let $s=f_{12}-f_{02}+f_{01}$ be a $1$-$*$-shell in $p$ such that $\supp(f_{ij})=\{n_i,n_j\}$ with $n_i<n_j$ for $0\le i<j\le 2$. Clearly there is a quadruple $(a_0,a_1,a_2,a_3)$ of realizations of $p$ such that $f_{01}(\{n_0,n_1\})\equiv[a_0 a_1]_{\{n_0,n_1\}}$, $f_{12}(\{n_1,n_2\})\equiv[a_1 a_2]_{\{n_1,n_2\}}$, and $f_{02}(\{n_0,n_2\})\equiv[a_3 a_2]_{\{n_0,n_2\}}$. We call this quadruple a {\em  representation of $s$}. For  any such representation of $s$,  call $a_0$  an {\em  initial point}, $a_3$ a {\em terminal point},  and $(a_0,a_3)$ an {\em  endpoint pair} of the representation.
\ee
\end{definition}

In the next theorem, we will see that the endpoint pairs of representations determine the classes of $1$-shells in $H_1^*(p)$, and the group structure of $H_1^*(p)$ can be described by endpoint pairs.
\begin{theorem}\label{endpt_class}
Let $s_0$ and $s_1$ be $1$-shells with support $\{0,1,2\}$.
Suppose they have some representations with the same endpoint pair.
Then $s_0-s_1$ is a boundary of a $2$-chain, that is, $s_0$ and $s_1$ are in the same homology class in $H_1^*(p)$
\end{theorem}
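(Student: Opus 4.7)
The plan is to construct an explicit $2$-chain $c\in C_2(\CA^*_p)$ with $\Bd c = s_0-s_1$, in three stages. The first is \emph{normalisation}: since automorphisms of $\CM$ over $B$ act on $\CA^*_p$ and commute with $\Bd$, they preserve $H_1^*$-classes, so I replace $s_0$ and $s_1$ by representatives of their classes whose literal functorial data conforms to the given representations. After this, both shells have vertex-$0$ set equal to $V_0=\acl(a_0)=\acl(a_3)$, with distinguished initial label $a_0$ on the $f_{01}$-edge and terminal label $a_3$ on the $f_{02}$-edge.

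The second stage is a \emph{cone reduction}: using extension, I pick $c\models p$ with $c\indo^*\{a_0,a_1,a_2,a_3,b_1,b_2\}$ over $\emptyset$ and verify that the three candidate $2$-simplices on vertex triples $\{V_0,V_1,\acl(c)\}$, $\{V_0,V_2,\acl(c)\}$, $\{V_1,V_2,\acl(c)\}$ are valid members of $\CA^*_p$; the needed $*$-independences follow from combining those present in $s_0$ with the genericity of $c$ (for instance $a_0\indo^* a_2$ is deduced from the shell datum $a_3\indo^* a_2$ via normality and $\acl(a_0)=\acl(a_3)$, then symmetry and transitivity). Their alternating signed sum is a $2$-chain whose boundary is a $1$-shell $s_0^{\mathrm{cone}}$ differing from $s_0$ only in the morphism-label at vertex $0$ of the $\{0,2\}$-edge (now $a_0$ rather than $a_3$). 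The same cone applied to $s_1$ yields $s_1^{\mathrm{cone}}$. Both lie in $B_1(\CA^*_p)$, so in $H_1^*(p)$, $[s_0]=[s_0-s_0^{\mathrm{cone}}]$ and $[s_1]=[s_1-s_1^{\mathrm{cone}}]$; the residual $1$-cycles are supported on the $\{0,2\}$-edge and encode exactly the endpoint-pair information.

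The third stage is \emph{bridging}: both residual cycles record the same endpoint pair $(a_0,a_3)$ and differ only in the position-$2$ data ($a_2$ versus $b_2$). I pick a further generic $d\models p$ with $d\indo^*\{a_0,a_1,a_2,a_3,b_1,b_2,c\}$ and build a chain of $2$-simplices linking $\acl(a_2)$ and $\acl(b_2)$ through $\acl(d)$---for instance $2$-simplices on $\{V_0,V_2,\acl(d)\}$ and $\{V_0,\acl(b_2),\acl(d)\}$, together with analogues where the morphism at vertex $0$ carries label $a_3$. A suitable signed combination has boundary $(s_0-s_0^{\mathrm{cone}})-(s_1-s_1^{\mathrm{cone}})$, with the edges meeting $\acl(d)$ cancelling pairwise, and combining this with the previous step exhibits $s_0-s_1$ as a $2$-boundary. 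The main obstacle is the bookkeeping in this bridging step: the triple $\{a_0,a_2,b_2\}$ need not be $*$-independent, so the comparison must be routed through $d$, and verifying the $*$-independences for each candidate $2$-simplex together with tracking the $(-1)^i$ signs in $\Bd=\sum(-1)^i\Bd^i$ so that the unwanted boundary contributions cancel exactly is the technical heart of the argument.
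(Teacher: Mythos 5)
Your three-stage plan is recognisably the same strategy as the paper's proof---cone both shells over a common generic apex, reduce to a residual $1$-cycle, then bridge the residual---but the middle stage misidentifies what the cone actually produces, and that error propagates into the bridging stage.

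Concretely, if $T_{01},T_{12},T_{02}$ are the three $2$-simplices of your cone over $\acl(c)$, with $T_{ij}\restriction\CP(\{i,j\})=f_{ij}$ and the apex placed at index $3$, then after arranging the $\{1,3\}$- and $\{2,3\}$-faces to match one computes
$\Bd(T_{01}+T_{12}-T_{02})=s_0-\bigl(T_{01}\restriction\CP(\{0,3\})-T_{02}\restriction\CP(\{0,3\})\bigr)$.
So the residual $1$-cycle is supported on the \emph{apex edge} $\{0,3\}$, not on the $\{0,2\}$-edge, and it records the mismatch between the two vertex-$0$ transition maps coming from $f_{01}$ and $f_{02}$ (the edge set $T_{01}(\{0,3\})$ equals $\acl(a_0 c)$ while $T_{02}(\{0,3\})$ equals $\acl(a_3 c)$, but the delicate point---flagged explicitly in the paper---is that even when these sets coincide, $T_{01}\restriction\CP(\{0,3\})$ and $T_{02}\restriction\CP(\{0,3\})$ differ as functors through $f^{\emptyset}_{\{0\}}$). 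Your description of the residual as ``differing only in the position-$2$ data ($a_2$ versus $b_2$)'' is therefore off, and as a result your bridging stage aims at the wrong target: $a_2$ and $b_2$ simply drop out of the picture after the cone and need no bridging. Also, the line ``$[s_0]=[s_0-s_0^{\mathrm{cone}}]$'' cannot be right as written, since $s_0-s_0^{\mathrm{cone}}$ is by construction a boundary and so has class $0$; you presumably meant $[s_0]=[s_0^{\mathrm{cone}}]$.

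The deeper gap is that you never actually use the hypothesis that $s_0$ and $s_1$ have representations with the same endpoint pair. In the paper's bridging step this hypothesis is what licenses the choice, via extension, of $c,c'\models p$ with $c\indo^* ab$, $c'\indo^* a'b$, and $ca\equiv c'a'$; this last elementary equivalence is exactly where ``same endpoint pair'' enters, and it is what makes the four bridging $2$-simplices $\hat f_k,\hat g_k$ fit together so that $(f_1-g_1)-(f_0-g_0)$ becomes a $2$-boundary. Without this ingredient, the residuals coming from coning $s_0$ and coning $s_1$ need not match, and the proof does not close. To repair your argument, redirect stage three: the residuals to bridge are the $\{0,\text{apex}\}$-edges carrying the $a_0$- versus $a_3$-transition, and the bridge must be built from a pair $c\equiv c'$ over $a_0\equiv a_3$ (in the same type), which is precisely where the endpoint-pair hypothesis does its work.
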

\begin{proof}
Let $s_k:=f_{12}^k-f_{02}^k+f_{01}^k$ for $k=0,1$, where for each $1$-shell $f_{ij}^k$ in $p$, $\supp(f_{ij}^k)=\{i,j\}$. Suppose $s_0$ and
$s_1$ have representations $(a,b_0,c_0,a')$ and $(a,b_1,c_1,a')$, respectively. Take
$b\models p$ such that $b\indo^* ab_0b_1c_0c_1a'$. Then,
there is a $2$-chain $\alpha=(a^0_{01}+a^0_{12}-a^0_{02})-(a^1_{01}+a^1_{12}-a^1_{02})$,
where for each $k=0,1$ and $0\le i<j\le 2$, $a^k_{ij}$ is a  $2$-simplex satisfying the following:
For $k=0,1$,

\be
	\item $\supp(a_{ij}^k)=\{i,j,3\}$;
	\item $a^k_{ij}\restriction \CP(\{i,j\})=f^k_{ij}$; and
	\item $a^k_{01}(\{0,1,3\})=[a b_k b]_{\{0,1,3\}}$,
	$a^k_{12}(\{1,2,3\})=[b_k c_k b]_{\{1,2,3\}}$,\\ $a^k_{02}(\{0,2,3\})=[a'c_k b]_{\{0,2,3\}}$,
\ee

\noindent and $\Bd(a^k_{01}+a^k_{12}-a^k_{02})=s_k-(f_k-g_k)$, where
$f_k=a^k_{01}\restriction\CP(\{0,3\})$ and $g_k=a^k_{02}\restriction\CP(\{0,3\})$.
So $\Bd\alpha=(s_0-s_1)-((f_1-g_1)-(f_0-g_0))$. It is enough to show that a
$1$-chain $(f_1-g_1)-(f_0-g_0)$ is a boundary of a $2$-chain. (Notice that even if for $f_0, f_1$ (similarly for $g_0,g_1$),
$f_0(\{0,3\})=f_1(\{0,3\})=[ab]_{\{0,3\}}$, it need not be that $f_0=f_1$ since $f_0(\{0\})=f^0_{01}(\{0\})$ and $f_1(\{0\})=f^1_{01}(\{0\})$.)

Now by the extension axiom, we can choose $c,c'\models p$ such that
$c\indo^* ab$, $c'\indo^* a'b$, and $ca\equiv c'a'$. For $k=0,1$, consider
$2$-simplices $\hat f_k$ and $\hat g_k$  such that:
\be
	\item $\supp(\hat f_k)=\supp(\hat g_k)=\{0,3,4\}$;
	\item $\hat f_k(\{0,3,4\})=[abc]_{\{0,3,4\}}$ and $\hat g_k(\{0,3,4\})=[a'bc']_{\{0,3,4\}}$;
	\item $\Bd^0 \hat f_0=\Bd^0 \hat f_1$, $\Bd^0 \hat g_0=\Bd^0 \hat g_1$;
	\item $\Bd^1 \hat f_k=\Bd^1 \hat g_k$; and
	\item $\Bd^2 \hat f_k =f_k$, $\Bd^2 \hat g_k=g_k$.
\ee
Then the $1$-chain $(f_1-g_1)-(f_0-g_0)$ is the boundary of the $2$-chain $(\hat f_1 -\hat g_1)-(\hat f_0-\hat g_0)$.
\end{proof}

\begin{remark}\label{rmkforendpt_class}
Notice that in the  proof above, that $s_0,s_1$ have the same support is not at all essential, so Theorem  \ref{endpt_class}
still holds even if their supports are distinct. Moreover, if $(a,b,c,a)$ is a representation of some $1$-shell $s$, even if
$a$ and $bc$ need not be $*$-independent, $[s]=0$ in $H^*_1(p)$. 
\end{remark}

\begin{theorem}\label{endpt_gpstr}
Let $s_0$ and $s_1$ be $1$-shells with support $\{0,1,2\}$, and let $a,a',a''\models p$ be
such that $(a,a')$ and $(a',a'')$ are the endpoint pairs of representations of $s_0$ and
$s_1$, respectively.
Then there is a $1$-shell $s$ with support $\{0,1,2\}$ having a representation with the endpoint
pair
$(a,a'')$ such that $[s]=[s_0]+[s_1]$ in $H_1^*(p)$.
\end{theorem}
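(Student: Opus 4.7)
The plan is to reduce to a convenient choice of $1$-shells representing the classes $[s_0]$ and $[s_1]$, and then exhibit an explicit cancellation in $C_1(\CA^*_p)$. By Theorem~\ref{endpt_class}, the homology class of a $1$-shell of support $\{0,1,2\}$ depends only on the endpoint pair of a representation, so I am free to replace $s_0, s_1$ by any $1$-shells $s_0', s_1'$ of the same support and endpoint pairs $(a,a')$, $(a',a'')$, provided I construct $s$ with support $\{0,1,2\}$ and endpoint pair $(a,a'')$ such that $s_0' + s_1' - s$ is null-homologous.

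Using the extension axiom, first pick $b \models p$ with $b \indo^* aa'a''$, then $c \models p$ with $c \indo^* aa'a''b$. This yields every pairwise $*$-independence $x \indo^* y$ for $x \in \{a,a',a''\}$, $y \in \{b,c\}$, together with $b \indo^* c$. Fix once and for all a single $1$-simplex $g_{12}$ with $g_{12}(\{1,2\}) = [bc]_{\{1,2\}}$, and build three $1$-shells of support $\{0,1,2\}$, each using $g_{12}$ as its face of index $\{1,2\}$, by prescribing the remaining two faces: $s_0'$ uses $[ab]_{\{0,1\}}$ and $[a'c]_{\{0,2\}}$; $s_1'$ uses $[a'b]_{\{0,1\}}$ and $[a''c]_{\{0,2\}}$; and $s$ uses $[ab]_{\{0,1\}}$ and $[a''c]_{\{0,2\}}$. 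Their endpoint pairs are $(a,a')$, $(a',a'')$ and $(a,a'')$ respectively, and the pairwise $*$-independences needed for these faces to be genuine $1$-simplices all follow from the choice of $b,c$.

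Choosing the faces shared among $s_0', s_1', s$ (namely $g_{12}$ across all three, $[ab]_{\{0,1\}}$ shared by $s_0'$ and $s$, and $[a''c]_{\{0,2\}}$ shared by $s_1'$ and $s$) to be literally equal as functors, a direct term-by-term computation in the free abelian group gives
\[
s_0' + s_1' - s \;=\; g_{12}\,-\,[a'c]_{\{0,2\}}\,+\,[a'b]_{\{0,1\}},
\]
which is itself a $1$-shell of support $\{0,1,2\}$ with representation $(a',b,c,a')$, hence endpoint pair $(a',a')$. By Remark~\ref{rmkforendpt_class}, any $1$-shell whose initial and terminal points coincide is null-homologous, so $[s_0'] + [s_1'] - [s] = 0$; combining with Theorem~\ref{endpt_class}, $[s] = [s_0'] + [s_1'] = [s_0] + [s_1]$, as required.

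The only technical care needed, and not a real obstacle, is ensuring that the common $1$-simplices are taken to be literally identical (not merely isomorphic) across $s_0', s_1', s$ so that the displayed cancellation in $C_1(\CA^*_p)$ holds on the nose; since one builds each relevant $1$-simplex directly from the fixed tuples $a, a', a'', b, c$ and then reuses it, this costs nothing.
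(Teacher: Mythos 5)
Your proof is correct, and it takes a genuinely different route from the paper's. The paper's argument first uses Theorem~\ref{endpt_class} to align the index-$0$ vertices, then picks auxiliary independent points $d,e$ and builds an explicit seven-term $2$-chain $\alpha$ (six cone simplices plus a splicing simplex $b$) whose boundary is $s_0 + s_1 - s'$, with $s'$ of support $\{0,3,4\}$; finally it invokes Remark~\ref{rmkforendpt_class} to move the support back to $\{0,1,2\}$. Your argument avoids constructing any $2$-chain at this stage: after normalizing $s_0,s_1$ to convenient representatives $s_0',s_1'$ built from five face functors over the common vertex objects $\acl(a),\acl(b),\acl(c)$, you observe that $s_0'+s_1'-s$ literally simplifies in $C_1(\CA^*_p)$ to a single $1$-shell $g_{12}-[a'c]_{\{0,2\}}+[a'b]_{\{0,1\}}$ with coinciding initial and terminal point $a'$, to which Remark~\ref{rmkforendpt_class} applies directly. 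This is cleaner: the only $2$-chain machinery used is what is already encapsulated in Theorem~\ref{endpt_class} and Remark~\ref{rmkforendpt_class}, and the ``additivity'' is visible as an honest cancellation in the chain group. The one point to state explicitly, which you correctly flag but do not spell out, is that all five $1$-simplices must share the same vertex functor at position $0$ (say $\acl(a)$), at position $1$ (say $\acl(b)$), and at position $2$ (say $\acl(c)$), with the morphisms $f^{\{0\}}_{\{0,2\}}$ etc.\ chosen as elementary maps onto $\acl(a'),\acl(a'')$ as appropriate; this is what makes the face compatibility conditions $\Bd^i f_j = \Bd^{j-1} f_i$ hold on the nose for all three of $s_0',s_1',s$ and also for the leftover $1$-chain. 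Once that bookkeeping is made explicit, the argument is complete and arguably preferable to the one in the text.
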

\begin{proof}
Let $s_0=f^0_{01}+f^0_{12}-f^0_{02}$, and let  $s_1=f^1_{01}+f^1_{12}-f^1_{02}$ with  $\supp(f^k_{ij})=\{i,j\}$.
Since $(a,a')$ and $(a',a'')$ are some endpoint pairs of $s_0$ and $s_1$ respectively, by Theorem \ref{endpt_class},
we may assume the restrictions of $f^0_{01}$ and $f^1_{01}$ to the domain $\CP(\{0\})$ are
the
same, hence so are the restrictions of $f^0_{02}$ and $f^1_{02}$ to $\CP(\{0\})$.
 Let
$b_0, b_1, c_0, c_1\models p$
be such that the two quadruples $(a,b_0,c_0,a')$ and $(a',b_1,c_1,a'')$ are representations
of $s_0$ and $s_1$, respectively. Consider two $*$-independent elements
$d,e\models p$ with $de\indo^* aa'a''b_0 b_1 c_0 c_1$. Then there is a $2$-chain
$\alpha=(a^0_{01}+a^0_{12}-a^0_{02})-b+(a^1_{01}+a^1_{12}-a^1_{02})$, where for $k=0,1$
and $0\le i<j\le 2$, $a^k_{ij}$ and $b$ are $2$-simplices satisfying the following:
\be
	\item $\supp(a^k_{ij})=\{i,j,3+k\}$ and $\supp(b)=\{0,3,4\}$;
	\item $a^k_{ij}\restriction \CP(\{i,j\})=f^k_{ij}$;
	\item $a^0_{01}(\{0,1,3\})=[ab_0 d]_{\{0,1,3\}}$, $a^0_{12}(\{1,2,3\})=[b_0 c_0 d]_{\{0,2,3\}}$, $a^0_{02}(\{0,2,3\})=[a' c_0 d]_{\{0,2,3\}}$, $a^1_{01}(\{0,1,4\})=[a' b_1 e]_{\{0,1,4\}}$, $a^1_{12}(\{1,2,4\})=[b_1 c_1 e]_{\{1,2,4\}}$, $a^1_{02}(\{0,2,4\})=[a'' c_1 e]_{\{0,2,4\}}$, and $b(\{0,3,4\})=[a' d e]_{\{0,3,4\}}$;
\ee
\noindent and $\Bd(\alpha)=s_0+s_1-s'$, where
$s'=a^0_{01}\restriction \CP(\{0,3\})+b\restriction \CP(\{3,4\})-a^1_{02}\restriction \CP(\{0,4\})$ is a
$1$-shell of a support $\{0,3,4\}$ having $(a,a'')$ as its endpoint pair. 
Now by Remark   \ref{rmkforendpt_class}, for a $1$-shell $s$ of a support $\{0,1,2\}$ obtained from $s'$  by simply changing the support $\{0,3,4\}$ to $\{0,1,2\}$, we have $[s]=[s']$ in $H_1^*(p)$. Thus, there is a 2-chain $\alpha'$ having a $1$-chain $s_0+s_1-s$ as its boundary,
and so $[s]=[s_0]+[s_1]$.
\end{proof}

Now, we summarize some properties of endpoint pairs of $1$-shells which follow from
Theorem \ref{endpt_class} and Theorem \ref{endpt_gpstr}. We define an equivalence relation $\sim$ on the set of pairs of realizations $p$ as follows: For $a,a',b,b'\models p$, $(a,b)\sim(a',b')$ if two pairs $(a,b)$ and $(a',b')$ are endpoint pairs of $1$-shells $s$ and $s'$ respectively such that $[s]=[s']\in H^*_1(p)$. We write $\CE^*=p(\CM)\times p(\CM)/\sim$. We denote the class of $(a,b)\in p(\CM)\times p(\CM)$  by $[a,b]$. By \ref{endpt_class}, if $ab\equiv a'b'$, then $[a,b]=[a',b']$.
Now, define a binary operation $+_{\CE^*}$ on $\CE^*$ as follows: For
$[a,b],[b',c']\in \CE^*$, $[a,b]+_{\CE^*} [b',c']=[a,c]$, where $bc\equiv b'c'$. Due to Theorem \ref{endpt_gpstr}, this operation is well-defined.

\begin{remark}\label{summary_H1_representation}
The pair $(\CE^*,+_{\CE^*})$ forms a commutative group which is isomorphic to $H^*_1(p)$.
More specifically, for $a,b,c\models p$ and $\sigma\in \aut(\CM)$, we have:
\begin{itemize}
	\item $[a,b]+[b,c]=[a,c]$;
	\item $[a,a]$ is the identity element;
	\item $-[a,b]=[b,a]$;
	\item $\sigma([a,b]):=[\sigma(a),\sigma(b)]=[a,b]$; and
	\item $f:\CE^*\to H^*_1(p)$ sending $[a,b]\mapsto [s]$, where $(a,b)$ is an endpoint pair of $s$, is a group isomorphism.
\end{itemize}
\end{remark}
\noindent {\bf From now on,  we identify $\CE^*$ and $H^*_1(p)$.} Notice that, indeed,  the group structure of $\CE^*$  depends only on the types of $(a,b)$'s with $[a,b]\in \CE^*$.
Hence one may similarly define an equivalence relation on
$$\{ q(x,y): \ p(x)\cup p(y)\subseteq q(x,y) \mbox{ a complete type over } \emptyset\}$$
to form $ \CE^*_{\tp}$, and give a
corresponding group operation to conclude that $\CE^*$ and $\CE^*_{\tp}$ are isomorphic.

\medskip

 Due to the same proof in \cite{KKL},    we can restate  Fact \ref{h1=0}  as follows using the endpoint notion:
\begin{fact}\label{Lascar_zero}
Let $a,b\models p$ be such that $a\equiv^L b$. Then any $1$-shell having $(a,b)$ as its
endpoint pair is the boundary of a $2$-chain; i.e., $[a,b]=0$ in $H_1^*(p)$.
\end{fact}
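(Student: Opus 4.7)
The plan is to reduce the claim to the case $a\equiv_M b$ for a single model $M$, extract from this an $M$-indiscernible sequence containing both $a$ and $b$, and then play the automorphism-invariant additive structure of $\CE^*\cong H_1^*(p)$ (Remark \ref{summary_H1_representation}) against itself to force the class to vanish. Note that by Theorem \ref{endpt_class}, the conclusion ``every $1$-shell with endpoint pair $(a,b)$ is a $2$-boundary'' is simply the statement $[a,b]=0$ in $H_1^*(p)$.

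The reduction is immediate: by the definition of Lascar equivalence, $a\equiv^L b$ witnesses a finite chain $a=c_0,c_1,\ldots,c_k=b$ and models $M_0,\ldots,M_{k-1}$ with $c_i\equiv_{M_i}c_{i+1}$, where each $c_i$ realizes $p$ (being conjugate to $a$); the telescoping identity $[a,b]=\sum_{i=0}^{k-1}[c_i,c_{i+1}]$ from Remark \ref{summary_H1_representation} then reduces the task to showing $[c,c']=0$ whenever $c\equiv_M c'$ for a single model $M$. So assume $a\equiv_M b$ with $M$ a model. Because over a model equality of types coincides with Lascar strong type, there is an infinite $M$-indiscernible sequence $(d_n)_{n<\omega}$ containing both $a$ and $b$, say $a=d_{i_0}$ and $b=d_{i_1}$. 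By $M$-indiscernibility (hence $\emptyset$-indiscernibility), the type of any strictly increasing pair $(d_i,d_j)$ depends only on the order, so $\tp(d_0,d_m)=\tp(d_0,d_1)$ for every $m\ge 1$; via the automorphism-invariance of $[\cdot,\cdot]$ this gives $[d_0,d_m]=[d_0,d_1]$. On the other hand, telescoping $[d_0,d_m]=\sum_{i=0}^{m-1}[d_i,d_{i+1}]$ together with $[d_i,d_{i+1}]=[d_0,d_1]$ (again by indiscernibility) gives $[d_0,d_m]=m[d_0,d_1]$. Comparing and specializing to $m=2$ forces $[d_0,d_1]=0$, and then $[a,b]=\pm[d_0,d_1]=0$ depending on whether $i_0<i_1$ or $i_0>i_1$; in the remaining case $i_0=i_1$ one has $a=b$ and $[a,a]=0$ directly.

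The only non-formal input is the standard Ramsey/compactness extraction producing an $M$-indiscernible sequence containing the pair $a,b$ from $a\equiv_M b$ over a model $M$; this is the main step where classical model-theoretic background is invoked. Everything else is clean algebra inside $\CE^*$, using only the two basic facts from Remark \ref{summary_H1_representation}: the class $[u,v]$ depends only on $\tp(u,v)$, and consecutive additivity $[u,v]+[v,w]=[u,w]$ holds.
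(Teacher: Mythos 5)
Your approach---reducing to a single indiscernible sequence and then playing the abelian group structure of $\CE^*$ and indiscernibility against each other to force $[d_0,d_1]=2[d_0,d_1]$---is a genuinely different and considerably cleaner route than the paper's, which relies on the combinatorial classification of $2$-chains with $1$-shell boundaries (the chain-walk machinery from \cite{KKL}, reproduced in Remark \ref{chain-walk} and Theorem \ref{improved_classification}) to build an explicit bounding $2$-chain. Your argument instead exploits only the two structural facts recorded in Remark \ref{summary_H1_representation}: $[u,v]$ depends only on $\tp(u,v)$, and $[u,v]+[v,w]=[u,w]$.

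There is, however, a genuine gap in the reduction step. From $a\equiv_M b$ over a model $M$ you assert that there is a single $M$-indiscernible sequence containing both $a$ and $b$, deducing this from ``over a model, type $=$ Lascar strong type.'' That deduction does not follow: equality of Lascar strong types over $M$ only gives that the Lascar distance of $a$ and $b$ over $M$ is \emph{finite} (a standard coheir argument bounds it by $2$), i.e.\ a finite chain of elements with consecutive pairs lying on common $M$-indiscernible sequences; it does not say that $\tp(ab/M)$ is itself the type of two consecutive terms of an $M$-indiscernible sequence. The fix is immediate and in fact simplifies the proof: drop the detour through models and use the Lascar-distance characterization of $\equiv^L$ directly. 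Since $a\equiv^L b$, there is a finite chain $a=e_0,e_1,\ldots,e_m=b$ in $p(\CM)$ (each $e_j$ is automorphic to $a$, hence realizes $p$) with each consecutive pair lying on a common infinite $\emptyset$-indiscernible sequence. Telescoping $[a,b]=\sum_{j<m}[e_j,e_{j+1}]$ exactly as you do reduces the claim to the case of a single indiscernible sequence $(d_n)_{n<\omega}$ with $a=d_0$, $b=d_1$, where your computation $[d_0,d_1]=[d_0,d_2]=[d_0,d_1]+[d_1,d_2]=2[d_0,d_1]$, hence $[d_0,d_1]=0$, goes through verbatim. With this correction the proof is sound and pleasantly short.
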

\subsection{The Lascar group and the first homology groups}\label{lascargptoh1}
Here, using the notion of an ordered bracket, for each $a\models p$ we define a map $\varphi^*_a$ from the automorphism group over $B(=\emptyset)$ into the first homology group of $p$ as follows:
 For $\sigma\in \aut(\CM)$, we let $\varphi^*_a(\sigma)=[a,\sigma(a)]$.
 This map will be proven to be a surjective homomorphism not depending on
  the choice of $a\models p$. Thus, we get a canonical epimorphism
 from $\aut(\CM)$ onto $H_1^*(p)$ and we study its kernel.

\begin{theorem}\label{canonical_epi}
\be
	\item Each $\varphi^*_a$ is an epimorphism.
	\item For $a,b\models p$, $\varphi^*_a=\varphi^*_b$. So we get a canonical map $\varphi^*_p$ from $\aut(\CM)$ into $H_1^*(p)$.
	\item There is a canonical epimorphism $\varPhi^*_p$ from $\gal_L(\CM)$ onto $H_1^*(p)$.
\ee
\end{theorem}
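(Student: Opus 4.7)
The plan is to verify the three parts in sequence, leaning on the explicit group structure of $H_1^*(p)\cong\CE^*$ computed in Remark \ref{summary_H1_representation} and on the vanishing result Fact \ref{Lascar_zero}.

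For part (1), I would first check that $\varphi_a^*$ is a homomorphism. Given $\sigma,\tau\in\aut(\CM)$, applying $\sigma$ to the pair $(a,\tau(a))$ yields $[\sigma(a),\sigma\tau(a)]=[a,\tau(a)]$ by the automorphism-invariance bullet of Remark \ref{summary_H1_representation}. Combining with the concatenation rule $[a,\sigma(a)]+[\sigma(a),\sigma\tau(a)]=[a,\sigma\tau(a)]$, we obtain $\varphi_a^*(\sigma)+\varphi_a^*(\tau)=\varphi_a^*(\sigma\tau)$. For surjectivity, Fact \ref{H1 in amenable family} together with Remark \ref{summary_H1_representation} says every element of $H_1^*(p)$ has the form $[c,d]$ for some $c,d\models p$. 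Since $p$ is a complete type over $\emptyset$, there exist $\sigma_1,\sigma_2\in\aut(\CM)$ with $\sigma_1(a)=c$ and $\sigma_2(a)=d$; then
\[
[c,d]=[a,d]-[a,c]=\varphi_a^*(\sigma_2)-\varphi_a^*(\sigma_1)=\varphi_a^*(\sigma_2\sigma_1^{-1}).
\]

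For part (2), fix $a,b\models p$ and $\sigma\in\aut(\CM)$. The idea is just to chain the endpoints through $b$ and $\sigma(b)$ and cancel using invariance. Concretely,
\[
[a,\sigma(a)]=[a,b]+[b,\sigma(b)]+[\sigma(b),\sigma(a)],
\]
and since $\sigma$ is an automorphism, $[\sigma(b),\sigma(a)]=-[\sigma(a),\sigma(b)]=-[a,b]$ by the invariance and inverse bullets in Remark \ref{summary_H1_representation}. The outer terms cancel and we conclude $\varphi_a^*(\sigma)=\varphi_b^*(\sigma)$, so the common value $\varphi_p^*(\sigma):=[a,\sigma(a)]$ is well-defined independent of the base realization.

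For part (3), recall that $\gal_L(\CM)=\aut(\CM)/\Autf_L(\CM)$, where $\Autf_L(\CM)$ is the group of Lascar strong automorphisms. To factor $\varphi_p^*$ through $\gal_L(\CM)$ it suffices to show $\varphi_p^*$ vanishes on $\Autf_L(\CM)$. If $\sigma\in\Autf_L(\CM)$ then $a\equiv^L\sigma(a)$, so Fact \ref{Lascar_zero} gives $[a,\sigma(a)]=0$ in $H_1^*(p)$. Thus $\varphi_p^*$ descends to a homomorphism $\varPhi_p^*\colon\gal_L(\CM)\to H_1^*(p)$, which remains surjective by part (1).

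The only nontrivial step is the group-homomorphism calculation in (1) and the cancellation in (2); both reduce to the identities already packaged in Remark \ref{summary_H1_representation}, so I do not anticipate a genuine obstacle, just careful bookkeeping with the endpoint presentation of $H_1^*(p)$.
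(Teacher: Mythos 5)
Your proof is correct and follows essentially the same strategy as the paper's: parts (1) and (3) are virtually identical, while part (2) replaces the paper's conjugation argument $\varphi^*_b(\sigma)=\varphi^*_a(\tau^{-1}\sigma\tau)$ with a direct endpoint-chain decomposition through $b$ and $\sigma(b)$, a cosmetic variation that uses the same underlying facts (concatenation, invariance, and abelianness of $H_1^*(p)$ from Remark \ref{summary_H1_representation}).
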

\begin{proof}
(1) Fix $a\models p$. At first, surjectivity of $\varphi^*_a$ comes from the fact that for $b\models p$, there is a $\sigma\in \aut(\CM)$ such that $\sigma(a)=b$. It is enough to show that $\varphi_a$ is a homomorphism. For $\sigma,\tau\in \aut(\CM)$,
$$\begin{array}{c c l}
\varphi^*_a(\sigma\tau)&=&[a,\sigma\tau(a)]\\
&=&[a,\sigma(a)]+[\sigma(a),\sigma\tau(a)]\\
&=&[a,\sigma(a)]+\sigma[a,\tau(a)]\\
&=&[a,\sigma(a)]+[a,\tau(a)]\\
&=&\varphi^*_a(\sigma)+\varphi^*_a(\tau).
\end{array}$$
So $\varphi^*_a$ is a homomorphism.\\

\smallskip

(2) Choose $a,b\models p$. Then there exists
$\tau\in \aut(\CM)$ such that $b=\tau(a)$. For $\sigma\in\aut(\CM)$,
$$\begin{array}{c c l}
\varphi^*_b(\sigma)&=&\varphi^*_a(\tau^{-1}\sigma\tau)\\
&=&\varphi^*_a(\tau^{-1})+\varphi^*_a(\sigma)+\varphi^*_a(\tau)\\
&=&\varphi^*_a(\sigma).
\end{array}$$
Thus $\varphi^*_a=\varphi^*_b$, and we get a canonical epimorphism\\
$$\varphi^*_p \mbox{ }:\ \aut(\CM)\rightarrow H_1^*(p),$$\\
defined by: $\varphi^*_p:=\varphi^*_a \mbox{ for some }a\models p.$

\smallskip

(3) By Fact \ref{Lascar_zero}, the kernel of $\varphi^*_p$ contains $\autf(\CM)$ and
$\varphi^*_p$ induces a canonical epimorphism $\varPhi^*_p$ from $\gal_L(\CM)$ onto $H_1^*(p)$.
\end{proof}

\section{The relativised Lascar groups}\label{lascar}
In this section, we introduce some candidates for the notion of Lascar group of a strong type $p$, which are intended
to be the Lascar group relativised to $p$.
We begin by presenting several automorphism groups. Let $\Sigma(\x)$ be a partial type over $\emptyset$ (with $\x$ of possibly infinite length, and
realizations of $\Sigma$ need not be algebraically closed.)
Recall that $\autf_B(\CM)$ is the subgroup of $\aut_B(\CM)$ generated by
$$\{ f\in \aut_B(\CM):\ f\in \aut_M(\CM)\mbox{ for some model } (B\subseteq )M\prec\CM\},$$
and
$$\gall(T,B):=\aut_B(\CM)/\autf_B(\CM)$$
(which does not depend on the choice of the monster model $\CM$).

\begin{definition}\label{variations_strongauto}
\be
	\item $\aut(\Sigma(\CM)):=\{\sigma \restriction \Sigma(\CM):\ \sigma\in \aut(\CM)\}$;
	\item $\autf_{\res}(\Sigma(\CM)):=\{\sigma \restriction \Sigma(\CM) :\ \sigma\in \Autf(\CM)\}$;
	\item for a cardinal $\lambda> 0$, $\autf_{\fix}^{\lambda}(\Sigma(\CM)):=$
	$$\{\sigma \restriction \Sigma(\CM) : \ \sigma\in \aut(\CM) \mbox{ such that for any }  a_i\models \Sigma \mbox{ and }\a=(a_i)_{i< \lambda},\  \a\equiv^L \sigma(\a)
	 \};$$ and
	\item $\autf_{\fix}(\Sigma(\CM)):=$
	$$\{\sigma \restriction \Sigma(\CM) :\ \sigma\in \aut(\CM) \mbox{ such that } \a\equiv^L \sigma(\a)\mbox{ where }	\a \mbox{ is some enumeration of }
	\Sigma(\CM) \}.$$
\ee
\end{definition}

It is straightforward to see that each of the groups  $\autf_{\res}(\Sigma(\CM))\leq \autf_{\fix}(\Sigma(\CM))\leq \autf^{\lambda}_{\fix}(\Sigma(\CM))$ is a normal subgroup of $\aut(\Sigma(\CM))$.

\begin{definition}\label{variations_galois}
\be
	\item $\gall^{\res}(\Sigma(\CM)):=\aut(\Sigma(\CM))/\autf_{\res}(\Sigma(\CM))$;
	\item $\gall^{\fix, \lambda}(\Sigma(\CM)):=\aut(\Sigma(\CM))/\autf_{\fix}^{\lambda}(\Sigma(\CM))$; and 	
	\item $\gall^{\fix}(\Sigma(\CM)):=\aut(\Sigma(\CM))/\autf_{\fix}(\Sigma(\CM))$.
\ee
\end{definition}

\begin{remark}\label{unctbl}
We have $\autf_{\fix}(\Sigma(\CM))=\autf_{\fix}^{\omega}(\Sigma(\CM))$. So $\gall^{\fix}(\Sigma(\CM))=\gall^{\fix, \omega}(\Sigma(\CM))$.
\end{remark}
\begin{proof}
 We will show by induction on $\lambda\geq \omega$ that tuples $(a_j)_{j<\lambda},(b_j)_{j<\lambda}$ with $a_j,b_j\models \Sigma$ are
 Lascar-equivalent iff all their corresponding countable subtuples are. The base case is clear.
 Suppose the statement is true for all cardinal numbers smaller than
 $\lambda$, and assume that corresponding countable subtuples of $(a_j)_{j<\lambda}$ and $(b_j)_{j<\lambda}$
 are Lascar-equivalent. By the inductive hypothesis, for every $i<\lambda$ there is $n_i<\omega$
 such that the Lascar distance of $a_{<i}(:=(a_j)_{j<i})$ and $b_{<i}$ is equal to $n_i$.
 If there is  $n<\omega$ such that $\{i\in \lambda:n=n_i\}$ is cofinal in $\lambda$, then
 the Lascar distance of $(a_i)_{i<\lambda}$ and $(b_i)_{i<\lambda}$ is $n$ and we are done.
 So let us assume it is not the case, and hence there are $(i_k<\lambda)_{k<\omega}$ such that $n_{i_k}\geq k$.
 Then by compactness, for each $k<\omega$,
 there is a finite subset $I_k$ of $i_k$ such that the Lascar distance of $a_{I_k}:=(a_j)_{j\in I_k}$ and $b_{I_k}$
 is at least $k$. Considering the countable set $I:=\bigcup_{k<\omega}I_k$, we get that $a_I$ and $b_I$ are not Lascar equivalent,
 a contradiction.
\end{proof}

\begin{remark}\label{endowtop}
\be\item
 In \cite{Z} or  \cite{K}, how to endow $\gall(T)$ with
 a canonical  topology   to make it a topological group  is explained as follows.
 For fixed small submodels $M$ and $N$ of $\CM$,  it  easily follows that if 
 $f(M)\equiv_N g(M)$ for $f,g\in \aut(\CM)$, then
 $fg^{-1}\in \autf(\CM)$.
 Hence there are
 the canonical maps $\mu:\Aut(\CM)\to S_M(N)$ (where $S_M(N)$ denotes the Stone space
 of types over $N$ of all conjugates of $M$) mapping  $f$ to $\tp(f(M)/N)$, and $\nu:S_M(N)\to \gall(T)$
such that $\nu\mu:\Aut(\CM)\to \gall(T)$ is the quotient map sending $f$ to $f\autf(\CM)$.
The quotient topology under the map $\nu$ is given to  $\gall(T)$.

\item Analogously, we consider
$\nu ':S_M(N)\to \gall^{\res}(\Sigma(\CM))$ such that $\nu '\mu:\aut(\CM)\to \gall^{\res}(\Sigma(\CM))$ is the quotient map
 sending $f$ to $(f\restriction \Sigma(\CM))\autf_{\res}(\Sigma(\CM))$.  Again, we put on $\gall^{\res}(\Sigma(\CM))$ the quotient topology with
respect to $\nu '$. Notice that $\nu '=\xi\nu$,
where $\xi :\gall(T)\to \gall^{\res}(\Sigma(\CM))$ is given by $\xi(h\autf(\CM))=(h\restriction \Sigma(\CM))\autf_{\res}(\Sigma(\CM))$ (it
 is easy to see that $\xi$ is well-defined).

\item The topology on $\gall^{\res}(\Sigma(\CM))$ defined above is the same as the quotient topology  induced from $\gall(T)$ by $\xi$. In particular, the topology on
 $\gall^{\res}(\Sigma(\CM))$  does not depend on the choice of models $M$ and $N$ above, and $\xi$ is a continuous map: For a subset $W$ of $\gall^{\res}(\Sigma(\CM))$,
  we have that $W$ is open iff $\nu'^{-1}[W]=\nu^{-1}[\xi^{-1}[W]]$ is open in $S_M(N)$ iff
 $\xi^{-1}[W]$ is open in $\gall(T)$.

\item
In a similar manner, by considering the map $\nu'':S_M(N)\to \gall^{\fix}(\Sigma(\CM))$ such that
$\nu''\mu:\aut(\CM)\to \gall^{\fix}(\Sigma(\CM))$ is the quotient map, we equip the group $\gall^{\fix}(\Sigma(\CM))$ with the topology induced
by $\nu''$, which coincides with the topology induced on $\gall^{\fix}(\Sigma(\CM))$ by the quotient
map $\gall(T)\to \gall^{\fix}(\Sigma(\CM))$.
Since the quotient of a topological group by a normal subgroup is
always a topological group with respect to the quotient topology, using the fact that $\gall(T)$ is a topological
group (\cite[Theorem 16]{Z}) we obtain the following corollary.
\ee
\end{remark}

\begin{corollary}
With the topologies defined above, $\gall^{\res}(\Sigma(\CM))$ and $\gall^{\fix}(\Sigma(\CM))$ are topological groups.
\end{corollary}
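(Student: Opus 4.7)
The plan is to reduce everything to the already-established fact that $\gall(T)$ is a topological group (\cite[Theorem 16]{Z}), by exhibiting each of $\gall^{\res}(\Sigma(\CM))$ and $\gall^{\fix}(\Sigma(\CM))$ as a quotient of $\gall(T)$ by a normal subgroup, equipped with the quotient topology. The key identification has in fact already been carried out in Remark \ref{endowtop}: the topology on $\gall^{\res}(\Sigma(\CM))$ coming from $\nu'$ coincides with the topology induced from $\gall(T)$ via the restriction map $\xi$, and similarly for $\gall^{\fix}(\Sigma(\CM))$ via the analogous map (call it $\xi'$). So modulo this identification, the statement is immediate from the standard fact that for any topological group $G$ and normal subgroup $N\trlt G$, the quotient $G/N$ with the quotient topology is a topological group.

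First I would check that $\xi$ and $\xi'$ are genuinely surjective group homomorphisms. Well-definedness of $\xi$ on cosets follows because if $f\autf(\CM)=g\autf(\CM)$ then $fg^{-1}\in \autf(\CM)$, hence $(fg^{-1})\restriction \Sigma(\CM)\in \autf_{\res}(\Sigma(\CM))$ by the very definition of $\autf_{\res}$; multiplicativity is clear since restriction to $\Sigma(\CM)$ respects composition. Surjectivity is immediate from $\aut(\Sigma(\CM))=\{\sigma\restriction \Sigma(\CM):\sigma\in \aut(\CM)\}$. For $\xi'$ one repeats the argument, now using that $\autf(\CM)\subseteq \autf_{\fix}(\Sigma(\CM))$ (any element of $\autf(\CM)$ fixes every enumeration of $\Sigma(\CM)$ up to Lascar equivalence). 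In both cases the kernel is automatically a normal subgroup of $\gall(T)$.

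Second, I would invoke the general topological group fact: if $G$ is a topological group and $N\trlt G$, then $G/N$ with the quotient topology is a topological group, since the quotient map $G\to G/N$ is open and continuous so multiplication and inversion descend to continuous maps. Applying this to $\gall(T)$ with the normal subgroup $\ker \xi$ (respectively $\ker \xi'$) yields a topological group structure on the quotient $\gall(T)/\ker \xi\cong \gall^{\res}(\Sigma(\CM))$ (respectively $\gall(T)/\ker \xi'\cong \gall^{\fix}(\Sigma(\CM))$). Combined with the topology identifications recorded in Remark \ref{endowtop}(3),(4), this completes the proof.

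The only nontrivial point is the coincidence of the $\nu'$- (respectively $\nu''$-) induced topology with the quotient topology coming from $\gall(T)$, which serves as the bridge between the concrete definition of the topologies via Stone spaces and the general topological-group machinery. This has already been dispatched in Remark \ref{endowtop}, so no further work is required; the corollary is a formal consequence.
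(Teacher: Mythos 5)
Your proposal is correct and matches the paper's own argument, which is contained in Remark \ref{endowtop}(3),(4): the topologies on $\gall^{\res}(\Sigma(\CM))$ and $\gall^{\fix}(\Sigma(\CM))$ are identified with the quotient topologies induced from $\gall(T)$, and the corollary then follows from the standard fact that the quotient of a topological group by a normal subgroup is a topological group, together with \cite[Theorem 16]{Z}.
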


\begin{proposition}\label{indep_gal_fix}
The group $\gall^{\fix,\lambda}(\Sigma(\CM))$ (for $\lambda\leq \omega$, so in particular $\gall^{\fix}(\Sigma(\CM))$)  does not depend on the choice of the monster model $\CM$.
\end{proposition}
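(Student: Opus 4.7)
The strategy is to present $\gall^{\fix,\lambda}(\Sigma(\CM))$ as a canonical quotient of $\gall(T,\CM)$ whose kernel admits a description that does not mention $\CM$; since $\gall(T,\CM)$ itself is known to be independent of $\CM$, this will suffice.

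First, I would compose the restriction map $\aut(\CM)\to \aut(\Sigma(\CM))$ (which is surjective by the very definition of $\aut(\Sigma(\CM))$) with the quotient $\aut(\Sigma(\CM))\to \gall^{\fix,\lambda}(\Sigma(\CM))$ to obtain a surjective homomorphism $\pi_{\CM}$. Any $\sigma\in \autf(\CM)$ preserves Lascar strong types over $\emptyset$, so its restriction to $\Sigma(\CM)$ lies in $\autf_{\fix}^{\lambda}(\Sigma(\CM))$; hence $\pi_{\CM}$ factors through the quotient map $\aut(\CM)\to \gall(T,\CM)$ and induces a surjection
\[
\tilde\pi_{\CM}\colon \gall(T,\CM)\longrightarrow \gall^{\fix,\lambda}(\Sigma(\CM)).
\]

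Next, I would describe $\ker\tilde\pi_{\CM}$ intrinsically. Let $X^{\lambda}_{\Sigma}$ denote the set of complete Lascar strong types over $\emptyset$ in $\lambda$ variables whose coordinates realize $\Sigma$ — a set manifestly depending only on $T$. The formula $[\sigma]\cdot \lstp(\bar a):=\lstp(\sigma(\bar a))$ yields a well-defined action of $\gall(T,\CM)$ on $X^{\lambda}_{\Sigma}$, well-definedness in $\bar a$ following from $\aut(\CM)$-invariance of Lascar equivalence and well-definedness in $[\sigma]$ from the fact that $\autf(\CM)$ fixes every $\lstp$. By saturation and strong homogeneity of $\CM$, every Lascar class in $X^{\lambda}_{\Sigma}$ is realized in $\CM$. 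Consequently, an element $[\sigma]$ lies in $\ker \tilde\pi_{\CM}$ iff $\sigma(\bar a)\equiv^{L}\bar a$ for every $\lambda$-tuple $\bar a$ of realizations of $\Sigma$ in $\CM$, iff $[\sigma]$ acts trivially on $X^{\lambda}_{\Sigma}$.

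To conclude, I would appeal to the fact that the canonical isomorphism $\gall(T,\CM_{1})\cong \gall(T,\CM_{2})$ between Lascar groups computed in two monster models is compatible with the action on $X^{\lambda}_{\Sigma}$, since in both cases the action admits the same purely syntactic description: $[\sigma]\cdot p$ is the Lascar class of $\sigma(\bar a)$ for any (equivalently, every) realization $\bar a\models p$. Hence the two kernels $\ker\tilde\pi_{\CM_{1}}$ and $\ker\tilde\pi_{\CM_{2}}$ correspond under this isomorphism, producing the desired isomorphism $\gall^{\fix,\lambda}(\Sigma(\CM_{1}))\cong \gall^{\fix,\lambda}(\Sigma(\CM_{2}))$. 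The main obstacle is verifying this compatibility rigorously; the cleanest route is to embed both $\CM_{1}$ and $\CM_{2}$ elementarily into a common larger monster $\CM^{*}$, note that for each $i=1,2$ restriction induces an isomorphism $\gall(T,\CM^{*})\cong \gall(T,\CM_{i})$, and observe that these restrictions intertwine the actions on $X^{\lambda}_{\Sigma}$ because every Lascar class realized in $\CM^{*}$ is already realized in $\CM_{i}$.
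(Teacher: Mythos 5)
Your proposal is correct, but it takes a genuinely different route from the paper. The paper argues directly: fixing nested monsters $\CM\prec\CM'$ with appropriate saturation and homogeneity, it defines a map $\eta\colon\gall^{\fix,\lambda}(\Sigma(\CM))\to\gall^{\fix,\lambda}(\Sigma(\CM'))$ by extending automorphisms, and then verifies by hand that $\eta$ is well-defined (using that any tuple in $\Sigma(\CM')^\lambda$ is Lascar-equivalent to one in $\CM$), injective, and surjective (using that any $g\in\aut(\CM')$ agrees modulo $\autf(\CM')$ with some $g'$ satisfying $g'[\CM]=\CM$). Your proof instead factors $\gall^{\fix,\lambda}(\Sigma(\CM))$ as a quotient of $\gall(T)$ --- whose independence of $\CM$ you take as a black box --- and identifies the kernel of $\tilde\pi_\CM$ intrinsically as the pointwise stabilizer of the natural $\gall(T)$-action on the set $X^\lambda_\Sigma$ of Lascar strong types of $\lambda$-tuples from $\Sigma$. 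This is more conceptual: it tells you what $\gall^{\fix,\lambda}(\Sigma)$ \emph{is} (the image of $\gall(T)$ in $\Sym(X^\lambda_\Sigma)$), not merely that two constructions of it agree, and it makes the normality of $\autf^\lambda_\fix$ and the well-definedness transparent. The trade-off is that it offloads the real work onto two facts you must supply or cite: that $\gall(T)$ is $\CM$-independent, and that the canonical isomorphism between Lascar groups computed in different monsters intertwines the action on $X^\lambda_\Sigma$ (the paper's hands-on argument implicitly re-proves a case of the first fact). One small phrasing slip: automorphisms of the larger monster $\CM^*$ do not restrict to automorphisms of $\CM_i$; the isomorphism $\gall(T,\CM_i)\cong\gall(T,\CM^*)$ is induced by \emph{extension}, with the nontrivial direction (surjectivity) secured by choosing a representative of each $\autf(\CM^*)$-coset that stabilizes $\CM_i$ setwise --- exactly the step the paper carries out in its surjectivity argument. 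With that correction the argument is rigorous, and the two proofs are both valid.
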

\begin{proof}
Consider two monster models $\CM\prec\CM'$, such that $\CM'$ is $|\CM|^+$-saturated and $|\CM|$-strongly homogeneous.
We define a map $$\eta:\gall^{\fix,\lambda}(\Sigma(\CM))\to \gall^{\fix,\lambda}(\Sigma(\CM'))$$ by $$\eta([f\restriction \Sigma(\CM)])=[f'\restriction \Sigma(\CM')],$$
where $f'\in \aut(\CM')$ is any extension of $f\in \aut(\CM)$. Let us check that $\eta$ is well-defined. Suppose
that two automorphisms $f_1,f_2\in \aut(\CM)$ determine the same element in $\gall^{\fix,\lambda}(\Sigma(\CM))$; i.e. $g:=(f_1f_2^{-1})\restriction \Sigma(\CM)$
belongs to $\autf^\lambda_{\fix}(\Sigma(\CM))$. Take any $f_1',f_2'\in\aut(\CM')$ extending $f_1$ and $f_2$ respectively. To see
that $g':=(f_1'f_2'^{-1})\restriction \Sigma(\CM')$ is in $\autf^\lambda_{\fix}(\Sigma(\CM'))$, take any $\lambda$-tuple
$a'$ of elements of  $\Sigma(\CM')$. Pick $a\in \CM$ which is Lascar equivalent to $a'$. Then $g'(a')\equiv^L g'(a)=g(a)\equiv^L a\equiv^L a'$,
which shows that $g'\in \autf_{\fix}^\lambda(\CM')$ (by Remark \ref{unctbl}), and so $\eta$ is well-defined.

Now it is clear that $\eta$ is an injective homomorphism. To see that it is onto, consider
any element $[g\restriction \Sigma(\CM')]$ of
$\gall^{\fix,\lambda}(\Sigma(\CM'))$, where $g\in \aut(\CM')$. By the  argument in Remark
\ref{endowtop}(1), we can find $g'\in \aut(\CM')$ such that
$gg'^{-1}\in \Autf(\CM')$ and $g'[\CM]=\CM$. Then $\eta([g'\restriction \Sigma(\CM)])=[g'\restriction \Sigma(\CM')]=[g\restriction \Sigma(\CM')]$.
This shows that $\eta$ is an isomorphism.
\end{proof}
\begin{notation} Due to above Proposition \ref{indep_gal_fix},
we write $\gall^{\fix, n}(\Sigma), \gall^{\fix}(\Sigma)$ for the groups $\gall^{\fix, n}(\Sigma(\CM)), \gall^{\fix}(\Sigma(\CM))$, respectively.
\end{notation}

\begin{question}\label{indep_gal_type}
Is the group $\gall^{\res}(\Sigma(\CM))$ independent from the choice of the monster model $\CM$?
What is an example in which $\autf_{\res}(\Sigma(\CM))$ differs from $\autf_{\fix}(\Sigma(\CM))$?
\end{question}

\begin{remark}\label{modelsol.}
If there are realizations  $a_i\in \Sigma(\CM)$ and   a small submodel $(B\subseteq )M$ of $\CM$ such that $M\subseteq \dcl(B a_i|\ i<\lambda)$,
then $\autf_{\res}(\Sigma(\CM))=\autf_{\fix}(\Sigma(\CM)).$ In particular, if $M\subseteq \dcl(Ba_0)$, then 
$\autf_{\res}(\Sigma(\CM))=\autf_{\fix}(\Sigma(\CM))= \autf_{\fix}^1(\Sigma(\CM)).$
\end{remark}

\begin{fact}\label{canonical_epimorphism}
Recall from Section \ref{lascargptoh1} that we have a canonical epimorphism $$\psi^*_{p}:\aut(p(\CM))\rightarrow H^*_1(p)$$
 sending each $\sigma\in\aut(p(\CM))$ to $[a,\sigma(a)]$ for some/any realization $a$ of $p$.
  Due to Fact \ref{Lascar_zero}, $\ker( \psi^*_{p})$ contains $\autf_{\fix}(p(\CM))$. Hence,
  this induces
   a canonical epimorphism $\Psi^*_{p}:\ \gall^{\fix}(p)\rightarrow H^*_1(p)$ as well.
\end{fact}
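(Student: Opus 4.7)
The plan is to mirror the proof of Theorem \ref{canonical_epi} in this restricted setting, and then observe that the kernel automatically contains $\autf_{\fix}(p(\CM))$ so as to pass to the quotient $\gall^{\fix}(p)$.

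First I would define, for each fixed $a\models p$, the map $\psi^*_{p,a}\colon\aut(p(\CM))\to H_1^*(p)$ by $\psi^*_{p,a}(\tau):=[a,\tau(a)]$. Since every $\tau\in \aut(p(\CM))$ is by definition the restriction of some $\sigma\in\aut(\CM)$, and the value $\tau(a)$ depends only on $\tau$ (as $a\in p(\CM)$), this is unambiguously defined. The homomorphism property is then the same computation as in Theorem \ref{canonical_epi}(1), using the identities of Remark \ref{summary_H1_representation}: for $\tau_1,\tau_2\in\aut(p(\CM))$,
\[
[a,\tau_1\tau_2(a)]=[a,\tau_1(a)]+[\tau_1(a),\tau_1\tau_2(a)]=[a,\tau_1(a)]+\tau_1\cdot[a,\tau_2(a)]=[a,\tau_1(a)]+[a,\tau_2(a)],
\]
where the last equality uses that automorphisms act trivially on classes in $H_1^*(p)$.

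Next, independence from the choice of $a$ follows verbatim from the conjugation argument of Theorem \ref{canonical_epi}(2): given $b\models p$, pick $\tau\in\aut(p(\CM))$ with $\tau(a)=b$, and then $\psi^*_{p,b}(\sigma)=\psi^*_{p,a}(\tau^{-1}\sigma\tau)=\psi^*_{p,a}(\sigma)$. This yields the canonical map $\psi^*_p$. Surjectivity is immediate from Remark \ref{summary_H1_representation}: any class in $H_1^*(p)$ is represented by some $[a,b]$ with $a,b\models p$, and any $\tau\in\aut(p(\CM))$ sending $a$ to $b$ realises it.

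For the containment $\autf_{\fix}(p(\CM))\subseteq\ker(\psi^*_p)$, I would argue as follows. Let $\tau\in\autf_{\fix}(p(\CM))$ and let $\a$ be an enumeration of $p(\CM)$ witnessing $\a\equiv^L\tau(\a)$. Pick any $a\models p$; since $a$ is a subtuple of $\a$, we have $a\equiv^L\tau(a)$, so by Fact \ref{Lascar_zero}, $[a,\tau(a)]=0$, i.e.\ $\psi^*_p(\tau)=0$. Because $\autf_{\fix}(p(\CM))$ is a normal subgroup of $\aut(p(\CM))$, the map $\psi^*_p$ factors through the quotient $\gall^{\fix}(p)=\aut(p(\CM))/\autf_{\fix}(p(\CM))$, producing the desired epimorphism $\Psi^*_p\colon\gall^{\fix}(p)\to H_1^*(p)$.

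The statement is essentially a bookkeeping corollary of the material already developed in Section \ref{lascargptoh1}, so there is no real obstacle; the only point requiring a moment of care is the passage from a Lascar-equivalent enumeration of all of $p(\CM)$ to Lascar-equivalence of a single realisation $a$, which is handled by viewing $a$ as a subtuple and invoking Fact \ref{Lascar_zero}.
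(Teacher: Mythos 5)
Your proof is correct and follows exactly the route the paper intends: the map $\psi^*_p$ is the factorisation through $\aut(p(\CM))$ of the map $\varphi^*_p$ from Theorem \ref{canonical_epi}, and the containment $\autf_{\fix}(p(\CM))\subseteq\ker(\psi^*_p)$ is read off from Fact \ref{Lascar_zero} by passing from a Lascar-equivalent enumeration of $p(\CM)$ to a Lascar-equivalent single realisation. The only detail worth noting is that the action of $\tau_1$ on $[a,\tau_2(a)]$ used in the homomorphism computation should be interpreted via any $\sigma_1\in\aut(\CM)$ restricting to $\tau_1$, but this is immediate from the definition of $\aut(p(\CM))$.
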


\begin{remark}\label{kernel_canonical_epimorphism}
Note that   $\aut(p(\CM))/\ker(\psi^*_p)$ is  isomorphic to $H^*_1(p)$, which is  independent from the choice of the monster model.
Since $H^*_1(p)$ is abelian, $\ker(\psi^*_p)$ contains the derived subgroup of $\aut(p(\CM))$.
We shall figure out what $\ker(\psi^*_p)$ is, and it will turn out that even the kernel (so $H^*_1(p)$ too) is 
independent from the choice of $\indo^*$.
\end{remark}

\section{Characterization of the first homology groups}\label{characterization}

The goal of this section is to identify what $\ker(\psi^*_p)$ is.
In \cite{KKL},\cite{KL},    the $2$-chains in $p$ (in the sense of thorn-independence)
with $1$-shell boundaries
are classified when $T$ is rosy.
However, again, the only properties
of  thorn-forking used there are {\em the basic 5 axioms}: finite character, normality, symmetry, transitivity, and extension. Therefore, the same conclusion
can be  obtained in our context of $*$-independence in any $T$. 

In particular we obtain the following from \cite[3.14]{KKL}:

\begin{remark}\label{chain-walk}
Let $s=f_{01}+f_{12}-f_{02}$ be a $1$-$*$-shell with $\supp(f_{ij})=\{i,j\}$. Then $s$ is the boundary of some 2-$*$-chain in $p$ iff $s$ is the boundary of some 2-$*$-chain $$\alpha=\sum_{i=0}^{2n}\limits (-1)^i a_i$$ with 2-$*$-simplicies $a_i$, which is a {\em chain-walk} from $f_{01}$ to $f_{12}$. We call the 2-$*$-chain $\alpha$ a {\em chain-walk} from $f_{01}$ to $f_{12}$ if, 
\be\item there are non-zero numbers $k_0,\ldots,k_{2n+1}$ (not necessarily distinct) such that   $k_0=k_{2n}=1$, $k_{2n+1}=2$, and for $ i\leq 2n$, $\supp(a_i)=\{0,k_i,k_{i+1}\}$;
        \item  $\Bd^2  a_0 = f_{01}$, $\Bd^2  a_{2n}= f_{12}$; and
        \item for $0 \le i < n$,
        $$\Bd^0 a_{2i}= \Bd^0 a_{2i+1}, \  \Bd^2  a_{2i+1}=\Bd^2 a_{2i+2}.$$
        \ee
\end{remark}    
\noindent Note that actually in \cite[3.14]{KKL}, it is given as a chain-walk from $f_{01}$ to $-f_{02}$ but the same proof gives a chain-walk from $f_{01}$ to $f_{12}$.
 
Now due to the fact that $\Bd(\alpha)=s$ and $\alpha$ is a chain-walk, we can directly obtain the following fact.

%
%
%

\begin{theorem}\label{improved_classification}
A $1$-$*$-shell $s$  in $p$ is the boundary of a $2$-chain if and only if there is a representation $(a,b,c,a')$ of $s$ such that
for some $n\geq 0$ there is a finite sequence $(d_i)_{0\le i\le 2n+2}$ of realizations of $p$ satisfying the following conditions:
\be
	\item $d_0=a$, $d_{2n+1}=c$ and $d_{2n+2}=a'$;
	\item $\{d_{j},d_{j+1},b\}$ is $*$-independent for each $0\le j\le 2n+1$; and
	\item there is a bijection $$\sigma:\{0,1,\ldots,n\}\to\{0,1,\ldots,n\}$$ such that
	$d_{2i} d_{2i+1}\equiv d_{2\sigma(i)+2}d_{2\sigma(i)+1}$ for $0\le i \le n$.
\ee
\end{theorem}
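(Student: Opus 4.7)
The strategy is to derive Theorem \ref{improved_classification} as a direct reformulation of Remark \ref{chain-walk}: a $1$-shell $s$ bounds a $2$-chain iff it bounds some chain-walk $\alpha = \sum_{i=0}^{2n}(-1)^i a_i$ from $f_{01}$ to $f_{12}$. What remains is a combinatorial translation between the data of a chain-walk and the data of a sequence $(d_j)_{0 \le j \le 2n+2}$ together with a bijection $\sigma$ of the sort described in the theorem.

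For the ``only if'' direction, suppose $s$ bounds a $2$-chain and invoke Remark \ref{chain-walk} to obtain a chain-walk $\alpha$ with $\partial\alpha = s$. Each $2$-simplex $a_i$ has support $\{0, k_i, k_{i+1}\}$; its vertex at the fixed position $0$ will be identified with the ``middle'' element $b$ of the representation of $s$, and the remaining two vertices will be read off as consecutive entries $(d_{2i}, d_{2i+1})$, with the labeling arranged so that $d_0 = a$, $d_{2n+1} = c$, and $d_{2n+2} = a'$ (the endpoint pair coming from $\partial^2 a_0 = f_{01}$ and the terminal face of $a_{2n}$). Condition $(2)$, i.e.\ that $\{d_j, d_{j+1}, b\}$ is $*$-independent, is immediate from the fact that each $2$-simplex of $\alpha$ is a $*$-independent functor in $p$. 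The gluing conditions $\partial^0 a_{2i} = \partial^0 a_{2i+1}$ and $\partial^2 a_{2i+1} = \partial^2 a_{2i+2}$ pair each even-indexed simplex with an adjacent odd-indexed one; combining these matchings yields a permutation $\sigma$ on $\{0, \ldots, n\}$, and the induced type-equivalence $d_{2i} d_{2i+1} \equiv d_{2\sigma(i)+2} d_{2\sigma(i)+1}$ is condition $(3)$.

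For the ``if'' direction, given $(d_j)$ and $\sigma$ satisfying the conditions, I will build a chain-walk. For each $i = 0, \ldots, n$, form a $2$-simplex $a_{2i}$ on a suitable support $\{0, k, k'\}$ with vertices $(b, d_{2i}, d_{2i+1})$; condition $(2)$ ensures this is a valid $*$-independent simplex in $p$. For each $i = 0, \ldots, n-1$, use the type-equivalence $d_{2i} d_{2i+1} \equiv d_{2\sigma(i)+2} d_{2\sigma(i)+1}$ from condition $(3)$, together with an automorphism and the extension axiom of $\indo^*$ (to pull $b$ into an independent position over the relevant vertices), to construct the odd-indexed simplex $a_{2i+1}$ that glues $a_{2i}$ to $a_{2\sigma(i)+2}$ along the faces required by the chain-walk's conditions. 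Setting $\alpha := \sum_{i=0}^{2n}(-1)^i a_i$ produces a chain-walk whose boundary is a $1$-shell with endpoint pair $(d_0, d_{2n+2}) = (a, a')$; by Theorem \ref{endpt_class} this $1$-shell lies in the same homology class as $s$, so $s$ itself bounds a $2$-chain.

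The main obstacle is the combinatorial bookkeeping: tracking the support indices $k_i$ of the chain-walk against the linear indexing of the $d_j$'s, and verifying that the permutation $\sigma$ encodes precisely the gluing pattern obtained by juxtaposing the even-to-odd and odd-to-next-even matchings. Once this correspondence is pinned down, each verification (both the type-equivalences in the forward direction and the construction of the $a_{2i+1}$'s in the reverse) reduces to routine applications of the basic $5$ axioms on $\indo^*$ and $B$-invariance of types.
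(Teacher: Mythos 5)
Your overall strategy — reduce to chain-walks via Remark \ref{chain-walk} and then translate between chain-walk data and the sequence-plus-bijection data — is the same strategy the paper uses. However, you misidentify where the bijection $\sigma$ comes from, and this is the technically substantive content of the proof.

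In a chain-walk $\alpha=\sum_{i=0}^{2n}(-1)^i a_i$, the gluing conditions $\Bd^0 a_{2i}=\Bd^0 a_{2i+1}$ and $\Bd^2 a_{2i+1}=\Bd^2 a_{2i+2}$ only make the $\Bd^0$- and $\Bd^2$-faces telescope, leaving $\Bd^0 a_{2n}=f_{12}$ and $\Bd^2 a_0=f_{01}$. They say nothing about the middle faces $\Bd^1 a_i$, which have supports of the form $\{0,k\}$ with $k>1$ and which a priori all survive in $\Bd\alpha$. For $\Bd\alpha$ to equal $s=f_{01}+f_{12}-f_{02}$, the $n+1$ positively-signed faces $\Bd^1 a_{2i}$ (with vertex pairs $(d_{2i},d_{2i+1})$) must cancel against the $n$ negatively-signed faces $\Bd^1 a_{2j+1}$ (with vertex pairs $(d_{2j+2},d_{2j+1})$), leaving exactly one even face equal to $f_{02}$. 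It is precisely this cancellation pattern among the $\Bd^1$-faces that produces the bijection $\sigma$ (together with the index $i_0$ of the surviving face), \emph{not} the adjacency gluing. When you say the gluing conditions ``pair each even-indexed simplex with an adjacent odd-indexed one; combining these matchings yields a permutation,'' you only get the linear chain $a_0,a_1,\ldots,a_{2n}$, which carries no permutation content.

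The same confusion infects your converse direction: you say to build ``the odd-indexed simplex $a_{2i+1}$ that glues $a_{2i}$ to $a_{2\sigma(i)+2}$.'' But a chain-walk must glue $a_{2i+1}$ between $a_{2i}$ and $a_{2i+2}$, in index order; $\sigma$ has no role in the gluing. What the paper actually does is build the chain-walk linearly from the subsequence $(d_i)_{0\le i\le 2n+1}$; the gluing then produces boundary $f_{01}-\Bd^1 a_{2i_0}+f_{12}$ once the $\Bd^1$-cancellations dictated by $\sigma$ are arranged (using automorphisms to upgrade the type-equivalences in condition (3) to equalities of functors), and finally the type-equivalence $d_{2i_0}d_{2i_0+1}\equiv a'c$ is used to make $\Bd^1 a_{2i_0}=f_{02}$. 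Two smaller issues: the common vertex $b$ sits at support index $1$ (the middle slot), not at index $0$ as you state — index $0$ hosts the $d_{2i}$'s, which change along the walk; and your appeal to Theorem \ref{endpt_class} at the end of the converse is a legitimate alternative shortcut, but the paper instead closes by directly making $a_{2i_0}\restriction\{0,2\}=f_{02}$ so that $\Bd\alpha=s$.
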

\begin{proof}
Let $s$ be a $1$-shell with $\supp(s)=\{0,1,2\}$ in $p$, which  is the boundary of a $2$-chain.
Then, by Remark \ref{chain-walk}, we have a 2-$*$-chain-walk $\sum_{i=0}^{2n}\limits (-1)^i a_i$ from $f_{01}$ to $f_{12}$ with the boundary $s$. Then there are $d_0,d_1,\ldots,d_{2n+1},b\models p$ such that
\be

	\item $\{d_{j},d_{j+1},b\}$ is $*$-independent for each $0\le j\le 2n$;
	\item $a_{2i}(\{0,1,2\})\equiv [d_{2i}bd_{2i+1}]_{\{0,1,2\}}$ and $a_{2i+1}(\{0,1,2,\})\equiv [d_{2i+2}bd_{2i+1}]_{\{0,1,2\}}$ for each $0\le i\le n-1$.
	\item $f_{02}(\{0,2 \})\equiv [d_{2i_0}d_{2i_0+1}]_{\{0,2\}}$ for some $0\le i_0\le n-1$.

\ee

\noindent Take a tuple $(d_0,b,d_{2n+1},d_{2i_0})$, which is a representation of $s$. Since $\alpha$ is a 2-$*$-chain-walk, there is a bijection $$\sigma:\{0,1,\cdots,n\}\smallsetminus \{i_0\}\to\{0,1,\cdots,n-1\}$$ such that $d_{2i} d_{2i+1}\equiv d_{2\sigma(i)+2}d_{2\sigma(i)+1}$ for $0\le i\neq i_0 \le n$. Set $\sigma':=\sigma\cup \{(i_0,n)\}$ and $d_{2n+2}:=d_{2i_0}$. We get a desired result from the bijection $\sigma'$ and the sequence $(d_0,d_1,\ldots,d_{2n+1},d_{2n+2})$.
%

Conversely, we assume that there are a representation $(a,b,c,a')$, a finite sequence $(d_i)_{0\le i\le 2n+2}$ of realizations of
$p$, and a bijection $\sigma$ on $\{0,1,\ldots, n\}$ for some $n\geq 0$ such that
\be
	\item $d_0=a$, $d_{2n+1}=c$ and $d_{2n+2}=a'$;
	\item $\{d_{j},d_{j+1},b\}$ is $*$-independent for each $0\le j\le 2n+1$; and
	\item $d_{2i} d_{2i+1}\equiv d_{2\sigma(i)+2}d_{2\sigma(i)+1}$ for $0\le i \le n$.
\ee
Put $i_0=\sigma^{-1}(n)$. From the subsequence $(d_i)_{0\le i\le 2n+1}$, we get a 2-$*$-chain-walk $\alpha=\sum_{i=0}^{2n}\limits (-1)^i a_i$ from $f_{01}$ to $f_{12}$ with the boundary $f_{01}-a_{2i_0}\restriction \{0,2\}+f_{12}$. Since $d_{2i_0}d_{2i_0+1}\equiv d_{2n+2}d_{2n+1}=a'c$, we can make $a_{2i_0}\restriction \{0,2\}=f_{02}$ so  that $\Bd \alpha=s$. 

\end{proof}

\begin{corollary}\label{endpt_classification}
For $a,a'\models p$, $[a,a']=0$ in $H_1^*(p)$ if and only if for some $n\geq 0$ there is a finite sequence $(d_i)_{0\le i\le 2n+2}$ of realizations of $p$ satisfying the following conditions:
\be
	\item $d_0=a$, and $d_{2n+2}=a'$;
	\item $\{d_{j},d_{j+1}\}$ is $*$-independent for each $0\le j\le 2n+1$; and
	\item there is a bijection $$\sigma:\{0,1,\ldots,n\}\to\{0,1,\ldots,n\}$$ such that
	$d_{2i} d_{2i+1}\equiv d_{2\sigma(i)+2}d_{2\sigma(i)+1}$ for $0\le i \le n$.
\ee
\end{corollary}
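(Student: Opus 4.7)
The plan is to derive this corollary directly from Theorem \ref{improved_classification}, reinterpreting ``$[a,a']=0$'' via the identification $\CE^* = H_1^*(p)$ of Remark \ref{summary_H1_representation} and then accounting for the auxiliary vertex $b$ that appears in representations of $1$-shells but not in the statement of the corollary.

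For the forward direction, I would argue that $[a,a']=0$ in $H_1^*(p)$ means, by Remark \ref{summary_H1_representation}, that some $1$-shell $s$ admitting a representation $(a,b,c,a')$ is the boundary of a $2$-chain. Applying Theorem \ref{improved_classification} to $s$ produces a sequence $(d_i)_{0\le i\le 2n+2}$ with $d_0=a$, $d_{2n+1}=c$, $d_{2n+2}=a'$, a matching bijection $\sigma$, and triples $\{d_j,d_{j+1},b\}$ that are $*$-independent. Monotonicity (a consequence of the basic $5$ axioms) yields the weaker pairwise condition $d_j\indo^* d_{j+1}$ required by the corollary, while the bijection condition carries over verbatim.

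For the backward direction, given a sequence $(d_i)$ satisfying conditions (1)--(3), the idea is to manufacture a representation $(a,b,c,a')$ so that Theorem \ref{improved_classification} can be applied in reverse. Using the extension axiom, I would pick $b\models p$ with $b\indo^* \bigcup_i d_i$; combined with the hypothesis $d_j\indo^* d_{j+1}$, this yields $*$-independence of each triple $\{d_j,d_{j+1},b\}$ (via symmetry and transitivity). Setting $c := d_{2n+1}$, the pairs $\{a,b\}$, $\{b,c\}$, $\{a',c\}$ are each $*$-independent, so the data $(a,b,c,a')$ is indeed a genuine representation of some $1$-shell $s$. Theorem \ref{improved_classification} then gives that $s$ is the boundary of a $2$-chain, and Remark \ref{summary_H1_representation} concludes $[a,a']=0$ in $H_1^*(p)$.

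No real obstacle is expected: the proof is essentially a clean translation between the ``$1$-shell plus representation'' language of Theorem \ref{improved_classification} and the ``endpoint pair only'' language of the corollary, with the extension axiom supplying the generic vertex $b$ whenever needed. The payoff is an intrinsic characterization of triviality in $H_1^*(p)$ phrased purely in terms of finite chains of pairwise $*$-independent realizations of $p$, with no reference to the ambient simplicial data.
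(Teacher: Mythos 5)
Your proof is correct and follows the paper's own argument almost verbatim: the forward direction reads off from Theorem \ref{improved_classification} after dropping the auxiliary vertex, and the backward direction reinstates a generic $b\models p$ via the extension axiom before applying the same theorem. The only difference is that you spell out the hypothesis-checking a bit more explicitly than the paper does.
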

\begin{proof}
Fix $a,a'\models p$. The left-to-right direction is clear from Theorem \ref{improved_classification}. For the right-to-left direction, we assume that there is a finite sequence $(d_i)_{0\le i\le 2n+2}$ of realizations of $p$ satisfying the following conditions:
\be
	\item $d_0=a$, and $d_{2n+2}=a'$;
	\item $\{d_{j},d_{j+1}\}$ is $*$-independent for each $0\le j\le 2n+1$; and
	\item there is a bijection $$\sigma:\{0,1,\ldots,n\}\to\{0,1,\ldots,n\}$$ such that
	$d_{2i} d_{2i+1}\equiv d_{2\sigma(i)+2}d_{2\sigma(i)+1}$ for $0\le i \le n$.
\ee
Take $b\models p$ such that $b\indo^* d_0d_1\ldots d_{2n+2}$. Then the tuple $(a,b,d_{2n+1},a')$ and the sequence $(d_i)_{0\le i\le 2n+2}$ gives a 1-$*$-shell which is a boundary of a 2-$*$-chain by Theorem \ref{improved_classification}.
\end{proof}
By now, as promised, we can identify $\ker(\psi^*_p)$.

\begin{Theorem}\label{kernel_derived}
For each $h\in K:=\ker(\psi^*_p)$ and $a\models p$, there is an automorphism $h'$  in the derived subgroup
$G'$ of $G:=\aut(p(\CM))$ such that $h(a)=h'(a)$. Thus,
$K(\geq G')$ is the normal subgroup of $G$ consisting of all automorphisms fixing all
orbits of elements of $p(\CM)$ under the action of $G'$, and $H^*_1(p)=G/K$.
\end{Theorem}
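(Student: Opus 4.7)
The plan is to prove the theorem in three steps: (i) the inclusion $G' \subseteq K$ (already noted in Remark \ref{kernel_canonical_epimorphism}, as $H^*_1(p)$ is abelian); (ii) the main construction, showing that for any $h \in K$ and $a \models p$ there exists $h' \in G'$ with $h'(a) = h(a)$; and (iii) deducing from (i) and (ii) that $K$ coincides with the normal subgroup of $G$ consisting of those automorphisms which preserve every $G'$-orbit on $p(\CM)$, and that $H^*_1(p) = G/K$. Step (iii) is formal: one uses that $\psi_p^*(h) = [a, h(a)]$ does not depend on the choice of $a \models p$ (Fact \ref{canonical_epimorphism} and Theorem \ref{canonical_epi}(2)) to argue $h \in K$ iff $h(a) \in G' \cdot a$ for some (equivalently every) $a$, and $H^*_1(p) = G/K$ is immediate from surjectivity of $\psi_p^*$.

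The heart of the proof is step (ii). Given $h \in K$ and $a \models p$, Corollary \ref{endpt_classification} (applied to $[a, h(a)] = \psi_p^*(h) = 0$) provides a finite sequence $(d_i)_{0 \le i \le 2n+2}$ of realizations of $p$ with $d_0 = a$, $d_{2n+2} = h(a)$, adjacent pairs $\{d_j, d_{j+1}\}$ that are $*$-independent, and a bijection $\sigma$ on $\{0, \ldots, n\}$ such that $d_{2i}d_{2i+1} \equiv d_{2\sigma(i)+2}d_{2\sigma(i)+1}$. My construction of $h'$ rests on the following basic commutator identity: if $x_0 y \equiv x_1 y$ with $x_0, x_1, y \models p$, and we pick $\alpha \in G$ with $\alpha(x_0) = x_1$, $\alpha(y) = y$, along with $\beta \in G$ satisfying $\beta(y) = x_0$, then the commutator $[\alpha, \beta] \in G'$ satisfies $[\alpha, \beta](x_0) = x_1$ (verified by the chase $x_0 \mapsto y \mapsto y \mapsto x_0 \mapsto x_1$ under $\beta^{-1}, \alpha^{-1}, \beta, \alpha$). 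For $n = 0$, where necessarily $\sigma = \id$, this identity applied to $(d_0, d_2, d_1)$ yields $h'$ directly.

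For $n > 0$, I proceed by induction. When $\sigma(n) = n$, the last equivalence $d_{2n}d_{2n+1} \equiv d_{2n+2}d_{2n+1}$ is of the basic-identity form and produces a commutator $c \in G'$ with $c(d_{2n}) = d_{2n+2}$. Moreover $\sigma$ then restricts to a bijection on $\{0, \ldots, n-1\}$, so the initial subsequence $(d_i)_{0 \le i \le 2n}$ satisfies all the hypotheses of Corollary \ref{endpt_classification} with parameter $n-1$; the inductive hypothesis yields $h'' \in G'$ with $h''(d_0) = d_{2n}$, and $h' := c h''$ works. When $\sigma(n) \neq n$, one rearranges: by decomposing $\sigma$ into cycles and processing the cycle containing $n$ via a succession of applications of the basic identity, inserting fresh auxiliary realizations $*$-independent from the accumulated data (using the extension axiom), one reduces to the previous case.

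I expect the main obstacle to lie precisely in the cycle-handling at the inductive step when $\sigma(n) \neq n$. The difficulty is to choose the auxiliary automorphisms and witnessing realizations so that the resulting commutator factors compose correctly on $d_0$ to produce $h(a)$, while each factor individually lies in $G'$, without cross-interference among the factors spoiling the pair equivalences along the cycle. Once this combinatorial bookkeeping is carried out, the induction closes, after which the remaining steps are essentially formal.
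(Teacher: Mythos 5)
Your overall architecture is right: steps (i) and (iii) are formal and match the paper, and invoking Corollary \ref{endpt_classification} to obtain the sequence $(d_i)_{0\le i\le 2n+2}$ and the permutation $\sigma$ is exactly the right starting point. Your ``basic commutator identity'' is also a correct and useful observation, and it does settle the base case $n=0$ and the case $\sigma(n)=n$.

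But you have left a genuine gap exactly where you predicted one: the case $\sigma(n)\neq n$. Your basic identity requires an equivalence of the shape $x_0 y\equiv x_1 y$ --- same middle element on both sides --- whereas for $\sigma(n)=m\neq n$ the given equivalence is $d_{2n}d_{2n+1}\equiv d_{2m+2}d_{2m+1}$, with different middle elements. The ``insert fresh auxiliary realizations and process the cycle'' step is a sketch of an intention, not an argument: it is unclear how the auxiliary points should be chosen so that the resulting chain of commutators composes correctly on $d_0$ while keeping every factor in $G'$, and the very fact that you expect the difficulty to lie here means the proof is not done. The paper avoids cycle decomposition altogether, by a cleaner trick that you should compare with yours. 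Choose automorphisms $\eta_i\colon d_{2i}\mapsto d_{2i+1}$, $\eta'_j\colon d_{2j+1}\mapsto d_{2j+2}$, and $f_i\colon d_{2i}d_{2i+1}\mapsto d_{2\sigma(i)+2}d_{2\sigma(i)+1}$ (all in $G$, existing by (3) of the corollary). The composition $\eta'_n\eta_n\cdots\eta'_0\eta_0$ sends $d_0$ to $d_{2n+2}=h(a)$. One then checks the pointwise identity $\eta'_j(d_{2j+1})=f_{\sigma^{-1}(j)}\,\eta_{\sigma^{-1}(j)}^{-1}\,f_{\sigma^{-1}(j)}^{-1}(d_{2j+1})$, so substituting these conjugates for the $\eta'_j$ gives a $g_0\in G$ that still satisfies $g_0(a)=h(a)$. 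Now comes the key point you are missing: since $G/G'$ is abelian, conjugation is invisible modulo $G'$, so $g_0G'=\bigl(\prod_i \eta_{\sigma^{-1}(i)}^{-1}\eta_i\bigr)G'$, and since $\sigma$ is a permutation of $\{0,\dots,n\}$, in the abelian quotient the $\eta_i$ and $\eta_i^{-1}$ cancel completely, giving $g_0\in G'$. No induction on $n$, no cycle decomposition, no auxiliary realizations. You should replace your inductive strategy for step (ii) by this direct cancellation-in-the-abelianization argument; as written, your proof is incomplete.
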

\begin{proof}
If the first statement is true then the second statement clearly  follows  since
$G'\leq K=\ker(\psi^*_p)$
(as $G/K\cong H^*_1(p)$ is abelian). So let us prove the first statement.

Let $h\in K$ and $a\models p$. Thus $[a,h(a)]=0$ (in $\CE^*=H^*_1(p)$). By Corollary \ref{endpt_classification}, there are an integer $n\ge 0$ and a finite sequence $(d_i)_{0\le i\le 2n+2}$ of realizations of $p$ such that
\be
	\item $d_0=a$, and $d_{2n+2}=h(a)$;
	\item $\{d_{j},d_{j+1}\}$ is $*$-independent for each $0\le j\le 2n+1$; and
	\item there is a bijection $$\sigma:\{0,1,\ldots,n\}\to\{0,1,\ldots,n\}$$ such that
	$d_{2i} d_{2i+1}\equiv d_{2\sigma(i)+2}d_{2\sigma(i)+1}$ for $0\le i \le n$.
\ee

By (3), we have the following automorphisms in $G$:
\begin{itemize}
	\item $\eta_i:d_{2i}\mapsto d_{2i+1}$ for $0\le i \le n$;
	\item $\eta'_j:d_{2j+1}\mapsto d_{2j+2}$ for $0\le j\le n$; and
	\item $f_i:d_{2i}d_{2i+1}\mapsto d_{2\sigma(i)+2}d_{2\sigma(i)+1}$ for $0\le i \le n$.
\end{itemize}
Thus for $0\leq i\leq n$,  $\eta_{i}\eta'_{i-1}\eta_{i-1}\cdots\eta'_0\eta_0(d_0)=d_{2i+1}$, $\eta'_{i}\eta_{i}\cdots\eta'_0\eta_0(d_0)=d_{2i+2}$,
 and $$\eta'_n\eta_{n}\eta'_{n-1}\eta_{n-1}\cdots\eta'_0\eta_0(d_0)=d_{2n+2}=h(a) \ \ (\dagger).$$ Moreover, $\eta'_{\sigma(i)}(d_{2\sigma(i)+1})=f_i\eta_i^{-1} f_i^{-1}(d_{2\sigma(i)+1})$ for $0\le i\le n$, so  $\eta'_{i}(d_{2i+1})=f_{\sigma^{-1}(i)}\eta_{\sigma^{-1}(i)}^{-1} f_{\sigma^{-1}(i)}^{-1}(d_{2i+1})$ \ \ ($\ddagger$).
 From $(\dagger)$ and $(\ddagger)$, we get that $g_0(a)=h(a)$, where
  $$g_0:=(f_{\sigma^{-1}(n)}\eta_{\sigma^{-1}(n)}^{-1}f_{\sigma^{-1}(n)}^{-1})\eta_n\cdots (f_{\sigma^{-1}(0)}\eta_{\sigma^{-1}(0)}^{-1}f_{\sigma^{-1}(0)}^{-1})\eta_0.$$
We claim that $g_0\in G'$: Since $G/G'$ is abelian (so $gkG'=kgG'$), we get that $g_0 G'=g_1 G'$, where
$$g_1:=\eta_{\sigma^{-1}(n)}^{-1}\eta_n\cdots \eta_{\sigma^{-1}(0)}^{-1}\eta_0. $$
Moreover, since $\sigma$ is a permutation of $\{0,1,\ldots,n\}$, it follows
(using again that $G/G'$ is abelian) that $g_1 G'=1_G G'=G'$, so  $g_1,g_0\in G'$.

\end{proof}


\begin{remark}
Due to above Theorem \ref{kernel_derived}, $H^*_1(p)$, which of course does not depend on the
choice of a monster model, is always the same regardless of our choice of independence
$\indo^*$ satisfying the 5 basic axioms. 
Hence, we can denote it simply by $H_1(p)$. Moreover, $H_1(p)$ can also be considered as a quotient group of
$\gall(T)$ or $\gall^{\fix}(p)$,  which equivalently endow $H_1(p)$ with a topological group structure. 
%
\end{remark}

\begin{remark}\label{h1_stp_model}
If $p$ is a strong type of a model, then for $G:=\gall(T)$,  $G\cong \gall^{\fix,1}(p)=\gall^{\fix}(p)$ and $H_1(p)\cong G/G'$ as topological groups.
\end{remark}
\begin{proof}
Suppose $p$ is a strong type of a small submodel $M$ of $\CM$. By Remark \ref{modelsol.}, $\autf^1_{\fix}(p(\CM))=\autf_{\fix}(p(\CM))=\autf_{\res}(p(\CM))$. 
It remains to show that $\gall^{\fix,1}(p(\CM))\cong \gall(T)$. Consider the
projection $\pi_{\fix,1} : \aut(\CM)\rightarrow \gall^{\fix,1}(p(\CM))$ sending $\sigma$ 
to $(\sigma \restriction p(\CM))\autf_{\fix}^1( p(\CM) )$. Suppose $\sigma$ is in the kernel 
of $\pi_{\fix,1}$. Then $\sigma(M)\equiv^L M$, and there is $\tau\in \autf(\CM)$ such that 
$\sigma\restriction M=\tau \restriction M$. Thus $\tau^{-1}\circ \sigma \restriction M=\id_M$. 
So $\tau^{-1}\circ \sigma \in \autf(\CM)$ and $\sigma \in \autf(\CM)$. Therefore, the kernel of
$\pi_{\fix,1}$ is a subgroup of $\autf(\CM)$. Also, it is easy to check that $\autf(\CM)$ is a
subgroup of the kernel of $\pi_{\fix, 1}$. Thus we have that
$\gall(T)\cong \gall^{\fix,1}(p(\CM))$, witnessed by the 
isomorphism induced from $\pi_{\fix,1}$ (actually this is an isomorphism of topological groups).

Now let $\varphi_p : \aut(\CM)\rightarrow H_1(p)$ be the epimorphism sending
$\sigma\in \aut(\CM)$ to $[M,\sigma(M)]$, as defined in \ref{canonical_epi}. It is enough to show that the kernel of
$\varphi_p$ is generated by the automorphisms in $\aut(\CM)'\cup \autf(\CM)$, so
$\ker(\varphi_p)=\aut(\CM)'\autf(\CM)$. It is clear from Theorem \ref{kernel_derived} that
$\aut(\CM)'\autf(\CM)\subseteq \ker(\varphi_p)$. Conversely, 
suppose $\sigma\in \ker(\varphi_p)$. By Theorem \ref{kernel_derived}, there is
$\tau\in\aut(\CM)'$ such that $\sigma\restriction M=\tau \restriction M$.
So $(\tau^{-1}\circ\sigma) \restriction M=\id_M$ and $\tau^{-1}\circ\sigma\in \autf(\CM)$.
Thus, $\sigma \in \tau\autf(\CM)\subseteq \aut(\CM)'\autf(\CM)$. Therefore $H_1(p)\cong G/G'$,
and furthermore they are homeomorphic as topological groups.
\end{proof}

As a corollary to Theorem \ref{kernel_derived}, we get the following characterization
of the equality of strong types and Lascar strong types, {\em for any theory}:
\begin{corollary}\label{equality_stp_Lstp} The following conditions are equivalent :
\begin{enumerate}
	\item $p$, a strong type  of an algebraically closed tuple, is a Lascar strong type;
	\item $\gall^{\fix,1}(p)$ is abelian and $H_1(p)=0$; and
	\item Both $\gall^{\fix,1}(p)$ and $H_1(p)$ are trivial.
\end{enumerate}
In particular, if $\gall^{\fix}(p)$ is abelian, then $p$ is a Lascar strong type if and only if $H_1(p)=0$.
Moreover, if $p$ is a strong type of a model, then $p$ is a Lascar strong type if and only if $\gall(T)$ is abelian
and $H_1(p)=0$.
\end{corollary}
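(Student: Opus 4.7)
The plan is to establish $(1)\Rightarrow (3)\Rightarrow (2)\Rightarrow (1)$ for the three-way equivalence, and then derive the two ``in particular / moreover'' statements as consequences using Remarks \ref{h1_stp_model} and the relationship between $\gall^{\fix}(p)$ and $\gall^{\fix,1}(p)$.

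For $(1)\Rightarrow(3)$, assume $p$ is a Lascar strong type, so any two realizations of $p$ are Lascar equivalent. Given any $\sigma \in \aut(\CM)$ and $a \models p$, we have $\sigma(a)\equiv^L a$, and hence $\sigma\restriction p(\CM) \in \autf_{\fix}^1(p(\CM))$. This forces $\gall^{\fix,1}(p)$ to be trivial. Triviality of $H_1(p)$ follows directly from Fact \ref{Lascar_zero} (or equivalently, from $[a,\sigma(a)]=0$ via the epimorphism $\varphi^*_p$ of Theorem \ref{canonical_epi}). The implication $(3)\Rightarrow (2)$ is obvious.

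The heart of the proof is $(2)\Rightarrow(1)$, and here the characterization of $\ker(\psi^*_p)$ in Theorem \ref{kernel_derived} does essentially all of the work. Write $G := \aut(p(\CM))$ with derived subgroup $G'$. Assuming $H_1(p) = 0$, Theorem \ref{kernel_derived} tells us that $G = \ker(\psi^*_p)$, so for every $\sigma \in G$ and every $a \models p$ there is some $\tau \in G'$ with $\sigma(a)=\tau(a)$. Now feed in the second hypothesis: since $\gall^{\fix,1}(p) = G/\autf_{\fix}^1(p(\CM))$ is abelian, $G' \leq \autf_{\fix}^1(p(\CM))$. Therefore $\tau(a)\equiv^L a$, hence $\sigma(a)\equiv^L a$. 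As $a$ and $\sigma$ were arbitrary, any two realizations of $p$ are Lascar equivalent, which is exactly saying that $p$ is a Lascar strong type.

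For the ``in particular'' clause, note that since $\autf_{\fix}(p(\CM)) \leq \autf_{\fix}^1(p(\CM))$, there is a canonical projection $\gall^{\fix}(p) \twoheadrightarrow \gall^{\fix,1}(p)$; if the former is abelian, so is the latter, and the main equivalence applies. For the ``moreover'' clause about a strong type of a model, Remark \ref{h1_stp_model} gives $\gall(T) \cong \gall^{\fix,1}(p)$; if $p$ is Lascar strong then $\gall^{\fix,1}(p)$ is trivial, hence $\gall(T)$ is trivial (a fortiori abelian) and $H_1(p)=0$; conversely, abelianness of $\gall(T)$ transfers to $\gall^{\fix,1}(p)$, and the main equivalence concludes. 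The only subtlety, which is not really an obstacle but where care is required, is keeping the indices straight among $\gall^{\fix}(p)$, $\gall^{\fix,1}(p)$, and $\autf_{\res}(p(\CM))$ and invoking the correct inclusions (e.g.\ the containment $G' \leq \autf_{\fix}^1$ rather than $\autf_{\fix}$ is exactly what makes the single-realization version of the relativised Lascar group the relevant object).
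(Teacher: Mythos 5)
Your proof is correct and follows essentially the same route as the paper: the easy implications come straight from the definitions and Fact~\ref{Lascar_zero}, and the substantive implication $(2)\Rightarrow(1)$ is carried by Theorem~\ref{kernel_derived} together with the observation that abelianness of $\gall^{\fix,1}(p) = G/\autf_{\fix}^1(p(\CM))$ forces $G' \leq \autf_{\fix}^1(p(\CM))$, with the final two clauses reduced to Remark~\ref{h1_stp_model}. The only difference is cosmetic: you cycle $(1)\Rightarrow(3)\Rightarrow(2)\Rightarrow(1)$ and spell the reductions out in more detail, while the paper handles $(1)\Rightarrow(2),(3)$ and $(3)\Rightarrow(2)$ separately and compresses the rest.
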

\begin{proof}
The implication from (3) to (2) is trivial. The implications from (1) to (2) and (3) are easy. Suppose $p$ is a Lascar
strong type. Then, by the definition, $\gall^{\fix,1}(p)=0$ and $H_1(p)=0$.  It is enough to show (2) $\Rightarrow$ (1).
Suppose $\gall^{\fix,1}(p)$ is abelian and $H_1(p)=0$. It is enough to show that for any $a\in p(\CM)$ and any
$\sigma \in \aut(p(\CM))$, $a\equiv^L \sigma(a)$. Choose $a\in p(\CM)$ and $\sigma\in \aut(p(\CM))$ arbitrarily.
Since $H_1(p)=0$, there is $\tau\in \aut(p(\CM))'$ such that $\sigma(a)=\tau(a)$ by Theorem \ref{kernel_derived}.
The derived group of $\aut(p(\CM))$ is a subgroup of $\autf_{\fix}^1(p(\CM))$, because
$\gall^{\fix,1}(p)$ is abelian.
Therefore, we have $a\equiv^L \sigma(a)(=\tau(a))$.

The rest comes from Remark \ref{h1_stp_model}. 
\end{proof}

Next, we consider the orbit equivalence relation $\equiv^{H_1}$ on
$p(\CM)$ under the action of  $K$ (equivalently $G'$) in Theorem
\ref{kernel_derived} (i.e., for $a,b\models p$, $a\equiv^{H_1} b$ iff there is
$f\in K$ (or $\in G'$) such that $b=f(a)$ iff $[a,b]=0\in H_1(p)$). We show now 
that this equivalence relation is an $F_{\sigma}$-relation (in any theory); i.e., there are countably many  $B$-type-definable reflexive,
 symmetric relations $R_i(x,y)$ such that $$p(x)\wedge p(y)\models  x \equiv^{H_1} y\leftrightarrow \bigvee_{i<\omega} R_i(x,y):$$
Consider an invariant, symmetric, reflexive relation $R$ such that for
$\a,\b\in \CM$, $R(\a,\b)$ if and only if there are 
$\sigma,\tau\in\aut(\CM)$ such that $\b=[\sigma,\tau](\a)$, where 
$[\sigma,\tau]:=\sigma^{-1}\tau^{-1}\sigma\tau$, if and only if there are $c_1$, $c_2$, and $c_3$ such that $ac_3\equiv c_1 c_2$ and $c_2 c_3\equiv c_1 b$. Define $$R_i(\x,\y) \equiv \begin{cases}
\x=\y&\mbox{if}\ i=0\\
\overbrace{R\circ \cdots \circ R}^{i}(\x,\y)&\mbox{if}\ i\ge 1.
\end{cases}
$$
\noindent Then by Theorem \ref{kernel_derived}, $$p(x)\wedge p(y)\models  x \equiv^{H_1} y\leftrightarrow \bigvee_{i<\omega} R_i(x,y).$$
Next, define the {\em $H_1$-distance on} $p$ as follows: For $a,b\models p$,
$$
d_{H_1}(a,b):=
\begin{cases}
\min\{n|R_n(a,b)\} & \mbox{if } a\equiv^{H_1} b\\
\infty & \mbox{otherwise,}
\end{cases}
$$
and the $H_1$-diameter on $p$ by: $$d_{H_1}(p):=\max\{d_R(a,b)|\ a,b\models p\}.$$
Applying the Newelski's result from \cite{N} on the possible cardinality of the set of
classes of bounded invariant equivalence relations on a type,
we know that the cardinality of $H_1^*(p)$ is at least $2^{\aleph_0}$ if the equivalence
relation $\equiv^{H_1}$ on $p$ is not type-definable, and in the other case,
we also have that the possible cardinality of $H_1$ is one or at
least $2^{\aleph_0}$ by Appendix A.
\begin{Theorem}\label{cardinality_h1} For any theory $T$:
\begin{enumerate}
	\item The equivalence relation $\equiv^{H_1}$ on $p$ is type-definable if and only if $d_{H_1}(p)$ is finite.
	\item The cardinality of  $H_1(p)$ is one or $\ge 2^{\aleph_0}$.
\end{enumerate}

\end{Theorem}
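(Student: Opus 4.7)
For part (1), the plan is a standard compactness argument. If $d_{H_1}(p)=n<\infty$, then on $p(\CM)\times p(\CM)$ the relation $\equiv^{H_1}$ coincides with $R_n$, which is an $n$-fold composition of a type-definable symmetric reflexive relation and hence type-definable. Conversely, suppose $\equiv^{H_1}$ on $p$ is cut out by a partial type $\Pi(x,y)$. Since $R$ is reflexive, the $R_i$ form an increasing chain, so by the characterization $\equiv^{H_1}\,\Leftrightarrow\,\bigvee_i R_i$ established just before the theorem, the partial type
\[
p(x)\cup p(y)\cup \Pi(x,y)\cup\{\neg R_i(x,y):i<\omega\}
\]
is inconsistent. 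Compactness plus the monotonicity of the $R_i$'s then yields some $N<\omega$ with $p(x)\wedge p(y)\wedge \Pi(x,y)\models R_N(x,y)$, so $d_{H_1}(p)\leq N$.

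For part (2), I would first identify $|H_1(p)|$ with the number of $\equiv^{H_1}$-classes on $p(\CM)$. Fix $a_0\models p$ and consider $\Theta:p(\CM)\to H_1(p)$ given by $b\mapsto[a_0,b]$. By Remark \ref{summary_H1_representation}, any $[a,b]\in H_1(p)$ equals $[\sigma(a),\sigma(b)]=[a_0,\sigma(b)]$ for a suitable $\sigma\in\aut(\CM)$ with $\sigma(a)=a_0$, giving surjectivity; and $\Theta(b_1)=\Theta(b_2)$ iff $[b_2,b_1]=[a_0,b_1]-[a_0,b_2]=0$ iff $b_1\equiv^{H_1} b_2$. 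Thus $\Theta$ descends to a bijection $p(\CM)/\equiv^{H_1}\,\cong\, H_1(p)$.

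Now I split into cases according to whether $\equiv^{H_1}$ is type-definable on $p$. The relation $\equiv^{H_1}$ is always an invariant equivalence relation on the complete type $p$, and it is bounded, since as a quotient of $\gall(T)$ (via Theorem \ref{canonical_epi}) its number of classes is bounded by $|\gall(T)|$. If $\equiv^{H_1}$ is not type-definable on $p$, then Newelski's dichotomy from \cite{N} forces at least $2^{\aleph_0}$ classes, and we conclude $|H_1(p)|\geq 2^{\aleph_0}$. If $\equiv^{H_1}$ is type-definable on $p$, then by part (1) the $H_1$-diameter $d_{H_1}(p)$ is finite, and the argument presented in Appendix A applied to a bounded type-definable equivalence relation of this form delivers the dichotomy $|H_1(p)|\in\{1\}\cup [2^{\aleph_0},\infty)$.

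The main obstacle is the type-definable case of (2): ruling out finite but nontrivial $H_1(p)$. This requires the self-contained analysis of Appendix A, since finite classes are of course permissible for general bounded type-definable equivalence relations and the constraint is specific to equivalence relations arising as the kernel of the homology map. By contrast, the non-type-definable case is immediate from Newelski's classical theorem, and (1) is a short compactness argument using monotonicity of the $R_i$'s.
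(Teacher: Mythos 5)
Your overall decomposition of part (2) matches the paper's: identify $H_1(p)$ with $p(\CM)/\equiv^{H_1}$, observe boundedness via $\gall(T)$, and split on whether $\equiv^{H_1}$ is type-definable, invoking Newelski's theorem in the non-type-definable case and Appendix A in the type-definable case. The bijection $\Theta$ and the boundedness argument are correct and match the intent of the paper.

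However, there are two real problems. First, in the converse direction of part (1), your ``short compactness argument'' does not go through. Each $R_i$ is \emph{type}-definable, so $\neg R_i$ is an infinite \emph{disjunction} of formulas, not a partial type; the collection $p(x)\cup p(y)\cup\Pi(x,y)\cup\{\neg R_i(x,y):i<\omega\}$ is therefore not a set of formulas, and first-order compactness gives you nothing about its ``inconsistency.'' Concretely, the completions of $p\cup p\cup\Pi$ form a compact set covered by the increasing closed sets $R_i$, and a compact set covered by an increasing chain of \emph{closed} sets need not be covered by any single one (there is no finite-subcover principle for closed covers). The argument that actually works here uses the group structure of $H_1(p)$ together with Baire category in the (quasi-)compact Galois group --- or simply cites Newelski's own proof, which is what the paper does. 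This is why the authors attribute part (1) to Newelski rather than giving a bare compactness argument.

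Second, your closing paragraph misattributes why finite non-trivial $H_1(p)$ is ruled out. Appendix~A is stated and proved for \emph{any} bounded $A$-type-definable equivalence relation $E$ on a \emph{strong type} $p$ over $A=\acl(A)$ --- not only for equivalence relations arising as kernels of the homology map. The mechanism that kills the finite case is exactly that $p$ is a strong type over an algebraically closed base: in Case~1 of Appendix~A, a finite number of classes forces $E$ to be relatively definable on $p$ and then to agree with a finite $\emptyset$-definable equivalence relation $E'$, which must be trivial on a strong type over $\acl(A)$. Nothing in that argument uses that $E=\equiv^{H_1}$. So your statement that ``finite classes are of course permissible for general bounded type-definable equivalence relations and the constraint is specific to equivalence relations arising as the kernel of the homology map'' gets the relevant hypothesis wrong: the constraint is on $p$ (being a strong type), not on $E$.
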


We finish this section by posing the following question for simple theories. 

\begin{Question}\label{simpleh_1} In a simple theory, is the first homology group of a strong type always trivial?
\end{Question}

\section{Examples}


\subsection{Topological groups and the first homology groups of types}
In this subsection, we argue that all connected abelian compact groups can occur as the first homology groups of strong types 
(Here, compact topological spaces are Hausdorff by definition). At first, note that in \cite{Z}   M. Ziegler showed
(using a result of E. Bouscaren, D. Lascar, and A. Pillay) that any compact group occurs as the Lascar Galois group of a complete theory.
\begin{fact}\label{cpt_gp_Lascar_gp}\cite{Z}
Let $G$ be a compact group. Then there is a complete theory $T_G$ whose Lascar Galois group is isomorphic to $G$.
\end{fact}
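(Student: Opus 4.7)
The plan is to reconstruct Ziegler's construction in broad strokes, producing a theory $T_G$ whose Lascar Galois group is isomorphic to the given compact group $G$ as a compact topological group. Since $\gall(T)$ is known to always be a compact (Hausdorff) topological group, what is at stake is the converse realization; I would take a two-sorted approach, with a base sort carrying a structure with trivial Lascar group and a cover sort $P$ that encodes a principal $G$-action, topologized via closed-set predicates.

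First I would fix an $\aleph_0$-categorical base structure $Y$ whose Lascar Galois group is trivial (for instance, an infinite set in the pure equality language). Next I would introduce a new sort $P$ equipped with a projection to $Y$, each fiber $P_y$ being a principal homogeneous $G$-space. To make the uncountable group $G$ visible to the first-order language, I would add, for each closed subset $F \subseteq G$, a binary predicate $R_F(x,y)$ (with $x,y$ in the same fiber) interpreted as: the unique $g \in G$ with $y = g \cdot x$ lies in $F$. The axioms force the expected compatibilities with the group and the topology of $G$: $R_{F_1 \cap F_2} = R_{F_1} \wedge R_{F_2}$, $R_{FH}(x,z) \leftrightarrow \exists y\, (R_F(x,y) \wedge R_H(y,z))$, and so on, and each fiber is axiomatized as a principal homogeneous space for $G$ via the family $\{R_F\}$.

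Once $T_G$ is axiomatized, I would compute $\autf(\CM)$ explicitly. An automorphism $\sigma \in \aut(\CM)$ lies in $\autf(\CM)$ iff it fixes some small submodel $M$ setwise, which, in the present setup, forces $\sigma$ to act on each $M$-meeting fiber by left multiplication by a single group element $g_\sigma \in G$; by saturation and the connectivity of the $G$-action this $g_\sigma$ is the same across all fibers, and the condition $\sigma \in \autf(\CM)$ translates to $g_\sigma = 1$. The resulting map $\aut(\CM) \to G$ is a surjective group homomorphism with kernel exactly $\autf(\CM)$, giving $\gall(T_G) \cong G$ as abstract groups. For the topology, one uses the description from Remark \ref{endowtop}: the quotient topology on $\gall(T_G)$ is inherited from $S_M(N)$ via the map sending $\tp(\sigma(M)/N)$ to $g_\sigma$, and the predicates $R_F$ ensure that closed subsets of $G$ pull back to closed subsets of $S_M(N)$, so the quotient topology agrees with the given compact topology on $G$.

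The main obstacle, as in Ziegler's original argument, is the topological match rather than the algebraic one: the Stone space $S_M(N)$ is totally disconnected while $G$ need not be, so one must argue that the quotient topology from a totally disconnected compact space can still coincide with any prescribed compact Hausdorff topology on the image. The resolution is the standard fact that a continuous surjection between compact Hausdorff spaces is a quotient map, combined with the observation that the richness of the predicates $R_F$ (one for every closed $F \subseteq G$) provides enough type-definable sets to pull back the closed subsets of $G$. This is precisely the point at which the result of Bouscaren-Lascar-Pillay on covers is invoked, and it is the step that would require the most care to execute rigorously.
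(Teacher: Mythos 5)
The paper states this as a citation of Ziegler's lecture notes \cite{Z} and gives no internal proof, so I can only evaluate your reconstruction on its own terms. There I see a genuine gap that invalidates the construction as you have described it.

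With a base sort $Y$ (pure infinite set), a cover sort $P$ with projection $\pi\colon P\to Y$, each fiber a $G$-torsor, and predicates $R_F(x,y)$ only between points \emph{within a single fiber}, the resulting theory has \emph{trivial} Lascar group. The problem is that distinct fibers are completely independent: there is no relation linking $P_y$ and $P_{y'}$ for $y\neq y'$. Take any $x,x'\in P_y$ and any small model $M\prec\CM$ with $y\notin\pi(M)$. Since $P_y$ is disjoint from $M$ and no predicate relates $P_y$ to $M$, we get $\tp(x/M)=\tp(x'/M)$, hence $x\equiv^L x'$ directly. So there is a single Lascar strong type on all of $P$ (and trivially on $Y$), which forces $\autf(\CM)=\aut(\CM)$ and $\gall(T_G)=\{1\}$, not $G$. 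Equivalently at the level of automorphisms: an automorphism fixing $Y$ pointwise and acting by shifts $g_y$ on the fibers can be factored as $\sigma_1\sigma_2$ where $\sigma_1$ is the identity over a small $Y_1\subset Y$ (hence fixes a small model pointwise) and $\sigma_2$ is the identity off $Y_1$ (hence fixes a small model over $Y\setminus Y_1$ pointwise), so every such $\sigma$ lands in $\autf(\CM)$. Your claim that ``by saturation and the connectivity of the $G$-action this $g_\sigma$ is the same across all fibers'' has no mechanism behind it; nothing in the structure synchronizes the shifts on different fibers.

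Two further issues. First, the test you state for membership in $\autf(\CM)$ is wrong on both counts: $\autf(\CM)$ is \emph{generated} by automorphisms fixing some small elementary submodel \emph{pointwise}; it is neither a union nor about setwise fixing, and your kernel computation leans on the incorrect description. Second, putting in $R_F$ for \emph{every} closed $F\subseteq G$ over-determines the structure: singletons are closed in a compact Hausdorff group, so each fiber becomes rigid of cardinality exactly $|G|$, and the theory cannot have models of arbitrarily large saturation. The construction that actually works must use a coarser family of predicates so that the nonstandard ``difference'' between two points can be a proper closed filter rather than a single element of $G$; this is exactly what the circular order plus rational rotations do in $\CM_n$ (Section 5.2 of the paper), and it is also where the Bouscaren--Lascar--Pillay ingredient does real work: the predicates must tie the ``shifts'' together globally, rather than leaving the fibers independent. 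Your overall instinct (a $G$-torsor-like structure made first-order via closed-set predicates, with the topological match being the crux) has the right flavor, but the specific two-sorted setup with fiber-internal relations does not produce the intended Lascar group.
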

\noindent From Remark \ref{h1_stp_model}, we also know that the first homology group of a strong type of a model is isomorphic to the abelianization of the connected component of Lascar Galois group. 
So, if we take $G$ in Fact \ref{cpt_gp_Lascar_gp} as an abelian and connected group, we conclude that the first homology group of a strong type of model in $T_G$ is isomorphic to the Lascar Galois group $G$ itself.
\begin{theorem}\label{ab_conn_gp_h1}
For each abelian connected compact group $G$, there is a strong type of a model of a complete theory whose first homology group is isomorphic to $G$.
\end{theorem}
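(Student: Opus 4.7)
The plan is to chain together Fact~\ref{cpt_gp_Lascar_gp} with Remark~\ref{h1_stp_model}, using the abelianness of $G$ to collapse the resulting derived-quotient. Given an abelian connected compact group $G$, I would first invoke Fact~\ref{cpt_gp_Lascar_gp} to produce a complete theory $T_G$ whose Lascar Galois group $\gall(T_G)$ is isomorphic, as a topological group, to $G$.

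Next, inside a monster model of $T_G$, I would fix a small submodel $M$ and set $p:=\tp(\a/\emptyset)$ for some enumeration $\a$ of $M$, so that $p$ is a strong type of a model in the sense required by Remark~\ref{h1_stp_model}. Applying that remark then yields a topological group isomorphism $H_1(p)\cong \gall(T_G)/\gall(T_G)'$. Since $\gall(T_G)\cong G$ is abelian, the derived subgroup $\gall(T_G)'$ is trivial, and hence $H_1(p)\cong \gall(T_G)\cong G$, as required.

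I do not anticipate any real obstacle: the argument is a direct composition of the two cited results together with the trivial group-theoretic observation that $G/G'=G$ for abelian $G$. The one point worth a moment's reflection is that the identification in Remark~\ref{h1_stp_model} is genuinely an isomorphism of topological groups, so that the topological-group identification $\gall(T_G)\cong G$ transports cleanly to $H_1(p)$ rather than merely giving an abstract isomorphism; this is however explicit in the statement. The connectedness hypothesis on $G$ plays no direct role in this particular deduction --- it is presumably retained to underline that the construction genuinely produces non-profinite (and in particular connected) first homology groups, in contrast to the profinite behaviour of the known higher $H_n(p)$.
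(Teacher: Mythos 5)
Your skeleton --- chain Fact~\ref{cpt_gp_Lascar_gp} with Remark~\ref{h1_stp_model} and use abelianness of $G$ to kill the derived quotient --- agrees with the paper's route, but your claim that the connectedness hypothesis ``plays no direct role in this particular deduction'' is mistaken, and it marks a genuine gap.

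Remark~\ref{h1_stp_model}, like everything in Sections 1--4, is stated over a fixed base $B=\acl^{\eq}(B)$ that has been suppressed to $\emptyset$; in particular the ``$\gall(T)$'' appearing there is the Lascar group \emph{over an algebraically closed base}, and the $p$ to which that remark applies must be a strong type, i.e.\ a type over such a base. When you set $p:=\tp(\a/\emptyset)$ in a monster model of $T_G$, this is a strong type (and Remark~\ref{h1_stp_model} applies with the group you want) only if $\acl^{\eq}(\emptyset)=\dcl^{\eq}(\emptyset)$ in $T_G$. But Fact~\ref{cpt_gp_Lascar_gp} only controls $\gall(T_G,\emptyset)$, which in general surjects onto the profinite group $\aut(\acl^{\eq}(\emptyset))$ with kernel $\gall(T_G,\acl^{\eq}(\emptyset))$ --- this kernel is the ``connected component'' the paper refers to in the sentence preceding the theorem, and it is the group to which Remark~\ref{h1_stp_model} actually applies. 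Without connectedness the two groups can genuinely differ: for $G=\BZ/2\BZ$, say, $\gall(T_G,\acl^{\eq}(\emptyset))$ would be trivial and your argument would yield $H_1(p)=0\ne G$. Connectedness is exactly what closes the gap: a connected compact group has no nontrivial continuous profinite quotient, so $\aut(\acl^{\eq}(\emptyset))$ is trivial, whence $\acl^{\eq}(\emptyset)=\dcl^{\eq}(\emptyset)$ and $\gall(T_G,\acl^{\eq}(\emptyset))=\gall(T_G,\emptyset)\cong G$; only then does your abelianization step give $H_1(p)\cong G/G'=G$. Adding this observation completes the proof.
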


\begin{remark}\label{h1=0_not_enough} There is a strong type $p$ in a theory with trivial first homology group, which
is not a Lascar strong type.   In other words, 
in Corollary \ref{equality_stp_Lstp} (2), we cannot omit the condition of abelianness of $\gall^{\fix}(p)$ to conclude that a 
given strong type $p$ is a Lascar strong type. If $G$ is a non-trivial connected compact group whose commutator subgroup is
itself, then the first homology group of a strong type of a model of $T_G$ is trivial. In this case, the strong type is not 
a Lascar strong type because the Lascar Galois group is not trivial. For example, we can take $G:=SU(3)$ as such a group.

\end{remark}

\subsection{Some computation of the first homology group of a type} Here we give a more concrete
example of a strong type in a  rosy theory with a non-trivial first homology group. In \cite{KKL}, S. Kim, and the second and third authors considered the structures
$\CM_{n}=(M;S;g_{1/n})$ (which were earlier studied in \cite{CLPZ})
for each $n\in \BN\setminus \{0\}$, where
\be
	\item $M$ is a saturated circle;
	\item $g_{1/n}$ is the clockwise rotation by $2\pi/n$ radians; and	
	\item $S$ is a ternary relation such that $S(a,b,c)$ holds if $a,b,c$ are distinct and $b$ comes before $c$ going
	around the circle clockwise starting at $a$,
	
\ee
and it was shown that the unique strong 1-type $p_n$ in $S_1(\emptyset)$ has the trivial
first homology group for every $n$, and is actually a Lascar strong type.
Now, we consider a structure $\CM=(M;S;g_{1/n} : n\in \BN\setminus\{0\})$ expanding the
structures $\CM_{n}$ by adding all rotation functions by $2\pi/n$-radians for each
$n\in \BN\setminus\{0\}$ at the same time (when we write $g_r$ for $r=m/n$ in $\BQ\cap[0,1)$, it means $g_{1/n}^m$). 
We show that $\Th(\CM)$ is a rosy theory. In \cite{EO}, C. Ealy and A. Onshuus gave a sufficient condition for a theory to be rosy.

\begin{Fact}\label{character_rosy}
Any theory $T$ which weakly eliminates imaginaries and for which the
algebraic closure defines a pregeometry is rosy of thorn $U$-rank $1$.
\end{Fact}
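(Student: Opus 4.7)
The plan is to verify that thorn-independence in a theory with weak elimination of imaginaries (WEI) and a pregeometric $\acl$ is governed by the pregeometry, and then read off the rank directly. Recall that a formula $\phi(x,b)$ \emph{strongly divides} over $A$ if $b\notin \acl(A)$ and $\{\phi(x,b')\mid b'\equiv_A b\}$ is $k$-inconsistent for some $k$, and \emph{thorn-divides} over $A$ if it strongly divides over some $A'\supseteq A$. Rosiness reduces to showing that the resulting thorn-forking satisfies the usual axioms (in particular local character), and thorn $U$-rank $1$ says every complete non-algebraic $1$-type has thorn-rank exactly $1$.

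First, I would establish the characterisation that for tuples $a,b$ and a set $C$, we have $a$ thorn-independent from $b$ over $C$ if and only if $\acl(aC)\cap\acl(bC)=\acl(C)$, i.e., thorn-independence coincides with pregeometric (algebraic) independence. The $(\Leftarrow)$ direction uses only the exchange property of the pregeometry: if $a$ is $\acl$-independent from $b$ over $C$, then the set of $C$-conjugates of $b$ is too large to pin $a$ down by any finite equivalence relation over $C$, so no formula in $\tp(a/Cb)$ can strongly divide. The $(\Rightarrow)$ direction is where WEI is essential. Assume $a\in\acl(bC)\setminus\acl(C)$, witnessed by some formula with finitely many solutions; the set of $C$-conjugates $b'$ of $b$ that make $a\in\acl(b'C)$ naturally determines an imaginary, which by WEI may be replaced by a real tuple, and this real tuple yields a genuine finite equivalence relation on $\tp(b/C)$ witnessing strong dividing over an appropriate enlargement of $C$.

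Next, I would verify that the relation defined by pregeometric independence satisfies the package of axioms characterising a ``good'' independence relation (finite character, symmetry, normality, monotonicity, transitivity, extension, and local character). Each is a direct consequence of the pregeometry axioms on $\acl$, with WEI again invoked whenever one needs to pass between imaginary parameters and real ones. By Onshuus's characterisation of rosy theories, the existence of any such relation forces $T$ to be rosy, and forces thorn-independence to coincide with pregeometric independence. Finally, since $\acl$ is a pregeometry on the home sort, every complete $1$-type over any parameter set has pregeometric dimension $0$ (if algebraic) or $1$ (otherwise), and because thorn-forking drops pregeometric dimension, every non-algebraic $1$-type has thorn $U$-rank exactly $1$.

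The main obstacle is the $(\Rightarrow)$ direction of the characterisation: converting a bare algebraic dependence $a\in\acl(bC)$ into an honest finite equivalence relation over real parameters that witnesses strong dividing. The natural equivalence relation one writes down is a priori imaginary, and this is precisely the step where weak elimination of imaginaries is doing non-trivial work; verifying local character without WEI can fail, which is why the hypothesis appears.
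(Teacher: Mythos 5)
The paper does not prove this Fact; it is quoted verbatim from Ealy--Onshuus \cite{EO}, and your task was therefore to reconstruct their argument rather than to match a proof appearing here. Your high-level strategy --- characterise thorn-independence as pregeometric independence, use weak elimination of imaginaries to control imaginary base sets, then invoke Onshuus's abstract criterion (a theory is rosy iff it carries a strict independence relation, in which case thorn-independence is the weakest such) and read off thorn $U$-rank $1$ from the fact that the home-sort pregeometry has rank at most $1$ on single elements --- is indeed the intended route, and broadly matches \cite{EO}.

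Two steps in your sketch are not yet proofs. In the $(\Leftarrow)$ direction you say the set of $C$-conjugates of $b$ is ``too large to pin $a$ down by any finite equivalence relation over $C$.'' Strong dividing is not a statement about finite equivalence relations, and this phrasing does not yield non-thorn-dividing. The working argument is sharper: suppose $a$ is a single real element, $a\notin\acl(Cb)$, and $\phi(x,b)\in\tp(a/Cb)$ strongly divides over some $A\supseteq C$. Strong dividing requires $b\notin\acl(A)$; combined with $a\notin\acl(Ab)$ and exchange, one deduces $b\notin\acl(Aa)$, so $\tp(b/Aa)$ is non-algebraic and the infinitely many $Aa$-conjugates $b'$ of $b$ all satisfy $a\models\phi(x,b')$, contradicting $k$-inconsistency. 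You invoke exchange but do not actually run it, and for $b$ a tuple rather than a singleton one needs to reduce to the singleton case (finite character of $\acl$) before exchange applies. In the $(\Rightarrow)$ direction your appeal to WEI is in the right spirit, but the step is genuinely delicate: from $a\in\acl(Cb)\setminus\acl(C)$ one passes to the canonical parameter of the finite set $\phi(\CM,b)$, which is a priori imaginary, and WEI replaces it by a real tuple $d$; one must then verify that $d$ is interalgebraic over $C$ with the imaginary code, that $b\notin\acl(Cd)$, and that the $Cd$-conjugates of $b$ produce a $k$-inconsistent family. You flag this as ``the main obstacle,'' which is correct, but the sketch stops exactly at the point where the hypothesis does its work. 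Finally, invoking Onshuus's characterisation requires verifying local character, base monotonicity, and extension for $\indo^{geo}$ over arbitrary imaginary base sets; WEI is again what reduces this to the pregeometry on the home sort, and your sketch gestures at this without carrying it out.
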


At first, we show that $\Th(\CM)$ has weak elimination of imaginaries. In \cite{P}, B. Poizat defined a theory $T$ to have 
{\em weak elimination of imaginaries} if every definable set has a smallest algebraically closed set over which it is definable. 
By repeating the argument from \cite{KKL},
we obtain the following sufficient condition for weak
elimination of imaginaries in an $\aleph_0$-categorical theory:
\begin{theorem}\label{wei_omegacat}
Let $T$ be $\aleph_0$-categorical and let $\CM=(M,\ldots)$ be a saturated model of $T$.
Suppose that if $X\subset M^1$ is definable over each of
two algebraically closed sets $A_0$ and $A_1$, then $X$ is definable over $B:=A_0\cap A_1$.


 Then, for any subset $Y$ of $M^n$, if $Y$ is both $A_0$-definable and $A_1$-definable, then
it is $B$-definable. Furthermore, in this case, $T$ has weak elimination of imaginaries.
\end{theorem}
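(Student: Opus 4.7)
The plan has two main phases: first, extend the one-dimensional hypothesis to all dimensions by induction on $n$; second, deduce weak elimination of imaginaries from the extended statement by exhibiting, for each definable set, a canonical smallest algebraically closed set of definition.

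For the induction, the base case $n=1$ is exactly the hypothesis. For the step $n-1\Rightarrow n$, given $Y\subseteq M^n$ definable over each of two algebraically closed sets $A_0,A_1$, I would fix the last coordinate and consider the slice
\[
Y_c := \{\bar a\in M^{n-1} \mid (\bar a,c)\in Y\}
\]
for $c\in M$; this slice has uniform definitions over $\acl(A_0 c)$ and $\acl(A_1 c)$ inherited directly from the $A_0$- and $A_1$-definitions of $Y$. By the inductive hypothesis applied to the pair $(\acl(A_0c),\acl(A_1c))$, the slice $Y_c$ is definable over $\acl(A_0 c)\cap\acl(A_1 c)$. The key supporting lemma I would then establish is the intersection identity $\acl(A_0 c)\cap\acl(A_1 c)=\acl(Bc)$, which yields uniformly $Bc$-definable slices; assembling these back into a single formula over $B$ defining $Y$ is then routine in the $\aleph_0$-categorical setting, either by noting that $B$-types of $c$ are finitely many, or by a compactness argument on the uniform slice-definitions.

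For the weak elimination of imaginaries, $\aleph_0$-categoricity forces every definable set $Y$ to be defined over a finite, hence algebraically closed, set, so the family $\CF_Y$ of algebraically closed sets over which $Y$ is defined is non-empty. By the extension proved above, $\CF_Y$ is closed under pairwise intersection; intersecting all members of $\CF_Y$ contained in a fixed finite one yields a (necessarily finite) minimum element $A^\ast_Y\in\CF_Y$. Being canonically associated with $Y$, $A^\ast_Y$ is preserved setwise by every automorphism fixing $\ulcorner Y\urcorner$, so its enumeration as a real tuple lies in $\acl^{\eq}(\ulcorner Y\urcorner)$; conversely $\ulcorner Y\urcorner\in\dcl^{\eq}(A^\ast_Y)$ since $Y$ is definable over $A^\ast_Y$. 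This furnishes exactly the weak code required.

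The principal obstacle I anticipate is the intersection identity $\acl(A_0 c)\cap\acl(A_1 c)=\acl(Bc)$: the inclusion $\supseteq$ is automatic, but the reverse requires extracting algebraicity over the small base $Bc$ from two separate algebraicity conditions over $A_0 c$ and $A_1 c$. I would attack this by first reducing to finite $A_i$ via $\aleph_0$-categoricity, so that $\acl(A_i c)$ is finite, then taking $d$ in the left-hand side with its finite $\Aut(\CM/A_ic)$-orbits $O_i\ni d$, and applying the $n=1$ hypothesis to the pair $(\acl(A_0c),\acl(A_1c))$ (the hypothesis is valid for \emph{every} pair of algebraically closed sets, not only for $A_0,A_1$) to constrain the definable subsets of $M$ that separate $d$ from the rest of its $Bc$-type; the intended outcome is that the $\Aut(\CM/Bc)$-orbit of $d$, which is cut out of the infinite $Bc$-type of $d$ by these constraints, is forced to remain finite, placing $d\in\acl(Bc)$.
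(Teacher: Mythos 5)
Your overall architecture mirrors the paper's: induct on $n$ by fibering over the last coordinate, glue the fiber-definitions into a single $B$-definition using $\aleph_0$-categoricity (finitely many formulas, finite $B$), and then extract weak elimination of imaginaries from finiteness of algebraic closures. Phase two is fine and essentially what the paper does. The problem is the supporting lemma on which you hang the inductive step.

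The identity $\acl(A_0 c)\cap\acl(A_1 c)=\acl(Bc)$ (with $B=A_0\cap A_1$) is false in general, even under the theorem's hypothesis. Take $T$ to be the theory of an infinite $\mathbb{F}_2$-vector space. This is $\aleph_0$-categorical and $\acl$ is linear span; by quantifier elimination every definable $X\subseteq M^1$ is finite or cofinite with the finite part contained in the span of the parameters, so $X$ definable over two subspaces $A_0,A_1$ lies in $A_0\cap A_1$ (or its complement does) and hence is $A_0\cap A_1$-definable. So the hypothesis of the theorem holds. Now let $a,b$ be linearly independent, $A_0=\langle a\rangle$, $A_1=\langle b\rangle$ (so $B=\{0\}$), and $c=a+b$. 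Then $\acl(A_0c)=\acl(A_1c)=\langle a,b\rangle$, so the left-hand side is $\langle a,b\rangle$, whereas $\acl(Bc)=\langle a+b\rangle$ is a proper subspace. Your final-paragraph sketch is therefore pursuing a false claim: for $d=a$ the $\Aut(\CM/Bc)$-orbit of $d$ consists of all vectors outside $\langle a+b\rangle$ and is infinite, not finite. Consequently the step ``$Y_c$ is definable over $\acl(Bc)$'' cannot be reached by applying the inductive hypothesis to $\acl(A_0c),\acl(A_1c)$ and then invoking the intersection identity; the inductive hypothesis only gives definability of $Y_c$ over $\acl(A_0c)\cap\acl(A_1c)$, which can strictly contain $\acl(Bc)$. (In the example above the fiber \emph{is} still $Bc$-definable, because $Y$ being definable over both $\langle a\rangle$ and $\langle b\rangle$ forces it to be $\emptyset$-definable; but your lemma does not deliver that.) The paper's own proof is extremely terse at this exact point, asserting only that ``$Y_c$ is $cB$-definable by induction,'' so whatever argument the authors intend, it cannot be your intersection identity, and you would need to identify and prove the correct intermediate claim before your plan can go through.
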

\begin{proof}
Let $A_0=\acl(A_0)$, $A_1=\acl(A_1)$, and $B=A_0\cap A_1$. We use induction on $n$.
If $n=1$, the conclusion holds by assumption. Let us show that the conclusion holds for
$n+1$ assuming it holds for $n$. Put $A_0=\acl(A_0)$, $A_1=\acl(A_1)$, and $B=A_0\cap A_1$.
Since, by $\aleph_0$-categoricity, the algebraic closure of 
a finite set is finite, we may assume that $A_0$ and $A_1$ are finite, and so is $B$. Let $Y\subset M^{n+1}$ be
$A_i$-definable by a formula $\phi_i(x_0,\ldots,x_n;\a_i)$ with $\a_i\subset A_i$
for $i=0,1$. Then, for each $c\in M$, the fiber of $Y$ over $c$,
$Y_c:=\{ \x \in M^n|\ \phi_i(\x,c;\a_i)\}$ is $cB$-definable by induction.
By $\aleph_0$-categoricity, there are only finitely many formulas over $\emptyset$ modulo
$T$, and it easily follows that for each $c\in M^1$, $\phi_i(x_0,\ldots,x_{n-1},c,\a_i)$ is
$B$-definable. Thus, $Y$ is $B$-definable.

Since (again by $\aleph_0$-categoricity) there is no infinite descending chain of algebraically closed sets generated by
finitely many elements, we conclude that any definable set has a smallest algebraically
closed set over which it is definable. Thus, $T$ weakly eliminates imaginaries.
\end{proof}
\noindent As a corollary to Theorem \ref{wei_omegacat}, it was shown in \cite{KKL}
that for each $n\ge 2$, $\Th(\CM_{n})$ has weak elimination of imaginaries.
\begin{fact}[\cite{KKL}]\label{wei_M_1_reduct}
For each $n\ge 2$, $\Th(\CM_{n})$ weakly eliminates imaginaries.
\end{fact}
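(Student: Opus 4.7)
The plan is to apply Theorem \ref{wei_omegacat} to $\Th(\CM_n)$, which reduces the task to verifying two things: that $\Th(\CM_n)$ is $\aleph_0$-categorical, and that any subset $X \subseteq M^1$ definable over two algebraically closed parameter sets $A_0, A_1$ is already definable over $A_0 \cap A_1$.

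For $\aleph_0$-categoricity, I would run a back-and-forth argument. The reduct $(M,S)$ is a dense cyclic order without endpoints, whose theory is classically $\aleph_0$-categorical. Adding the rotation $g_{1/n}$ is handled by noting that $g_{1/n}$ partitions $M$ into $n$ congruent (via iterates of $g_{1/n}$) "fundamental arcs"; in a countable saturated model each arc is densely ordered without endpoints, and the rotation identifies them equivariantly. So any countable model is determined up to isomorphism by this data, giving $\aleph_0$-categoricity. Along the way, one computes $\acl(A) = \{g_{1/n}^k(a) : a\in A,\ 0\le k<n\}$, which is finite for finite $A$, as required.

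For the subset condition, the key is to classify definable subsets of $M^1$ (over parameters). The plan is to establish quantifier elimination for $\Th(\CM_n)$ in the natural language (extended by function symbols for $g_{1/n}^k$, $0\le k<n$), again by a back-and-forth / amalgamation argument combined with the $\aleph_0$-categoricity. Once quantifier elimination holds, any formula $\phi(x;\a)$ with $x$ a single variable is equivalent to a Boolean combination of the atomic formulas $S(t_1(x,\a), t_2(x,\a), t_3(x,\a))$ and $t_1(x,\a) = t_2(x,\a)$, where the terms $t_i$ are iterates $g_{r}$ of the rotation applied to $x$ or to an element of $\a$. Unwinding, such a definable set is a finite Boolean combination of arcs whose endpoints lie in $\acl(\a)$ together with singletons in $\acl(\a)$.

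Given $X \subseteq M^1$ definable over $A_0 = \acl(A_0)$ and $A_1 = \acl(A_1)$, write $X$ in its canonical form as a disjoint union of maximal arcs and isolated points. The set of endpoints of these arcs, together with the isolated points, is canonically determined by $X$ (it depends only on $X$, not the defining formula), and must lie inside $A_i$ for each $i \in \{0,1\}$, since $X$ is $A_i$-definable. Hence this canonical parameter set is contained in $B := A_0 \cap A_1$, and $X$ is $B$-definable. Theorem \ref{wei_omegacat} then delivers weak elimination of imaginaries.

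The main obstacle is the quantifier elimination step, or equivalently the classification of $1$-variable definable sets; without it, an $A_i$-definable $X$ need not a priori be expressible purely in terms of arcs and singletons with parameters in $A_i$, and the $A_0 \cap A_1$ argument collapses. Once QE (or the slightly weaker statement that definable subsets of $M^1$ are finite unions of arcs and points with parameters in the algebraic closure of the parameter set) is in place, the canonicity of the endpoint/singleton data makes the rest essentially automatic.
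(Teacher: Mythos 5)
Your proposal is correct and follows exactly the route the paper indicates: the paper does not reprove this fact but cites it from \cite{KKL}, stating explicitly that it was obtained there as a corollary to Theorem \ref{wei_omegacat}. Your verification of $\aleph_0$-categoricity, the quantifier-elimination-based classification of definable subsets of $M^1$ as finite unions of arcs and points with endpoints in the algebraic closure of the parameters, and the observation that the canonical endpoint/isolated-point data lies in $A_0\cap A_1$ supplies the needed details in a way consistent with the cited source and with the parallel arguments the paper does spell out for $\CM$ in Theorems \ref{QE_M_1} and \ref{wei_M_1}.
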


Next, we will see that the theory of $\CM$ has quantifier-elimination.
\begin{definition}\label{rotation_closure}
Let $M$ be the non-standard circle which is the universe of $\CM$.
For $A\subset M$, let $\cl(A):=\{g_r(a)|\ a\in A,\ r\in \BQ\cap [0,1) \}$. Later, we will see that $\cl(A)=\dcl(A)=\acl(A)$ in the home sort of $\CM$. It is also easy to see that $\cl(A)$ is a substructure of $\CM$.
\end{definition}

\begin{theorem}\label{QE_M_1}
The theory of $\CM$ has quantifier-elimination.
\end{theorem}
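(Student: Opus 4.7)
The plan is to prove QE via the standard back-and-forth / substructure-completeness test: I will show that whenever $\bar a$ and $\bar b$ are finite tuples with the same quantifier-free type and $c\in \CM$ is arbitrary, there exists $d$ (in a sufficiently saturated elementary extension) such that $\bar a c$ and $\bar b d$ again share the same quantifier-free type. Iterating this (together with the symmetric back direction) yields $\tp(\bar a)=\tp(\bar b)$ whenever their quantifier-free types agree, which is exactly quantifier elimination.

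First I observe that since the only function symbols are the rotations $g_{1/n}$, and these generate all $g_r$ for $r\in \BQ\cap[0,1)$ under composition, the substructure of $\CM$ generated by $A$ in the language $L=\{S,g_{1/n}: n\ge 1\}$ is precisely $\cl(A)$ as in Definition \ref{rotation_closure}. Given tuples $\bar a, \bar b$ with equal quantifier-free types, I define the natural map $f:\cl(\bar a)\to\cl(\bar b)$ by $f(g_r(a_i)):=g_r(b_i)$; well-definedness and preservation of $S$ follow at once from equality of the quantifier-free types of $\bar a$ and $\bar b$ (which record all term-equalities and all $S$-atomic relations among the $g_r(a_i)$), and $f$ commutes with rotations by construction, so $f$ is an $L$-isomorphism.

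The crux is the extension step: given $c\in \CM$, produce $d$ with $c\mapsto d$ extending $f$ to an $L$-isomorphism $\cl(\bar a c)\cong \cl(\bar b d)$. If $c\in \cl(\bar a)$, take $d=f(c)$. Otherwise, transport the quantifier-free type of $c$ over $\cl(\bar a)$ along $f$ to a partial type $\pi(x)$ over $\cl(\bar b)$, and invoke saturation: it suffices to verify $\pi$ is finitely consistent. Any finite portion $\pi_0$ imposes cyclic-order constraints on $d,g_{r_1}(d),\ldots,g_{r_k}(d)$ relative to finitely many points of $\cl(\bar b)$, together with disequalities; rewriting $g_{r_j}(d)$-constraints as constraints on $d$ (by applying $g_{-r_j}$ to the endpoints), this becomes a single requirement that $d$ lie in the intersection of finitely many open arcs on the circle with explicit endpoints in $\cl(\bar b)$.

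The main obstacle is showing that this arc intersection is nonempty. Because $f$ preserves $S$ and commutes with every rotation, the cyclic-combinatorial configuration of the finitely many rotated endpoints on the $\bar b$-side agrees with that of the corresponding endpoints on the $\bar a$-side; in particular, the analogous arc intersection on the $\bar a$-side has the same combinatorial shape, and is witnessed to be nonempty by $c$ itself. Density of the cyclic order on $\CM$, together with the saturation of the ambient monster, then produces the required $d$ inside the $\bar b$-side arc intersection (automatically avoiding the finitely many forbidden points). Running the back-and-forth in the symmetric direction as well completes the proof of quantifier elimination.
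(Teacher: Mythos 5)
Your proof is correct and follows the same high-level route as the paper --- a back-and-forth substructure-extension argument, with substructures identified as sets of the form $\cl(A)$ --- but the two implement the extension step quite differently. You transport the quantifier-free type of $c$ over $\cl(\bar a)$ along $f$, reduce each finite fragment (after absorbing the rotations into the endpoints) to the requirement that $d$ lie in a finite intersection of open arcs with endpoints in $\cl(\bar b)$, and argue finite consistency from the fact that $f$ preserves $S$ and commutes with rotations, so the cyclic arrangement of those endpoints is the same on both sides and the $\bar a$-side intersection is witnessed nonempty by $c$; saturation and density then furnish $d$. The paper instead produces the witness directly in one step: given $a\notin A=\cl(A)$, it partitions $A$ into the two halves $A_0=\{x\in A:\ S(a,x,g_{1/2}(a))\}$ and $A_1=\{x\in A:\ S(g_{1/2}(a),x,a)\}$, transfers this partition to $B$ along $f$, and takes any $b$ in the corresponding gap, i.e.\ with $S(y_1,b,y_0)$ for all $y_0\in B_0$, $y_1\in B_1$. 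The antipodal trick works because $A$ is rotation-closed: for each $x\in A$, recording which half contains $g_r(x)$ as $r$ ranges over $\BQ\cap[0,1)$ recovers the exact cut of $a$ against the orbit of $x$, so the single gap-condition on $b$ already forces full agreement of quantifier-free types on $\cl(Aa)$ and $\cl(Bb)$. Your version is the more standard compactness argument and makes explicit the finite-consistency verification that the paper leaves to the reader; the paper's version is shorter and avoids appealing to a transported type, at the cost of an implicit (but true) claim that the half-partition encodes the whole cut. Both are sound.
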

\begin{proof}
Take two small subsets $A,B\subset M$ such that $A=\cl(A)$ and $B=\cl(B)$ in $M$,
and a partial isomorphism $f:A\to B$. Take $a\in M\setminus A$. We will find $b\in M\setminus B$
such that the map $f\cup\{(a,b)\}$ can be extended to an embedding from $\cl(Aa)$ to $\cl(Bb)$ in
$\CM$. Then, the quantifier-elimination in $\Th(\CM)$ comes from a standard argument.
We divide $A$ into two parts: $A_0 :=\{x\in A|\ S(a,x,g_{1/2}(a))\}$ and
$A_1 :=\{x\in A|\ S(g_{1/2}(a),x,a)\}$. Then $B$ is also divided into two parts:
$B_0=f(A_0)$ and $B_1=f(A_1)$. Take arbitrary $b\in M$ such that for all
$y_0\in B_0,\ y_1\in B_1$, we have that $S(y_1,b,y_0)$. Then $b$ is a desired element.
\end{proof}

\begin{theorem}\label{wei_M_1}
The theory of $\CM$ weakly eliminates imaginaries, and is rosy of thorn $U$-rank $1$.
\end{theorem}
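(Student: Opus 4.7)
The plan is to reduce the theorem to Fact \ref{character_rosy}: it suffices to check (a) that $\acl$ defines a pregeometry on $M$, and (b) that $\Th(\CM)$ has weak elimination of imaginaries. Then rosiness with thorn $U$-rank $1$ follows automatically.

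For (a), I first compute the closures. Using the quantifier elimination from Theorem \ref{QE_M_1} together with the saturation of $\CM$, one shows $\acl(A) = \dcl(A) = \cl(A)$ for every small $A \subseteq M$: the quantifier-free type of an element $a$ over $A$ is determined by the cyclic position of $a$ relative to $\cl(A)$, and for $a \notin \cl(A)$ saturation produces distinct realizations of the same cyclic cut, so $a \notin \acl(A)$. The exchange axiom is then explicit: if $a \in \cl(Ab) \setminus \cl(A)$, then $a = g_r(b)$ for some $r \in \BQ \cap [0,1)$, whence $b = g_{-r}(a) \in \cl(Aa)$. Hence $\cl = \acl$ is a pregeometry.

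For (b), I would follow the pattern of Theorem \ref{wei_omegacat}. The one-variable case is clean: by QE, any definable subset of $M^1$ over $A$ is a boolean combination of atomic ``arc conditions'' $S(c_1, x, c_2)$ and equalities $x = c$ with $c, c_1, c_2 \in \cl(A)$. If such a set is definable over each of two algebraically closed sets $A_0$ and $A_1$, then all of its endpoints must lie in $\cl(A_0) \cap \cl(A_1) = A_0 \cap A_1$, so it is definable over $A_0 \cap A_1$. For the inductive step one needs a modularity identity, immediate from the explicit formula for $\cl$: $\cl(A_0 c) \cap \cl(A_1 c) = \cl((A_0 \cap A_1)c)$ for every $c \in M$. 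This allows one to transport fiber-wise definability over $A_i c$ into fiber-wise definability over $(A_0 \cap A_1) c$ in the induction on arity.

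The main obstacle is that $\Th(\CM)$ is \emph{not} $\aleph_0$-categorical---the cyclic cut of a fresh element over $\{g_r(a) : r \in \BQ\}$ already accounts for continuum-many $2$-types---so the ``finitely many formulas modulo $T$'' step that closes the inductive argument in the proof of Theorem \ref{wei_omegacat} is unavailable. To replace it I would lean directly on QE: every quantifier-free formula defining a subset of $M^{n+1}$ is a boolean combination of arc-conditions $S(\cdot, \cdot, \cdot)$ and rotational equalities $g_r(\cdot) = g_s(\cdot)$ whose combinatorial shape is uniform in the distinguished coordinate $y$; as $y$ varies, only the parameters of the form $g_s(y)$ move, and they move definably over $B = A_0 \cap A_1$. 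This uniformity should let one descend the defining parameters of $Y$ to $B$ across the induction. Once weak EI is secured, Fact \ref{character_rosy} together with the pregeometry from (a) yields the theorem.
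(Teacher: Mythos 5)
Your reduction to Fact \ref{character_rosy} and your verification that $\acl = \dcl = \cl$ is a pregeometry with exchange are both fine and match the paper's route. The divergence, and the gap, is in how you handle weak elimination of imaginaries.

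You correctly notice that $\Th(\CM)$ is not $\aleph_0$-categorical, so the ``only finitely many formulas modulo $T$'' step of Theorem \ref{wei_omegacat} is unavailable, and you then propose to salvage the induction on arity by appealing to a ``uniformity'' of quantifier-free formulas in the distinguished coordinate. As written this is a sketch, not an argument: you never actually produce the uniform family of $B$-formulas that the induction needs, and the modularity identity $\cl(A_0 c)\cap \cl(A_1 c)=\cl((A_0\cap A_1)c)$ does not by itself let you assemble fibrewise $Bc$-definitions into a single $B$-definition of $Y$ without the compactness-plus-finiteness device you just discarded. So the weak EI part is not established.

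The paper sidesteps the obstacle entirely with a reduct trick you are missing. By quantifier elimination, any $X\subset M^m$ that is $A_i$-definable in $\CM$ is defined by a quantifier-free formula mentioning only finitely many of the rotations $g_{1/n}$; hence $X$ is $A_i$-definable already in some fixed reduct $\CM_n$. The reducts $\CM_n$ are $\aleph_0$-categorical, and Fact \ref{wei_M_1_reduct} (itself a consequence of Theorem \ref{wei_omegacat}) says $\Th(\CM_n)$ has weak EI. Since $\acl_{\CM_n}(A_i)\subseteq\acl_{\CM}(A_i)=A_i$, each $A_i$ is $\acl_{\CM_n}$-closed, so $X$ is $\CM_n$-definable over $B=A_0\cap A_1$ by some $\psi(\x,\b)$ with $\b\in B$; the same formula witnesses $B$-definability in $\CM$. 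No induction on arity for $\CM$ is needed, and the failure of $\aleph_0$-categoricity for the full structure never enters. Together with your (correct) pregeometry argument and the DCC on algebraic closures of finite sets (again immediate from QE), this finishes via Fact \ref{character_rosy}. You should replace your inductive sketch for weak EI by this reduction to the reducts $\CM_n$.
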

\begin{proof}
By quantifier elimination, in the structure $\CM$ there is no infinite descending chain of
algebraic closures of
finite sets. It is enough to show that if $X\subset M^n$ is
$A_0(=\acl(A_0))$-definable and $A_1(=\acl(A_1))$-definable, then $X$ is $A_0\cap A_1(=B)$-definable.
(Then $X$ has a smallest algebraically closed set over which it is definable,
and $\Th(\CM)$ has weak
elimination of imaginaries.)

Let $A_i=\acl(A_i)=\cl(A_i)$ for $i=0,1$ and put $B=A_0\cap A_1$. Let $X\subset M^m$ be
$A_i$-definable in $\CM$. Then $X$ is definable over $A_i$ for $i=0,1$ in some reduct
$\CM_{n}$ of $\CM$. Since $\CM_{n}$ weakly eliminates imaginaries, $X$ is definable over $B$
in $\CM_{n}$ by a formula $\psi(\x,\b)$. Then
$X$ is $B$-definable in $\CM$ by the same formula $\psi(\x,\b)$.

By quantifier elimination, it is easily verified that the algebraic closure in $\CM$
gives a trivial pregeometry (i.e. $\acl(A)=\cup_{a\in A}\acl(\{a\})$). Thus, by Fact \ref{character_rosy}, $\Th(\CM)$ is a rosy
theory of thorn $U$-rank $1$.
\end{proof}
\noindent There is only one $1$-strong type over the empty set in $\CM$: $p_0(x)\equiv\{x=x\}$.

In $\CM$, for a fixed $a\in M$, we observe that the types in $S_1(a)$ correspond
to elements of the unit circle, where the points with rational spherical coordinates are
tripled. Using this observation, we compute the first homology group of
$p_0$ in $\CM$:
\begin{theorem}\label{h1_in_M_1}
In $\CM$, the first homology group of $p_0$ is isomorphic to $\BR/\BZ$.
\end{theorem}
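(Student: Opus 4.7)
The plan is to establish $H_1(p_0) \cong \BR/\BZ$ by constructing, via an explicit angle invariant, an isomorphism between $\aut(\CM)/K$ (which by Theorem \ref{kernel_derived} equals $H_1(p_0)$) and $\BR/\BZ$.

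First, for $a, b \in M$, I assign an angle $\theta(a,b) \in \BR/\BZ$ as follows. By quantifier elimination (Theorem \ref{QE_M_1}), $\tp(a,b)$ over $\emptyset$ is determined by the cyclic Dedekind cut of $b$ with respect to the orbit $\cl(a) = \{g_r(a) : r \in \BQ \cap [0,1)\}$. Such cuts are in bijection with $\BR/\BZ$ except that each rational $r \in \BQ/\BZ$ corresponds to three types ($b = g_r(a)$, $b$ just above, $b$ just below); I declare all three to have angle $r$. By construction $\theta$ is automorphism-invariant (since $\sigma \in \aut(\CM)$ preserves $S$ and commutes with every $g_{1/n}$), antisymmetric, and satisfies the cocycle identity $\theta(a,b)+\theta(b,c)=\theta(a,c)$. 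Fixing $a\in M$ and setting $\theta^*(\sigma):=\theta(a,\sigma(a))$ defines a group homomorphism $\theta^*: \aut(\CM) \to \BR/\BZ$, which is independent of the choice of $a$ (since $\BR/\BZ$ is abelian) and surjective (by saturation, for each $\rho \in \BR/\BZ$ we can find $b$ with $\theta(a,b)=\rho$, and then $\sigma \in \aut(\CM)$ with $\sigma(a)=b$ by homogeneity).

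By Theorem \ref{kernel_derived}, $H_1(p_0) \cong \aut(\CM)/K$ with $K=\ker(\varphi^*_a)$. Hence it suffices to show $\ker(\theta^*)=K$. For $K \subseteq \ker(\theta^*)$: given $\sigma\in K$ we have $[a,\sigma(a)]=0$ in $H_1(p_0)$, so Corollary \ref{endpt_classification} yields a sequence $(d_i)_{0\le i\le 2n+2}$ with $d_0=a$, $d_{2n+2}=\sigma(a)$, $*$-independent consecutive pairs, and a bijection $\tau$ of $\{0,\ldots,n\}$ with $d_{2i}d_{2i+1}\equiv d_{2\tau(i)+2}d_{2\tau(i)+1}$. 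Writing $\theta_j:=\theta(d_{j-1},d_j)$, invariance plus antisymmetry give $\theta_{2i+1}=-\theta_{2\tau(i)+2}$; summing over $i$ and using bijectivity of $\tau$ yields $\sum_{j=1}^{2n+2}\theta_j=0$, i.e.\ $\theta(a,\sigma(a))=0$. For $\ker(\theta^*)\subseteq K$: suppose $\theta^*(\sigma)=0$ and set $b=\sigma(a)$. Either $b=a$ (so $[a,b]=0$ trivially) or $(a,b)$ realizes one of the two ``infinitesimally close'' types collapsed to $0\in\BR/\BZ$. In the latter case, I seek $d_1\in M$ satisfying: (i) $d_1\notin \cl(a)\cup\cl(b)$ (making $\{a,d_1\}$ and $\{d_1,b\}$ $*$-independent); and (ii) no $g_r(d_1)$ lies strictly between $a$ and $b$ in the cyclic order, equivalently $d_1\notin \bigcup_{r\in\BQ}(g_{-r}(a),g_{-r}(b))$. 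By QE, condition (ii) forces $a$ and $b$ to determine the same cut in $\cl(d_1)$, so $(a,d_1)\equiv(b,d_1)$, and Corollary \ref{endpt_classification} with $n=0$ and $\tau=\id_{\{0\}}$ then yields $[a,b]=0$, i.e., $\sigma\in K$.

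The main technical point is the existence of such a $d_1$. Any finite subconjunction of conditions (i) and (ii) excludes only finitely many ``infinitesimally small'' intervals together with the countable set $\cl(a)\cup\cl(b)$, which leaves most of $M$ available; this partial type is therefore finitely satisfiable, so by saturation of $\CM$ it is realized. This completes the identification $\ker(\theta^*)=K$, and hence $H_1(p_0) \cong \BR/\BZ$ via $\theta^*$.
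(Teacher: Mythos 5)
Your proof is correct and follows essentially the same strategy as the paper's: define an angle invariant from the cyclic Dedekind cut of one realization in the $\BQ$-rotation orbit of another, show (via the sequence characterization of Corollary \ref{endpt_classification}, which repackages the chain-walk criterion) that a boundary shell must have total angle $0$, and conversely realize by saturation an intermediate point $d_1$ witnessing that angle $0$ forces a boundary. The paper proves the same equivalence as Theorem \ref{NSfor[s]=0inM} using the $S$-distance and the multivalued arithmetic of $\BR\cup\BQ^*$ from Appendix B, then defines $\Phi:H_1(p_0)\to(\BR\cup\BQ^*)/\BZ^*$; your reformulation as $\ker(\theta^*)=K$ via Theorem \ref{kernel_derived} is mathematically the same map but neatly bypasses the $+^*,-^*$ machinery by collapsing the three tripled types at each rational and working in $\BR/\BZ$ from the outset.
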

We start with defining a distance-like notion between two points on $M$. We fix an infinitesimal $\epsilon$. For a subset $Y\subset \BQ$, we define $Y^*:=Y\cup\{y\pm\epsilon| y\in Y \}$. We write $X_{\BQ}$ for $X\cap \BQ$ for a subset $X$ in $\BR$.
\begin{definition}\label{distance}
Let $a,b\in M$ be two elements. We define \em{the $S$-distance from $a$ to $b$},
denoted by $\sdist(a,b)$. The $S$-distance has values in $[0,1)\cup[0,1)^*_{\BQ}\cup\{1-\epsilon\}$. Let $r\in (0,1)_{\BQ}$ and $r'\in (0,1)\setminus \BQ$.
\be
	\item $\sdist(a,b)=0$ if $b=a$;
	\item $\sdist(a,b)=\epsilon$ if for all $s\in (0,1)_{\BQ}$, $\CM\models S(a,b,g_{s}(a))$;
	\item $\sdist(a,b)=1-\epsilon$ if for 
all $s\in (0,1)_{\BQ}$, $\CM\models S(g_{s}(a),b,a)$;
	\item $\sdist(a,b)=r$ if $b=g_{r}(a)$;
	\item $\sdist(a,b)=r-\epsilon$ if for $s\in (0,1)_{\BQ}$ with $s<r$, $\CM \models S(g_s(a),b,g_{r}(a))$;
	\item $\sdist(a,b)=r+\epsilon$ if for $t\in (0,1)_{\BQ}$ with $r< t$, $\CM \models S(g_{r}(a),b,g_{t}(a))$;

	\item $\sdist(a,b)=r'$ if for $s<t\in [0,1)_{\BQ}$ such that $s<r'<t$, $\CM \models S(g_s(a),b,g_t(a))$.	
\ee
\end{definition}
\noindent In Appendix B, using Dedekind cuts, we develop multivalued operations $+^*$ and $-^*$ to
make $\BR\cup\BQ^*$ a group-like structure.  Now, we extend the values of $S$-distance to
$\BR\cup\BQ^*$. Since $g_k=\id$ for all $k\in \BZ$, we write $\sdist(a,b)=r$ for
$r\in \BR\cup\BQ^*$ if $\sdist(a,b)=r'$, where $r'$ is the unique number in
$[0,1)\cup [0,1)_{\BQ}^*$ such that $r\in r'+^*n$ for some $n\in \BZ$. Then this values depend
only on the type of $(a,b)$, that is, for $a_0,a_1,b_0,b_1\in M$, if $a_0b_0\equiv a_1b_1$,
then $\sdist(a_0,b_0)=\sdist(a_1,b_1)$ (taking values in $[0,1)\cup[0,1)_{\BQ}^*$). Then the
following fact is easily verified:
\begin{fact}\label{direct-distance}
Let $a,b,c\in M$.
\be
	\item $\sdist(b,a)=1-^*\sdist(a,b)$.
	\item $\sdist(a,c)=\sdist(a,b)+^*\sdist(b,c)$ modulo $\BZ^*$, that is, $\sdist(a,b)+^*\sdist(b,c)-^*\sdist(a,c)\subset \BZ^* $.
\ee
\noindent By (1), $\sdist$ is not symmetric, that is, for some $a,b\in M$, $\sdist(a,b)\neq\sdist(b,a)$ and so it is called a directed distance.
\end{fact}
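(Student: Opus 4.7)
The plan is to prove both items by direct case analysis following the seven clauses of Definition \ref{distance}, using only two structural facts about $\CM$: the rotations $g_r$ form an action of $\BQ/\BZ$ on $M$ (so $g_r \circ g_s = g_{r+s \bmod 1}$ and $g_r^{-1} = g_{1-r}$), and the circular betweenness predicate $S$ is invariant under this action and reversed under swapping the outer arguments (since $S(a,b,c)$ is defined by the clockwise direction starting at $a$). Throughout, the multivalued operations $+^*, -^*$ on $\BR\cup\BQ^*$ built in Appendix B are designed exactly so that the arithmetic of infinitesimal perturbations matches what the ordering $S$ decides.

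For (1), I would verify clause by clause that swapping $a$ and $b$ flips the $S$-distance according to $r\mapsto 1-^*r$. The only nontrivial ingredient is the behaviour of $\epsilon$: if $b$ lies infinitesimally past $g_r(a)$ (so $\CM\models S(g_s(a),b,g_t(a))$ for all rational $s<r<t$ sufficiently close to $r$), then looking at the same arc from $b$'s side one finds $\CM\models S(g_{1-t}(b),a,g_{1-r}(b))$, hence $\sdist(b,a)=(1-r)-\epsilon$, matching $1-^*(r+\epsilon)$ by the definition of $-^*$. The boundary cases $\sdist(a,b)\in\{0,\epsilon,1-\epsilon\}$ and the irrational case $r'\in(0,1)\setminus\BQ$ are handled identically, using that no rotation $g_r$ moves $b$ to $a$ in the boundary cases.

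For (2), let $r=\sdist(a,b)$ and $s=\sdist(b,c)$. The geometric content is that $c$ lies in whatever arc around $a$ corresponds to rotating by $r+s$ modulo $1$; the multivaluedness of $+^*$ absorbs the finitely many possible infinitesimal outcomes. When both $r,s$ are rational, $c=g_s(b)=g_{r+s}(a)$ and $\sdist(a,c)=r+s\bmod 1$. When exactly one of them carries an $\epsilon$, say $r=r_0+\epsilon$ and $s\in\BQ$, the invariance of $S$ under $g_s$ propagates the infinitesimal displacement, giving $\sdist(a,c)=(r_0+s)+\epsilon\bmod\BZ$. When both carry infinitesimals with opposite signs (e.g.\ $r_0+\epsilon$ and $s_0-\epsilon$), the relative position of $c$ with respect to $g_{r_0+s_0}(a)$ is not decided by $r_0,s_0$ alone, which is exactly why $(r_0+\epsilon)+^*(s_0-\epsilon)$ is set-valued in Appendix B; each element of $\sdist(a,b)+^*\sdist(b,c)-^*\sdist(a,c)$ nevertheless lies in $\BZ^*$. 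The irrational cases collapse using the Dedekind-cut presentation of $+^*$.

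The main obstacle is purely bookkeeping: item (2) has roughly a $7\times 7$ case matrix for the pair $(\sdist(a,b),\sdist(b,c))$. The substantive work is concentrated in the few combinations where an $\epsilon$ and a $-\epsilon$ collide, and in checking that the Dedekind-cut definitions of $+^*,-^*$ from Appendix B do in fact output every arc position that the ordering $S$ allows; once this is spelled out for one representative collision the rest are obtained by symmetry under $r\leftrightarrow 1-r$ supplied by item~(1) and by the $\BQ$-invariance of $S$.
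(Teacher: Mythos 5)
The paper offers no proof of this fact---it is stated as ``easily verified''---so your direct case analysis over the seven clauses of Definition \ref{distance} is precisely the intended verification, and the structural observations you lean on (the rotations $g_r$ form a $\BQ/\BZ$-action, the circular betweenness $S$ is rotation-invariant and reverses under swapping its outer arguments) are the right tools. One small exposition slip in your treatment of item (1): the condition you quote, $S(g_s(a),b,g_t(a))$ for rational $s<r<t$, is the defining clause for \emph{irrational} $r$; for rational $r$ with $\sdist(a,b)=r+\epsilon$ the defining clause is instead $S(g_r(a),b,g_t(a))$ for rational $t>r$, and the condition you wrote is also satisfied when $b=g_r(a)$, whereas the conclusion $\sdist(b,a)=(1-r)-\epsilon$ requires $b\neq g_r(a)$. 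This does not affect the validity of the argument, only the parenthetical motivation. For item (2), your identification of the $\epsilon$/$-\epsilon$ collision cases (together with the case where two irrationals sum to a rational) as the only places where $+^*$ becomes set-valued---and hence the only places needing substantive checking rather than routine verification---is exactly what makes the $7\times7$ case matrix tractable, and reducing the remaining cases by the symmetry from item (1) and $\BQ$-invariance of $S$ is a sensible and correct way to discharge the bookkeeping.
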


\noindent Now, we assign to each $1$-simplex $f$ a value $n_f$ in $\BR\cup\BQ^*$ as follows.
There are $a,b\in M$ such that $[a,b]=f$; we define $n_f$ as $\sdist(a,b)$.
Then $n_f$ is well-defined, that is, it does not depend on the choice of $a$ and $b$, because if
$a_i,b_i\in M$ satisfy $[a_0,b_0]=[a_1,b_1]=f$, then $a_0 b_0\equiv a_1 b_1$ and
$\sdist(a_0,b_0)=\sdist(a_1,b_1)$. We also assign to each $1$-shell $s=f_{01}+f_{12}-f_{02}$
a multivalue $n_s$ in $\BR\cup\BQ^*$ as follows: $n_s=n_{f_{01}}+^*n_{f_{12}}-^* n_{f_{02}}$.
This value is also related to the distance of endpoints. Let $(a,a')$ be an endpoint pair
of $s$. Then $\sdist(a,a')=n_s$ modulo $\BZ^*$. Using this assignments, we give a
necessary and sufficient condition for a $1$-shell to be the boundary of a $2$-chain:

\begin{theorem}\label{NSfor[s]=0inM}
A 1-shell $s=f_{12}-f_{02}+f_{01}$ is the boundary of a 2-chain in $p$ if and only if
\begin{center}
$n_s=n_{01}+^*n_{12}+^*n_{20}\subset\BZ^*$,
\end{center}
where $n_{01}=n_{f_{01}},\ n_{12}=n_{f_{12}},\ n_{20}=-^* n_{f_{02}}$. Moreover,
it is equivalent to the condition that the two endpoints of $s$ are Lascar equivalent
over $\emptyset$.
\end{theorem}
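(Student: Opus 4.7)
My plan is to establish the triple equivalence of (i) $s$ being the boundary of a $2$-chain, (ii) $n_s\subseteq\BZ^*$, and (iii) every endpoint pair $(a,a')$ of $s$ satisfying $a\equiv^L a'$. The direction (iii) $\Rightarrow$ (i) is Fact \ref{Lascar_zero}, so it remains to close the cycle (i) $\Rightarrow$ (ii) $\Rightarrow$ (iii); the main ``iff'' of the theorem is then (i) $\Leftrightarrow$ (ii), and the ``moreover'' clause is (i) $\Leftrightarrow$ (iii).

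For (i) $\Rightarrow$ (ii), I invoke Corollary \ref{endpt_classification} to obtain a sequence $(d_i)_{0\le i\le 2n+2}$ of realizations of $p_0$ with $d_0=a$, $d_{2n+2}=a'$, consecutive pairs $*$-independent, and a bijection $\sigma$ of $\{0,\ldots,n\}$ such that $d_{2i}d_{2i+1}\equiv d_{2\sigma(i)+2}d_{2\sigma(i)+1}$ for all $i$. Iterating Fact \ref{direct-distance}(2) along the chain yields
\[
\sdist(a,a')\equiv\sum_{i=0}^{n}\sdist(d_{2i},d_{2i+1})+\sum_{i=0}^{n}\sdist(d_{2i+1},d_{2i+2})\pmod{\BZ^*}.
\]
Since $\sdist$ depends only on the $2$-type of its arguments, the permutation condition together with Fact \ref{direct-distance}(1) gives $\sdist(d_{2i},d_{2i+1})\equiv 1-\sdist(d_{2\sigma(i)+1},d_{2\sigma(i)+2})\pmod{\BZ^*}$. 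Summing in $i$ and reindexing by $\sigma$, the two sums above combine to $n+1\in\BZ\subseteq\BZ^*$, whence $\sdist(a,a')\in\BZ^*$ and therefore $n_s\subseteq\BZ^*$.

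The substantive step is (ii) $\Rightarrow$ (iii). From $n_s\subseteq\BZ^*$ I read off $\sdist(a,a')\in\{0,\epsilon,1-\epsilon\}$; if $a=a'$ we are done, and the two infinitesimal cases are symmetric by Fact \ref{direct-distance}(1), so WLOG $\sdist(a,a')=\epsilon$. I aim to produce a small elementary submodel $M\prec\CM$ with $\tp(a/M)=\tp(a'/M)$, which forces $a\equiv^L a'$ at Lascar distance one. By quantifier elimination (Theorem \ref{QE_M_1}) and $\acl=\cl$ in the home sort, this type is controlled by the values $\sdist(g_r(b),a)$ for $b\in M$ and $r\in\BQ\cap[0,1)$. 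Hence it will suffice to ensure that no element of $M$ equals any $g_r(a)$ or lies infinitesimally close to one: under such a choice each $\sdist(g_r(b),a)$ is a standard irrational, left fixed by the infinitesimal shift $a\mapsto a'$ in the value set $\BR\cup\BQ^*$, so $\tp(a/M)=\tp(a'/M)$.

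To build such an $M$, I take any countable $M_0\prec\CM$; because every (not necessarily rational) rotation of the circle preserves $S$ and commutes with each $g_r$, the rotate $g_s(M_0)$ is an elementary submodel for every $s\in\BR$. The parameters $s$ for which some $g_s(b)$ ($b\in M_0$) coincides with or is infinitesimally close to some $g_r(a)$ ($r\in\BQ$) form the countable union $\bigcup_{b\in M_0,\,r\in\BQ}\{r+\sdist(b,a),\ r+\sdist(b,a)\pm\epsilon\}$; saturation of $\CM$ supplies a standard irrational $s$ outside this set, and $M:=g_s(M_0)$ is then as required. I expect the main obstacle to be pinning down the ``infinitesimal shift fixes a standard irrational value'' claim precisely in the multivalued arithmetic of Appendix~B, and verifying that it genuinely delivers $a\equiv_M a'$; once that is in place the three implications close up and the theorem follows.
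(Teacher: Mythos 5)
Your overall plan---establishing the cycle (i) $\Rightarrow$ (ii) $\Rightarrow$ (iii) $\Rightarrow$ (i)---is a clean reorganization, and two of the three legs are solid. The direction (i) $\Rightarrow$ (ii) via Corollary \ref{endpt_classification} and iterating Fact \ref{direct-distance} matches the paper's forward direction and your bookkeeping with the permutation $\sigma$ is correct; (iii) $\Rightarrow$ (i) is indeed just Fact \ref{Lascar_zero}. You also avoid the paper's explicit construction of a $2$-chain in the ($\Leftarrow$) direction by going through Lascar equivalence, which is a pleasant economy.

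The gap is in (ii) $\Rightarrow$ (iii), and it is not cosmetic. You take an arbitrary countable $M_0\prec\CM$ and pass to ``the rotate $g_s(M_0)$ for $s\in\BR$.'' But $\CM$ is a saturated circle, not $\BR/\BZ$: the only rotations present in the structure are the $g_r$ with $r\in\BQ$, and for irrational $s$ there is no function $g_s$ on $\CM$ at all. Over a fixed base point $a$, the would-be image ``at $S$-distance $s$'' is a complete type realized by a set of saturated size, with no canonical choice; and if $M_0$ contains nonstandard elements (which an arbitrary countable elementary submodel certainly may), the expression $g_s(M_0)$ has no meaning whatsoever. Relatedly, your ``bad set'' $\bigcup_{b,r}\{r+\sdist(b,a),\ r+\sdist(b,a)\pm\epsilon\}$ mixes real parameters $s$ with the multivalued $\BR\cup\BQ^*$ arithmetic of Appendix B, so it is not literally a countable subset of $\BR$ that saturation can dodge. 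The underlying intention---produce $M\prec\CM$ all of whose elements sit at standard irrational $S$-distance from $a$, so that $\tp(a/M)=\tp(a'/M)$---is sound, but you would need either to fix an embedding of the standard circle and take $M_0=\BQ/\BZ$ inside it, or to build $M$ by a back-and-forth/inductive argument.

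The paper sidesteps all of this with a lighter device: using compactness it produces a \emph{single} element $d$ satisfying the partial type $\Sigma(x)=\{s<\sdist(x,a)<t\leftrightarrow s<\sdist(x,a')<t\}_{s<t\in[0,1]_\BQ}$, hence $a\equiv_d a'$; since $\cl(d)=\dcl(d)$ is itself a countable dense substructure closed under the rotations, QE gives $\cl(d)\prec\CM$, and $a\equiv_{\cl(d)}a'$ yields $a\equiv^L a'$ at Lascar distance one. I would replace your model-rotation step with this argument (or with a careful direct construction of $M$), after which your cyclic organization of the three equivalences goes through.
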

\begin{proof}
($\Rightarrow$) Let $\alpha$ be a 2-chain with boundary $s$. By Remark \ref{chain-walk},
we may assume that $\alpha=\sum_{i=0}^{2n}\limits (-1)^i a_i$ is a chain-walk from $f_{01}$
to $f_{12}$ with $\supp(\alpha)=\{0,1,2\}$. Let $[3]=\{0,1,2\}$. By Theorem
\ref{improved_classification} and the extension axiom, there are independent elements $b$ and $d_0,d_1,\cdots,d_{2n+2}$ such that

\begin{itemize}
	\item $a_i([3])\equiv [bd_{i}d_{i+1}]_{[3]}$ if $i$ is even, and $a_i([3])\equiv [bd_{i+1}d_{i}]_{[3]}$ if $i$ is odd;
	\item For some even number $0 \le i_0\le 2n$, $[d_{i_0}d_{i_0+1}]_{\{1,2\}}\equiv f_{12}(\{1,2\})$; and
	\item For each even number $0 \le j_0\neq i_0 \le 2n$, there is an odd number $0\le j_1\le 2n$ such that $[d_{j_0}d_{j_0+1}]_{\{1,2\}}\equiv[d_{j_1+1}d_{j_1}]_{\{1,2\}}$.
\end{itemize}

\noindent Then $\sdist(d_1,d_0)+^* \sdist(d_0,d_{2n+2})=-^* n_{01}-^* n_{20}$ and by Fact \ref{direct-distance} (1), $\sdist(d_1,d_2)+^* \sdist(d_2,d_3)+^*\cdots+^* \sdist(d_{2n+1},d_{2n+2})=n+^* n_{12}$. By Fact \ref{direct-distance} (2), $\sdist(d_1,d_{dn+2})\in (-^* n_{01}-^* n_{20})\cap (n+^* n_{12})$. Since $\{0\}^*=(-^* n_{01}-^* n_{20})+^*(n_{01}+^*n_{20})$ and $(n+^* n_{12})+^*(n_{01}+^*n_{20})=n+^*(n_{01}+^*n_{12}+^*n_{20})$,
these two equations imply that $n+^*(n_{01}+^*n_{12}+^*n_{20})\subset \{0\}^*$.
Therefore, $n_{01}+^*n_{12}+^*n_{20}\subset \{0\}^*-^* \{n\}^*=\{-n\}^*$ for $n\in \BN\subset \BZ$.\\

($\Leftarrow$) Suppose $n_{01}+^*n_{12}+^*n_{20}\subset\{n\}^*$ for some $n\in \BZ$. There are independent elements
$a,b,c,a'$ such that
$$[ab]_{\{0,1\}}=f_{01}(\{0,1\}),\ [bc]_{\{1,2\}}=f_{12}(\{1,2\}),\ [a'c]_{\{0,2\}}=f_{02}(\{0,2\}).$$
So, $\sdist(a,b)=n_{01},\ \sdist(b,c)=n_{12},\ \sdist(c,a)=n_{20},$ and $\sdist(a,a')\in n_{01}+^*n_{12}+^*n_{20}$. Thus $\sdist(a,a')\in \{n\}^*$ and $\sdist(a,a')\in \{0\}^*$. Since $\{a,a'\}$ is independent, $\sdist(a,a')\in \{0\}^*\setminus \{0\}$, that is, $\sdist(a,a')=\epsilon$ or $\sdist(a,a')=1-\epsilon$.

We will find $d\in M$ such that $a\equiv_d a'$ and $d\indo abca'$, where $\indo$ is the thorn-forking independence. Consider a partial type $\Sigma(x)=\{s<\sdist(x,a)<t\leftrightarrow s<\sdist(x,a')<t\}_{s<t\in [0,1]_{\BQ}}$. Consider finitely many pairs $(s_i,t_i)$ with $s_i<t_i$ and a formula $$\bigwedge (s_i<\sdist(x,a)<t_i\leftrightarrow s_i<\sdist(x,a')<t_i).$$
We may assume $s_i \le s_0 < t_0 \le t_i$. It is enough to show that the formula
$$s_0<\sdist(x,a)<t_0\leftrightarrow s_0<\sdist(x,a')<t_0$$
is satisfiable. Suppose the formula $s_0<\sdist(x,a)<t_0$ is satisfiable.
Then, there is a pair $(s,t)$ such that $s_0<s<t<t_0$ and $s<\sdist(x,a)<t$ is satisfiable.
Let $e\in M$ be an element independent from $a$ such that $s<\sdist(e,a)<t$ holds.
Since $\sdist(a,a')\in \{0\}^*\setminus \{0\}$, there is a pair $(s',t')$ such
that $s'<t'$, $s_0<s+s'<t+t'<t_0$, and $s'<\sdist(a,a')<t'$. Then, $s<\sdist(e,a)<t$
and $s'<\sdist(a,a')<t'$
imply $s+s'<\sdist(e,a')<t+t'$. Since
$s_0<s+s'<t+t'<t_0$, $s_0<\sdist(e,a')<t_0$ and $s_0<\sdist(x,a')<t_0$ is satisfiable. By the same argument,
$s_0<\sdist(x,a')<t_0\rightarrow s_0<\sdist(x,a)<t_0$.

Therefore, there is $d\in M$ such that $\Sigma(d)$ and $\sdist(d,a)=\sdist(d,a')$. Moreover,
we may assume that $\{a,b,c,a',d\}$ is independent by taking $d\indo_{aa'}bc$.
Consider the 2-chain $\alpha=a_0+a_1-a_2$, where
\begin{itemize}
	\item $\supp(a_0)=\{0,1,3\}$, $\supp(a_1)=\{1,2,3\}$, and $\supp(a_2)=\{0,2,3\}$;
	\item $a_0(\{0,1,3\})=[a b d]_{\{0,1,3\}}$, $a_1(\{1,2,3\})=[bcd]_{\{1,2,3\}}$, and $a_2(\{0,2,3\})=[a' cd]_{\{0,2,3\}}$;
	\item $a_0\restriction\CP(\{0,1\})=f_{01}$, $a_1\restriction\CP(\{1,2\})=f_{12}$, and $a_2\restriction\CP(\{0,2\})=f_{02}$; and
	\item $a_0\restriction\CP(\{0,3\})=a_2\restriction\CP(\{0,3\})$, $a_0\restriction\CP(\{1,3\})=a_1\restriction\CP(\{1,3\})$, and $a_1\restriction\CP(\{2,3\})=a_2\restriction\CP(\{2,3\})$.
\end{itemize}

\noindent Then $\Bd\alpha=f_{01}+f_{12}-f_{02}+(a_2\restriction\CP(\{0,3\})-a_0\restriction\CP(\{0,3\}))=f_{01}+f_{12}-f_{02}$.\\

We show the `moreover' part. Let $a,a'$ be endpoints of $s$. If $a\equiv^L a'$, then $s$ is
a boundary of a $2$-chain, and $n_s\subset \{n\}^*$ for some $n\in\BZ$. Conversely, we assume
that $n_s\subset \{n\}^*$ for some $n\in \BZ$. In the proof of the right-to-left
implication, we found
$d\in M$ such that $a\equiv_d a'$. Consider the substructure
$\cl(d)=\dcl(d)=\acl(d)$ generated by $d$. Then $a\equiv_{cl(d)} a'$, and $a\equiv^L a'$.
\end{proof}
Now we are ready to prove Theorem \ref{h1_in_M_1}. Define a map
$\Phi:\ H_1(p_0)\rightarrow (\BR\cup\BQ^*)/\BZ^*$ by sending $[s]$ to
$n_s+^*\BZ^*$ (note that $(\BR\cup\BQ^*)/\BZ^*\cong \BR/\BZ$, as shown in Appendix B).
It is easy to see that
this map is surjective. Since for an endpoint pair $(a,b)$ of
$s$, $n_s+^*\BZ^*=\sdist(a,b)+^*\BZ^*$, the map $\Phi$ depends only on the endpoint pairs
of $1$-shells. By Theorem \ref{endpt_gpstr}, given $1$-shells $s_0$ and $s_1$, and endpoint
pairs $(a,b)$ and $(b,c)$ of $s_0$ and $s_1$, there is a $1$-shell $s$ such
that $[s]=[s_0]+[s_1]$ and $(a,c)$ is an endpoint pair of $s$, so the  map $\Phi$ is a
group homomorphism. Moreover, by Theorem \ref{NSfor[s]=0inM} it is injective, and therefore
it is an isomorphism. This completes the proof of Theorem \ref{h1_in_M_1}.
\section{Appendix}
\subsection{Appendix A} We show that the possible
number of equivalence classes of a bounded type-definable 
equivalence relation on a strong type is $1$ or at least $2^{\aleph_0}$. Let $T(=T^{eq})$ be any theory in
a language $\CL$ and let $\CM$ be a monster model of $T$. Fix a small subset $A=\acl(A)$, and
choose a strong type $p(x)$ over $A$ with $x$ of possibly infinite length.
\begin{theorem}
Let $E(x,y)$ be a bounded $A$-type-definable equivalence relation on $p(x)$, and denote
 the set of $E$-classes on $p$ by $p/E$. Then, \begin{center}
$|p/E|=1$ or $|p/E|\ge 2^{\aleph_0}$.
\end{center}
\end{theorem}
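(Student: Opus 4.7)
The plan is to equip $p/E$ with the \emph{logic topology} so that it becomes a compact Hausdorff homogeneous space, and then to apply a standard topological dichotomy.

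First, I would declare a subset $C\subseteq p/E$ closed precisely when its preimage in $p(\CM)$ under the quotient map $\pi\colon p(\CM)\to p/E$ is $A$-type-definable. Boundedness of $E$ together with the saturation of $\CM$ give, by a standard compactness argument, that this topology is compact; Hausdorffness is automatic because $E$ itself is $A$-type-definable (so the diagonal of $p/E\times p/E$ is closed). Since $E$ is $A$-invariant and $p$ is a complete type over $A=\acl^{\eq}(A)$, the group $\Aut(\CM/A)$ acts continuously on $p/E$, and the action is transitive because $\CM$ is strongly homogeneous over $A$. Hence $p/E$ is a topologically homogeneous compact Hausdorff space.

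Now I would split into two cases. If $p/E$ has an isolated point, homogeneity makes every singleton open, so $p/E$ is discrete, and by compactness it is finite. In that case every $E$-class is clopen in $p/E$, so its preimage in $p(\CM)$ is simultaneously type-definable and co-type-definable over $A$; a further compactness argument collapses the two complementary partial types into a single formula over $A$ that isolates the class relatively in $p$. But $p$ is a complete type over $A$, so exactly one such formula is consistent with $p$, forcing $|p/E|=1$. If on the other hand $p/E$ has no isolated point, I would run the classical Cantor-scheme construction inside $p/E$: using Hausdorffness and the absence of isolated points, recursively build a binary tree of pairwise disjoint non-empty closed subsets, and along each branch use compactness of $p/E$ to extract a distinct point; this yields $|p/E|\geq 2^{\aleph_0}$.

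The main obstacle is the infinite-arity setting: one must verify that the logic topology on the quotient of a type in possibly infinitely many variables is still genuinely compact, and that the standard reduction from a pair of complementary type-definable subsets of $p$ to a single defining formula over $A$ still works. Both of these rest on saturation of $\CM$ and on $p$ being a complete type over $A$, which are essentially the ingredients already used in the proof of Newelski's theorem cited just before the statement.
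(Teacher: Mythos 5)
Your conceptual direction — transfer to a compact Hausdorff homogeneous quotient and invoke the standard topological dichotomy — is a legitimate and genuinely different route from the paper's, which argues combinatorially: in the finite case it bootstraps relative definability of $E$ on $p$ up to a finite $A$-definable equivalence relation $E'$ agreeing with $E$ on $p$ and then uses that $p$ is a strong type over $A=\acl^{\eq}(A)$; in the infinite case it builds a binary tree of formulas directly by compactness (Claims A.1 and A.2 in the appendix). Your route, when set up correctly, is shorter conceptually.

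However, there is a fatal error in the way you declare the topology. You define $C\subseteq p/E$ to be closed iff $\pi^{-1}(C)$ is $A$-type-definable. An $A$-type-definable set is in particular $\Aut(\CM/A)$-invariant, and $\Aut(\CM/A)$ acts transitively on $p(\CM)$ (since $p$ is a complete type over $A$ and $\CM$ is strongly homogeneous). Hence the only $A$-invariant subsets of $p(\CM)$ are $\emptyset$ and $p(\CM)$, so the only closed subsets of $p/E$ under your definition are $\emptyset$ and $p/E$ itself: the topology is indiscrete, and none of Hausdorffness, the isolated-point dichotomy, or the Cantor-scheme step is available. The logic topology you want is the one whose closed sets are the $C$ with $\pi^{-1}(C)$ type-definable over some (equivalently, any) small \emph{model} $M\supseteq A$; there, singletons $\{[a]_E\}$ are closed because $\tp(a/M)=\tp(b/M)$ implies $E(a,b)$ when $E$ is bounded and $A$-invariant, and one checks the topology is independent of $M$ so that $\Aut(\CM/A)$ still acts by homeomorphisms.

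Even after this repair there is a secondary gap in your Case 1. A clopen singleton of $p/E$ has preimage relatively definable in $p(\CM)$ by a formula with parameters from $M$, not from $A$, so completeness of $p$ over $A$ does not immediately decide which class is realized. To land at a formula over $A$ one needs the observation that $\Aut(\CM/A)$ permutes the finitely many classes, so each relatively-defining formula has only finitely many $A$-conjugates and hence can be replaced by an $\acl^{\eq}(A)$-definable one; combined with $A=\acl^{\eq}(A)$ and $p$ being a strong type this forces $|p/E|=1$. This is precisely the content the paper establishes via its explicit finite definable equivalence relation $E'$, so your plan is not actually shorter at this step — it just relocates the same work.
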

\begin{proof}
For convenience, we assume that $A=\emptyset$. We consider two cases:\\

Case 1. $p/E$ is finite: Let $a_0,\cdots, a_n\models p$ be
representatives of all distinct classes in $p/E$, and put $\a=(a_0,a_1,\ldots ,a_n)$.
At first, we show that $E$ is relatively definable on $p$. Consider two types
$E(x,a_0)$ and $\bigvee_{i>0}E(x,a_i)$ partitioning $p$. By compactness,
$p(x)\models E(x,a_0)\leftrightarrow \phi(x,a_0)$ for some formula $\phi(x,z)$ such that
$E(x,a_0)\models \phi(x;a_0)$. Since
$a_0\equiv a_i$, $p(x)\models E(x,a_i)\leftrightarrow \phi(x,a_i)$ for all $i\le n$. Thus,
$p(x)\wedge p(y)\models E(x,y)\leftrightarrow \psi(x,y;\a)$, where
$\psi(x,y;\z)=\bigvee_i [\phi(x,z_i)\wedge\phi(x\y,z_i)]$. Since $E$ is
invariant, $p(x)\wedge p(y)\wedge \psi(x,y;\z)\wedge \tp(\a)(\z)\models \psi(x,y;\z)(\leftrightarrow E(x,y))$.
By compactness, there is a formula $\psi'(\z)$ in $\tp(\a)(\z)$ such that
$p(x)\wedge p(y)\wedge \psi(x,y;\z)\wedge \psi'(\z)\models \psi(x,y;\a)$.
Take $\theta(x,y)\equiv \exists \z (\psi'(\z)\wedge \psi(x,y;\z))$. Then
$p(x)\wedge p(y)\models \theta(x,y)\leftrightarrow \psi(x,y;\a)$. Therefore, $E$ is relatively
definable on $p$ by the formula $\theta$. Moreover, we may assume $\theta(x,y)$ is a reflexive
and symmetric relation by replacing it with $x=y\vee (\theta(x,y)\wedge \theta(y,x))$.


Next, we find a finite $\emptyset$-definable equivalence relation $E'$ such that $p(x)\wedge p(y)\models E(x,y)\leftrightarrow E'(x,y)$. Since $E$ is an equivalence relation,
$$\begin{array}{c c l}
p(x)\wedge p(y)\wedge p(z)&\models& \bigvee_i\limits \theta(x,a_i)\wedge\bigvee_i\limits \theta(y,a_i)\wedge\bigvee_i\limits \theta(z,a_i)\\
&\wedge& \bigwedge_i \limits (\theta(x,a_i)\rightarrow \bigwedge_{i\neq j}\limits \neg \theta(x,a_j))\\
&\wedge&\bigwedge_i \limits (\theta(y,a_i)\rightarrow \bigwedge_{i\neq j}\limits \neg \theta(y,a_j))\\
&\wedge&\bigwedge_i \limits (\theta(z,a_i)\rightarrow \bigwedge_{i\neq j}\limits \neg \theta(z,a_j)) \\
&\wedge&(\theta(x,y)\wedge\theta(y,z)\rightarrow \theta(x,z)).\ (*)
\end{array}$$
Again by compactness, there is $\delta(x)\in p(x)$ such that $$\delta(x)\wedge\delta(y)\wedge\delta(z)\models (*).$$ Define a definable equivalence relation $E'(x,y)\equiv [\neg \delta(x)\wedge \neg \delta(y)]\vee [\delta(x)\wedge \delta(y)\wedge \forall z(\delta(z)\rightarrow (\theta(z,x)\leftrightarrow \theta(z,y)))]$.
\begin{claim}\label{fin_equiv_relation}
The equivalence relation $E'$ is finite.
\end{claim}
\begin{proof}
First, $\neg\delta(x)$ defines an $E'$-class. We show that on $\delta$, the $E'$-classes are of the form of $\theta(x,a_i)\wedge \delta(x)$. By the choice of $\delta$, it is partitioned by $\{\theta(x,a_i)\wedge \delta(x)\}_{i\le n}$.\\

1) We show that $\models \theta(x,a_i)\wedge \delta(x)\rightarrow E'(x,a_i)$: Choose $b\models \theta(x,a_i)\wedge \delta(x)$. Take $c\models \delta(x)\wedge \theta(x,a_i)$. Since $\theta$ is transitive on $\delta$ and $\theta(b,a_i)$ holds, $\theta(c,b)$ holds. Conversely, if $d\models \delta(x)\wedge \theta(x,b)$, then by transitivity of $\theta$ on $\delta$, $\theta(d,a_i)$ holds. Therefore, $E'(b,a_i)$ holds.\\

2) For $i\neq j$, $\neg E'(a_i,a_j)$: Suppose that for some $i\neq j$, $E'(a_i,a_j)$ holds.
Then $\theta(a_i,a_j)$ holds, but it is impossible, since $a_i,a_j\models p$ and $\theta$ coincides with
$E$ on $p\times p$.\\

\noindent By 1) and 2), the $E'$-classes are of the form $\theta(x,a_i)\wedge \delta(x)$ or
$\neg\delta(x)$, so $E'$ is a finite equivalence relation.
\end{proof}

\noindent By the proof of Claim \ref{fin_equiv_relation}, $E'$ and $E$ give the same
equivalence relation on $p\times p$. Since $E'$ is finite and $p$ is a strong type,
$p/E=p/E'$ and there is only one $E$-class in $p$.\\

Case 2. $p/E$ is infinite. Let $\kappa=|p/E|$. If $E$ is definable,
then by compactness, $|p/E|\ge \kappa'$ for any small $\kappa'$ and $E$ is not bounded.
So $E$ is not definable but type-definable; write $E(x,y)\equiv \bigwedge_{i<\lambda}\limits \phi_i(x,y)$, where each
$\phi_i(x,y)$ is a formula and $\lambda$ is an infinite cardinal. Furthermore we assume that for each $i,j<\lambda$ there is $k<\lambda$ such that $\phi_k(x,y)\equiv \phi_i(x,y)\wedge \phi_j(x,y)$. We may assume $\phi_i(x,y)$
is reflexive and symmetric (by replacing it with $x=y\vee(\phi_i(x,y)\wedge \phi_i(y,x))$) for each $i<\lambda$.
Let $\{a_k\models p\}_{k<\kappa}$ be a set of representatives of all $E$-classes.

\begin{claim}\label{inf_classes_in_phi}
For each $i<\lambda$ and $k<\kappa$, $\phi_i(x,a_k)(\CM)$ contains infinitely many $E$-classes.
\end{claim}
\begin{proof}
Fix $i<\lambda$. By compactness, there are finitely many $k_0<k_1<\cdots<k_n$ such that
$p\models \bigvee_j \phi_i(x,a_{k_j})$. By the Pigeonhole Principle, some $\phi_i(x,a_{k_l})$
contains infinitely many $a_k$'s so that $\phi_i(x,a_{k_l})$
contains infinitely many $E$-classes. Since $a_n\equiv a_m$ for all $n,m<\kappa$ and $E$ is invariant, each $\phi_i(x,a_k)$ contains infinitely many $E$-classes.
\end{proof}

\begin{claim}\label{disj_phi_in_phi}
For each $i<\lambda$ and $k<\kappa$, there are $j<\lambda$ and $k_0,k_1<\kappa$ such that $$\models \forall x[(\phi_j(x,a_{k_0})\vee\phi_j(x,a_{k_1}))\rightarrow \phi_i(x,a_k)]\wedge [\neg \exists x (\phi_j(x,a_{k_0})\wedge \phi_j(x,a_{k_1}))].$$
\end{claim}
\begin{proof}
Fix $i<\lambda$ and $k<\kappa$. By Claim \ref{inf_classes_in_phi}, $\phi_i(x,a_k)$ contains infinitely many $E$-classes. Choose two different $E$-classes in $\phi_i(x,a_k)$ and let $a_{k_0}$ and $a_{k_1}$ be representatives of two classes respectively. Since $E(x,a_{k_0})(\CM)$ and $E(x,a_{k_1})(\CM)$ 
are disjoint, by compactness, for some $j_0,j_1<\lambda$, $\phi_{j_0}(x,a_{k_0})(\CM)$ and $\phi_{j_1}(x,a_{k_1})(\CM)$ are disjoint subsets of $\phi_i(x,a_k)(\CM)$. Take $j<\lambda$ such that $\phi_j(x,y)\equiv \phi_{i}(x,y)\cap \phi_{j_0}(x,y)\cap \phi_{j_1}(x,y)$ and we are done.
\end{proof}
\noindent By Claim \ref{inf_classes_in_phi}, \ref{disj_phi_in_phi} and the fact that the
cofinality of $\lambda$ is at least $\aleph_0$, we get a binary tree
$\CB :\ 2^{< \omega} \rightarrow \omega\times \kappa$ such that for each
$b\in 2^{<\omega}$, $\CB(b \upSmallFrown 0)=(j,k_0)$ and $\CB(b\upSmallFrown 1)=(j,k_1)$,
where $j<\omega$ and $k_0,k_1<\kappa$ are given by the Claim
\ref{disj_phi_in_phi} for $(i,k):=\CB(b)$. Then, for each $\tau \in 2^{\omega}$, we obtain a set of
formulas $\{\phi_{i(\tau\restriction n )}(x,a_{k(\tau\restriction n)})\}$, where
$\CB(\tau\restriction n)=(i(\tau\restriction n ),k(\tau\restriction n))$ for each
$n\in \omega$. By the choice of $\CB$, for $\tau_0\neq \tau_1 \in 2^{\omega}$,
$\bigcap_n \phi_{i(\tau_0\restriction n )}(x,a_{k(\tau_0 \restriction n)})(\CM)$ and
$\bigcap_n \phi_{i(\tau_1 \restriction n )}(x,a_{k(\tau_1\restriction n)})(\CM)$ are disjoint,
and each of them contains at least one $E$-class. Thus, $p/E$ has at least $2^{\aleph_0}$ many elements.
\end{proof}

\subsection{Appendix B}
We shall see how to recover the ordered group  $(\BR,+)$ of real numbers from a dense linear order
extending $(\BQ,<)$ using Dedekind cuts. Consider the language
$\CL_{od,\BQ}=\{<\}\cup\{r\}_{r\in\BQ}$ and an $\CL_{od,\BQ}$-structure
$\CU=(U,<,r : r\in \BQ)$ which is a saturated dense linear order extending
$(\BQ,<)$. Then $\Th(\CU)$ has quantifier elimination.

Consider $S_1(\emptyset)$, the space of $1$-types over the empty set (which we will denote just by $S_1$). By quantifier elimination, any $1$-type $p$ is equivalent to a type of one of the following forms (where $r\in \BQ$ and $r'\in \BR\sm\BQ$):
\be
	\item $\{x=r\}$;
	\item $\{l<x<r|\ l<r\}$;
	\item $\{r<x<u|\ r<u\}$; and
	\item $\{l<x<u|\ l<r'<u\}$.
\ee
For a subset $Y\subset \BQ$, we write $Y^*:=Y\cup\{y\pm\epsilon|\ y\in Y\}$, where $\epsilon$ is an infinitesimal. So we can identify $S_1$ with the set $\BR\cup \BQ^*$ in the following way :
For $r\in \BQ$ and $r'\in \BR\sm\BQ$,
\be
	\item $\{x=r\}\leftrightarrow r$;
	\item $\{l<x<r|\ l<r\}\leftrightarrow (r-\epsilon)$;
	\item $\{r<x<u|\ r<u\}\leftrightarrow (r+\epsilon)$; and
	\item $\{l<x<u|\ l<r'<u\}\leftrightarrow r'$.
\ee
Next, we define a group-like structure on $S_1$. Define a plus-like operation $+^* : S_1\times S_1 \rightarrow \CP(S_1)$ as follows :
$$p_1 +^* p_2:=\{p|\ p\models (l_1+l_2<x<u_1+u_2),\ p_i\models l_i<x<u_i\},$$
and define a minus-like operation $-^* : S_1 \rightarrow S_1$ as follows:
$$(-^* p):=\{-u<x<-l|\ p\models l<x<u\}.$$
\noindent We extend $+^*$ and $-^*$ to operations defined on $\CP(S_1)$: For $A,B\subset S_1$, \begin{center}
$A+^* B:=\bigcup_{a\in A,b\in B}\limits a+^* b$, and $(-^* A):=\bigcup_{a\in A}\limits (-^* a)$.
\end{center}
We identify each element $a\in S_1$ with its singleton $\{a\}\in \CP(S_1)$.
Then $+^*$ and $-^*$ are commutative, associative and distributive. For
any $p_1,\cdots,p_k\in S_1$ and $k\ge 1$, we have
$$ |p_1+^* \cdots +^* p_k|\le 3.$$
We write $p_1 -^* p_2 $ for $p_1+^* (-^*p_2)$. These two notions are naturally assigned
to $\BR\cup\BQ^*$ and they are defined as follows:
\be
	\item
	\be
		\item If both $r_1$ and $r_2$ are in $\BR$ and $r=r_1+r_2$, then
		\[r_1+^* r_2:= \left \{
		\begin{array}{ll}
         \{r\} & \mbox{if $\{r\}\in \BR\setminus\BQ$}\\
         \{r\} & \mbox{if $\{r\}\in \BQ$ and $r_1,r_2\in \BQ$}\\
        \{r-\epsilon,\ r, r+\epsilon\} & \mbox{if $r\in \BQ$ and $r_1,r_2\notin \BQ$}
        \end{array}
        \right.
        \]
		\item If $r_1\in \BR\setminus \BQ$ and $r_2=q\pm\epsilon \in \BQ^*$, then $r_1+^* r_2:=\{r_1+q\}$;
		\item If $r_1\in \BQ$ and $r_2=q\pm\epsilon \in \BQ^*$, then $r_1+^* r_2:=\{(r_1+q)\pm\epsilon\}$;		
		\item If $r_1=p\pm\epsilon$ and $r_2=q\pm\epsilon\in \BQ^*$, then $r_1+^* r_2:=\{(p+q)\pm\epsilon\}$;
		\item If $r_1=p\pm\epsilon$ and $r_2=q\mp\epsilon\in \BQ^*$, then $r_1+^* r_2:=\{(p+q)-\epsilon,(p+q),(p+q)+\epsilon\}$.
	\ee
	
	\item
	\be
		\item If $r_1\in \BR$, then $-^*r_1:=-r_1$;
		\item If $r_1=p\pm\epsilon \in \BQ^*$, then $-^*r_1:=-p\mp\epsilon$.
	\ee
\ee
Now, we induce a group structure from $(S_1,+^*,-^*)$. Define an equivalence relation
$\equiv_0$ on $S_1$ by
$$p_1\equiv_0 p_2\ \mbox{iff}\ p_1-^* p_2 \subset \{0-\epsilon,\ 0,\ 0+\epsilon\},$$
and  denote by $[p]_0$ the equivalence class of an element $p\in S_1$ with respect to that
relation.
Since $\{0-\epsilon,\ 0,\ 0+\epsilon\}$ is closed under $+^*$ and $-^*$, $+^*$ and $-^*$
can be extended on $S_1/\equiv_0$. Then $(S_1/\equiv_0,+^*,-^*,[\tp(0)]_0 )$ is a group.
Actually, it is isomorphic to $(\BR,+,-,0)$.
\begin{theorem}
$(S_1/\equiv_0,+^*,-^*,[\tp(0)]_0)\cong (\BR,+,-,0)$.
\end{theorem}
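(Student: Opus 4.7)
The plan is to produce an explicit bijection $\Phi\colon S_1/\equiv_0 \to \BR$ and verify case by case that it intertwines $+^*$ with $+$ and $-^*$ with $-$. The main payoff of the Dedekind-style construction is that it has already forced the ambiguities of $+^*$ (the three-valued sums) to live inside the kernel of the quotient map, so the verification reduces to bookkeeping.

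First, I would identify the $\equiv_0$-classes explicitly. Using the clauses defining $-^*$, for $p\in S_1$ one computes $p-^*p\subseteq\{-\epsilon,0,+\epsilon\}$ in every case, so $\equiv_0$ is reflexive (symmetry and transitivity are then immediate from commutativity/associativity of $+^*$ on $\CP(S_1)$). A direct calculation in each of the five sub-cases of the table for $+^*$ shows that $p_1-^*p_2\subseteq\{-\epsilon,0,+\epsilon\}$ holds exactly in the following situations: $p_1=p_2=r$ for some $r\in\BR\setminus\BQ$; or $\{p_1,p_2\}\subseteq\{q-\epsilon,q,q+\epsilon\}$ for a single $q\in\BQ$. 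Hence the classes of $\equiv_0$ are precisely $\{r\}$ for $r\in\BR\setminus\BQ$ and $\{q-\epsilon,q,q+\epsilon\}$ for $q\in\BQ$, and we get a natural set-theoretic bijection $\Phi\colon S_1/\equiv_0 \to \BR$ sending $[r]_0\mapsto r$ and $[q\pm\epsilon]_0\mapsto q$, with $[\tp(0)]_0\mapsto 0$.

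Second, I would show that $+^*$ descends to a single-valued operation on the quotient and that $\Phi$ is a homomorphism. The key observation is that whenever the table for $+^*$ produces a three-element output, the output is always of the form $\{s-\epsilon,s,s+\epsilon\}$ for some $s\in\BQ$, which is one full $\equiv_0$-class; when it produces a single-element output $\{t\}$, $t$ is again the $\equiv_0$-representative of $\Phi(p_1)+\Phi(p_2)$. Running through the five clauses (a)--(e) of the definition of $+^*$, one checks in each case that $p_1+^*p_2\subseteq [\Phi(p_1)+\Phi(p_2)]_0$: for instance, in case (a) the sum is either $\{r\}$ with $r=r_1+r_2$ or the triple $\{r-\epsilon,r,r+\epsilon\}$, and in case (e) the sum $\{(p+q)-\epsilon,(p+q),(p+q)+\epsilon\}$ is exactly the class of the rational $p+q=\Phi(p_1)+\Phi(p_2)$. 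The analogous check for $-^*$ is even shorter since the two clauses both produce singletons.

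Third, from the previous step the induced operations on $S_1/\equiv_0$ are well defined and single-valued, and $\Phi$ carries them to ordinary $+$ and $-$ on $\BR$; it is a bijection by construction and sends $[\tp(0)]_0$ to $0$. Associativity, commutativity and the identity/inverse laws in $(S_1/\equiv_0,+^*,-^*,[\tp(0)]_0)$ are then inherited from $(\BR,+,-,0)$, giving the desired isomorphism. The only genuinely delicate step is the preceding one: one must verify that the ``multi-valuedness'' of $+^*$ never crosses an $\equiv_0$-boundary. This is where the choice of $\equiv_0$ is exactly tuned to the set $\{-\epsilon,0,\epsilon\}$ of possible ``widths'' of $p_1+^*p_2$, so the check, while it requires going through each case, cannot fail.
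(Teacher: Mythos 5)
Your proposal is correct, and since the paper states this theorem without proof (treating it as a routine verification left to the reader), the natural direct approach you take—identifying the $\equiv_0$-classes as $\{r\}$ for irrational $r$ and $\{q-\epsilon,q,q+\epsilon\}$ for rational $q$, building the obvious bijection $\Phi$ onto $\BR$, and checking case by case that $p_1+^*p_2\subseteq[\Phi(p_1)+\Phi(p_2)]_0$—is exactly the verification the authors have in mind. The key observations you make explicitly (that $\{-\epsilon,0,\epsilon\}$ is closed under $+^*$ and $-^*$, which the paper itself records, and that any three-valued output of $+^*$ is always a full $\equiv_0$-class) are precisely what guarantees the induced operation on the quotient is single-valued and well-defined, after which the group axioms transfer for free along $\Phi$.

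One small remark on exposition: your parenthetical claim that symmetry and transitivity of $\equiv_0$ are ``immediate from commutativity/associativity'' is slightly compressed. Symmetry additionally uses that $-^*$ commutes with $+^*$ in the sense that $-^*(p_1+^*p_2)=(-^*p_1)+^*(-^*p_2)$ together with closure of $\{-\epsilon,0,\epsilon\}$ under $-^*$, and transitivity uses the containment $p_1-^*p_3\subseteq (p_1-^*p_2)+^*(p_2-^*p_3)$ (which follows from $0\in p_2-^*p_2$) together with closure under $+^*$. These are all true and easy, but worth spelling out in a final write-up since they are the only places where the multi-valuedness of the operations could in principle cause trouble.
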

\noindent Define an equivalence relation $\equiv_{\BZ}$ on $S_1$ by
$$p_1\equiv_{\BZ} p_2\ \mbox{iff}\ p_1 -^* p_2 \subset \BZ^*,$$
and denote by $[p]_{\BZ}$ the corresponding equivalence class. As above, since $\BZ^*$ is closed under
$+^*$ and $-^*$, we can extend $+^*$ and $-^*$ to on $S_1/\equiv_{\BZ}$. Then
$(S_1/\equiv_{\BZ},+^*,-^*,[\tp(0)]_{\BZ} )$ is isomorphic to $(\BR/\BZ,+,-,0)$ as a group.
\begin{theorem}
$(S_1/\equiv_{\BZ},+^*,-^*,[0]_{\BZ} )\cong (\BR/\BZ,+,-,0)$.
\end{theorem}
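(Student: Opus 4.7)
The plan is to leverage the preceding theorem, which identifies $(S_1/\equiv_0,+^*,-^*,[0]_0)$ with $(\BR,+,-,0)$ via some isomorphism $\psi$, and to realize the map $S_1/\equiv_{\BZ}\to\BR/\BZ$ as a further quotient.

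First, I would verify that $\equiv_{\BZ}$ is coarser than $\equiv_0$, which is immediate from the inclusion $\{0-\epsilon,0,0+\epsilon\}\subseteq\BZ^*$: if $p_1\equiv_0 p_2$, then $p_1-^*p_2\subseteq\{0-\epsilon,0,0+\epsilon\}\subseteq \BZ^*$, so $p_1\equiv_{\BZ} p_2$. This yields a canonical surjective homomorphism $\pi:S_1/\equiv_0\twoheadrightarrow S_1/\equiv_{\BZ}$ which is a homomorphism with respect to $+^*$ and $-^*$ (these operations descend because $\BZ^*$ is closed under them, as already observed in the text preceding the theorem). Composing with $\psi^{-1}$ gives a surjective homomorphism $\Psi:=\pi\circ\psi^{-1}:\BR\twoheadrightarrow S_1/\equiv_{\BZ}$.

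Next, I would compute the kernel of $\Psi$ to conclude by the first isomorphism theorem. Given $r\in\BR$, the fiber $\psi^{-1}(r)$ consists of $\{r\}$ when $r\in\BR\setminus\BQ$ and of $\{r-\epsilon,r,r+\epsilon\}$ when $r\in\BQ$. Since $\pi$ is well-defined, $\Psi(r)=[0]_{\BZ}$ iff some (equivalently, every) representative $p\in\psi^{-1}(r)$ satisfies $p-^*0=\{p\}\subseteq\BZ^*$, i.e.\ $p\in\BZ^*$. Case analysis on the above forms for $\psi^{-1}(r)$ shows this happens precisely when $r\in\BZ$: for $r\in\BZ$ we may take $p=r\in\BZ\subseteq\BZ^*$; and for $r\notin\BZ$, none of the candidates $r$, $r\pm\epsilon$ lie in $\BZ^*$ (whether or not $r$ is rational). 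Hence $\ker\Psi=\BZ$, and $\Psi$ induces the desired group isomorphism $\BR/\BZ\cong S_1/\equiv_{\BZ}$.

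There is not really a hard step here; the main thing to be careful about is the well-definedness of $\pi$ with respect to the multivalued operations $+^*$ and $-^*$, and the verification that passing through representatives in $\psi^{-1}(r)$ is independent of the chosen representative when computing whether an element of $\psi^{-1}(r)$ sits in $\BZ^*$. Both reduce to the closure property $\BZ^*+^*\{0-\epsilon,0,0+\epsilon\}\subseteq\BZ^*$, which is a direct check from the explicit description of $+^*$ preceding the theorem.
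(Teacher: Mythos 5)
Your proof is correct; the paper states both Appendix~B theorems without proof, so there is no given argument to compare against. The route you take---noting that $\equiv_0$ refines $\equiv_{\BZ}$ because $\{0-\epsilon,0,0+\epsilon\}\subseteq\BZ^*$, transporting across the isomorphism $\psi$ from the preceding theorem, computing $\ker\Psi=\BZ$ by checking which fibres $\psi^{-1}(r)$ lie in $\BZ^*$, and invoking the first isomorphism theorem---is the natural one and every step checks out against the explicit description of $+^*$, $-^*$ given in the text.
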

\noindent The equivalences $\equiv_0$ and $\equiv_{\BZ}$ are defined on $\BR\cup \BQ^*,$
$$(\BR\cup \BQ^*)/\equiv_0 \cong \BR\ \mbox{and}\ (\BR\cup \BQ^*)/\equiv_{\BZ} \cong \BR/\BZ.$$

\end{document}